\theoremstyle{plain}
\newtheorem{thm}{Theorem}[section]
\newtheorem{lemma}[thm]{Lemma}
\newtheorem{prop}[thm]{Proposition}
\theoremstyle{definition}
\newtheorem{defn}[thm]{Definition}
\newtheorem{defin}[thm]{Definition}
\newtheorem*{ack}{Acknowledgements}
\newcommand{\bSigma}{\mathbf{\Sigma}}
\newcommand{\hgt}{\mathrm{ht}}
\newcommand{\ZFC}{\textrm{ZFC}}
\newcommand{\MA}{\textrm{MA}}
\newcommand{\dom}{\textrm{dom}}
\newcommand{\pol}{\mathrm{pol}}
\newcommand{\CB}{\mathrm{CB}}
\newcommand{\Nrm}{\textrm{Nrm}}
\renewcommand{\P}{\mathbb{P}}%
\providecommand{\Q}{\mathbb{Q}}%
\providecommand{\bdd}{{\rm bdd}}
\providecommand{\Par}{{\mathfrak{par}}}
\providecommand{\poly}{{\rm poly}}
\providecommand{\Hom}{\mathfrak{hom}}
\begin{document}

\author[Palumbo]{Justin Palumbo}
\address{Justin Palumbo, Department of Mathematics,
                    University of California at Los Angeles,
                    Los Angeles, California}
\email{justinpa@math.ucla.edu}


\title{Comparisons of polychromatic and monochromatic Ramsey theory}

\begin{abstract}
We compare the strength of polychromatic and monochromatic Ramsey theory in several set-theoretic domains. We show that the rainbow Ramsey theorem does not follow from ZF, nor does the rainbow Ramsey theorem imply Ramsey's theorem over ZF. Extending the classical result of Erd{\"o}s and Rado we show that the axiom of choice precludes the natural infinite exponent partition relations for polychromatic Ramsey theory. We introduce rainbow Ramsey ultrafilters, a polychromatic analogue of the usual Ramsey ultrafilters. We investigate the relationship of rainbow Ramsey ultrafilters with various special classes of ultrafilters, showing for example that every rainbow Ramsey ultrafilter is nowhere dense but rainbow Ramsey ultrafilters need not be rapid. This entails comparison of the polychromatic and monochromatic Ramsey theorems as combinatorial principles on $\omega$. Finally we give new characterizations of the bounding and dominating numbers and the covering and uniformity numbers of the meager ideal which are in the spirit of polychromatic Ramsey theory.
\end{abstract}

\maketitle

\section{Introduction}

In this article we investigate the relative strengths of monochromatic and polychromatic Ramsey theory in a variety of settings. Recall that in the usual monochromatic Ramsey theory one is given a coloring $\chi:[X]^n\rightarrow C$ and seeks a set $Y\subseteq X$ which is {\em monochromatic} for $\chi$. This means that there is a single color which all elements of $[Y]^n$ receive. In the polychromatic Ramsey theory we instead seek a set $Y\subseteq X$ which is {\em polychromatic} for $\chi$. This means that each member of $[Y]^n$ receives a different color. (Polychromatic Ramsey theory also goes by the name rainbow Ramsey theory; a polychromatic set might be called a rainbow).

In order to be able to find monochromatic or polychromatic sets we must put some restriction on the colorings under consideration. In monochromatic Ramsey theory the appropriate restriction is to insist that the set of colors be finite; in the polychromatic theory we insist that each color gets used a bounded, finite number of times. For $k\in\omega$ we will say that the coloring $\chi:[X]^n\rightarrow C$ is $k$-bounded if $|\chi^{-1}[c]|\leq k$ for each $c\in C$.

The following trick due to Fred Galvin shows that whenever positive results in the monochromatic theory hold so too will their polychromatic analogue. Suppose we are given $\chi:[X]^n\rightarrow C$ a $k$-bounded coloring. For each $c\in C$ fix an enumeration of $\chi^{-1}[c]$, and form the dual coloring $\chi^*:[X]^n\rightarrow k$ by letting $\chi(a)=i$ exactly when $a$ is the $i$th element in the enumeration of its color class. It is easy to see that $Y\subseteq X$ is polychromatic for $\chi$ whenever $Y$ is monochromatic for $\chi^*$.

Several situations in which the polychromatic theory is strictly weaker than the monochromatic theory are already well-known. In the finite setting it has been shown that the classical Ramsey number $R_n$ grows much more quickly than its polychromatic counterpart (\cite{Hell}, \cite{HellAgain}). In the context of reverse mathematics, Csima and Mileti showed \cite{Csima} that the rainbow Ramsey theorem does not imply Ramsey's theorem over $\textrm{RCA}_0$ even though $\textrm{RCA}_0$ is sufficient to prove that Ramsey's theorem implies the rainbow Ramsey theorem. Sierpinksi showed  (in ZFC) that there are $2$-colorings of $[\omega_1]^2$ with no monochromatic subset of size $\omega_1$, yet Todor{\v{c}}evi{\'c} \cite{Tod} and Abraham, Cummings and Smyth \cite{ACS} independently showed that under PFA one may always find polychromatic subsets of size $\omega_1$ for $2$-bounded colorings on $[\omega_1]^2$.

Our contributions are the following. In section \ref{RRTnAC} we view the rainbow Ramsey theorem as a choice principle. We will prove that some choice is needed to prove the rainbow Ramsey theorem, and that there are models of ZF where the rainbow Ramsey theorem holds yet Ramsey's theorem fails. In section \ref{RRTnIER} we will show that the axiom of choice forbids infinite exponent partition relations for the polychromatic Ramsey theory just as it does for the monochromatic theory.

In section \ref{RRTnUF} we investigate the (countable) combinatorial power of the rainbow Ramsey theorem; to accomplish this we introduce rainbow Ramsey ultrafilters, a polychromatic analogue of the classical Ramsey ultrafilters. Every Ramsey ultrafilter is a rainbow Ramsey ultrafilter, yet consistently there are rainbow Ramsey ultrafilters which are not Ramsey. Thus in the context of ultrafilters the polychromatic theory is weaker. We will investigate the relationship between rainbow Ramsey ultrafilters and other well-known types of special ultrafilters which encapsulate various combinatorial principles on $\omega$. Ramsey's theorem is sufficiently strong as a combinatorial to principle so that any Ramsey ultrafilter falls into one of these special types; this is not so for the rainbow Ramsey theorem.

Constructing ultrafilters which are rainbow Ramsey but fail to have some other property requires building polychromatic sets with various special properties, properties for which one cannot generally find monochromatic sets. For example, we will (assuming $\MA$) construct a rainbow Ramsey ultrafilter which is not rapid. To do this we must be able to build polychromatic sets whose enumerating functions do not grow too fast; the monochromatic theory is strong enough to enforce fast growth while the polychromatic theory is not. In a similar vein we will construct a rainbow Ramsey ultrafilter which is not discrete. This requires building polychromatic subsets of $\Q$ which have high Cantor-Bendixson rank, something not in general possible in the monochromatic theory. However we will prove that every rainbow Ramsey ultrafilter is nowhere dense. One aspect of the proof involves showing that there are $2$-bounded colorings for which polychromatic sets are necessarily nowhere dense. We also show that there may exist weakly selective ultrafilters which are not rainbow Ramsey.

Finally, in section \ref{RRTnCC} we give several cardinal characteristics of the continuum new characterizations in the spirit of polychromatic Ramsey theory.

\begin{ack}
The author would like to thank Itay Neeman for many helpful discussions, and for helping to simplify the exposition of the material in Section \ref{discreteUlt}.
\end{ack}

\section{Polychromatic Ramsey theory and the axiom of choice} \label{RRTnAC}

In this section we investigate polychromatic Ramsey theory in the absence of the axiom of choice. The standard reference for the basics of building models for the failure of choice is the text \cite{Je}, whose notation and terminology we follow closely. 

The original result in Ramsey theory may be stated as follows. 
 
\begin{thm}[Ramsey's Theorem]\label{RamseysThm}
Let $X$ be an infinite set and let $\chi:[X]^2\rightarrow 2$. Then there is an infinite $Y\subseteq X$ which is monochromatic for $\chi$.
\end{thm}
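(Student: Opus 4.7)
The plan is to prove Ramsey's theorem by the classical recursive construction that produces a decreasing chain of infinite subsets of $X$ together with a distinguished sequence of elements whose "outgoing" color has been stabilized. Let me sketch the steps.

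First I would fix $x_0 \in X$ arbitrarily and let $X_0 = X \setminus \{x_0\}$. Since $\chi(\{x_0, y\}) \in \{0,1\}$ for each $y \in X_0$ and $X_0$ is infinite, the pigeonhole principle yields a color $c_0 \in 2$ such that $Y_0 = \{y \in X_0 : \chi(\{x_0, y\}) = c_0\}$ is infinite. I then recurse: having chosen $x_n \in Y_{n-1}$ and an infinite $Y_{n-1}$, pick $x_{n+1} \in Y_{n-1} \setminus \{x_n\}$, partition $Y_{n-1} \setminus \{x_n, x_{n+1}\}$ according to the $\chi$-value of pairs with $x_n$, and let $Y_n$ be an infinite piece with associated color $c_n$. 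This produces an infinite sequence $x_0, x_1, \ldots$ of distinct elements of $X$ and a sequence of colors $(c_n) \in 2^\omega$ with the key property that $\chi(\{x_m, x_n\}) = c_m$ whenever $m < n$.

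Second, I would apply pigeonhole one more time: since $(c_n)$ takes only two values, there is some $c \in 2$ such that $I = \{n \in \omega : c_n = c\}$ is infinite. Set $Y = \{x_n : n \in I\}$. For any two elements $x_m < x_n$ of $Y$ (with $m, n \in I$ and $m < n$), we have $\chi(\{x_m, x_n\}) = c_m = c$. Hence $Y$ is an infinite monochromatic subset of $X$ with color $c$.

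The main subtlety, and the one place where the argument is not purely mechanical, is the recursion itself: the construction of the sequence $(Y_n)$ implicitly uses countably many choices — at each stage we select an infinite subset and a distinguished element — so I would be careful to note that this is a use of the principle of dependent choices ($\DC$) rather than pure $\ZF$. Since the paper's subsequent sections concern precisely how much choice Ramsey-theoretic statements require, flagging this dependence is worthwhile even though the theorem itself is being stated as background. The only genuinely combinatorial input is the pigeonhole step, which works because the range $2$ is finite; the same proof gives Ramsey's theorem for any finite number of colors and, by induction on the exponent, for $[X]^n$ with $n \geq 2$.
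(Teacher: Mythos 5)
The paper does not actually prove this theorem; it is stated as classical background, with only the remark that the proof uses the fact that every infinite set has a countably infinite subset. Your argument is the standard one and is essentially right in outline, but there is an indexing slip in the recursion that, taken literally, breaks the key property. You pick $x_{n+1}$ from $Y_{n-1}\setminus\{x_n\}$ and only afterwards define $Y_n$ as an infinite color class inside $Y_{n-1}\setminus\{x_n,x_{n+1}\}$; consequently $x_{n+1}\notin Y_n$, so nothing controls $\chi(\{x_n,x_{n+1}\})$, and the claimed property $\chi(\{x_m,x_n\})=c_m$ for $m<n$ fails when $n=m+1$. Since the infinite set $I$ produced by the second pigeonhole application may well contain consecutive integers, the final set $Y$ need not be monochromatic as written. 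The fix is the usual ordering of the steps: given $x_n\in Y_{n-1}$, first partition $Y_{n-1}\setminus\{x_n\}$ by the value of $\chi(\{x_n,\cdot\})$, let $Y_n$ be an infinite piece with associated color $c_n$, and only then choose $x_{n+1}\in Y_n$. Then $x_n\in Y_m$ for every $m<n$, the key property holds for all pairs, and the rest of your argument goes through.

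Your closing remark about choice is appropriate and consistent with the paper's framing, though the accounting can be sharpened: on a countable or well-ordered set the recursion can be carried out with canonical choices (least available element, preferred color when both classes are infinite), so the genuine choice content is the extraction of a countably infinite subset of the arbitrary infinite set $X$ --- exactly the fact the paper cites. $\DC$ certainly suffices, and flagging the dependence is worthwhile given that the next section shows some choice is unavoidable.
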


Applying Galvin's trick one gets the following polychromatic analogue.

\begin{thm}[The rainbow Ramsey theorem]\label{RainbowRamseyThm}
Let $X$ be an infinite set and let $\chi:[X]^2\rightarrow C$ be a 2-bounded coloring. Then there is an infinite $Y\subseteq X$ which is polychromatic for $\chi$.
\end{thm}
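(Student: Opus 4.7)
The plan is to execute Galvin's trick, already described in the introduction, which reduces the rainbow Ramsey theorem directly to Ramsey's Theorem (Theorem \ref{RamseysThm}). Given the 2-bounded coloring $\chi:[X]^2\to C$, the first step is to pick, for each $c$ in the range of $\chi$, an enumeration of the at-most-two-element set $\chi^{-1}[c]$, and then to form the dual coloring $\chi^*:[X]^2\to 2$ by setting $\chi^*(a)=i$ exactly when $a$ is the $i$-th element in the fixed enumeration of its color class.

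The second step is to apply Ramsey's theorem to $\chi^*$, producing an infinite $Y\subseteq X$ monochromatic for $\chi^*$ with some common value $i\in\{0,1\}$. Finally, verify polychromaticity of $Y$ under $\chi$: if distinct $a,b\in [Y]^2$ satisfied $\chi(a)=\chi(b)=c$, then both lie in $\chi^{-1}[c]$, and each is the $i$-th element of the fixed enumeration of this set of size at most two, forcing $a=b$ and yielding a contradiction.

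There is no real mathematical obstacle here — the argument is only a few lines once Galvin's trick is in hand. The one subtlety worth flagging is that choosing enumerations of all the color classes $\chi^{-1}[c]$ simultaneously invokes the axiom of choice applied to a family of sets of size at most two. This use of choice is precisely the motivation for Section \ref{RRTnAC}, where the question of which fragments of \AC{} actually suffice for Theorem \ref{RainbowRamseyThm} is taken up seriously. Within the ZFC context of the present statement, no further care is required.
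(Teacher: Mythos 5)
Your proof is correct and is exactly the paper's own argument: the paper obtains Theorem \ref{RainbowRamseyThm} by applying Galvin's trick (as set out in the introduction) to Ramsey's Theorem \ref{RamseysThm}, which is precisely your dual-coloring reduction, and your verification that a $\chi^*$-monochromatic set is $\chi$-polychromatic is the intended one. Your remark about needing a choice function on the two-element color classes also matches the paper's own observation that Galvin's trick requires choice on sets of pairs, which motivates Section \ref{RRTnAC}.
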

 
The proof of Ramsey's theorem as stated here uses the fact that every infinite set has a countably infinite subset, while Galvin's trick requires the existence of a choice function on sets of pairs. Kleinberg \cite{Kl} proved that some amount of choice is necessary to prove Ramsey's theorem. We begin this section by observing that this is also true of the rainbow Ramsey theorem. 

\begin{thm}
There is a model of ZF in which the rainbow Ramsey theorem does not hold.
\end{thm}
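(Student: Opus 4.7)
The plan is to build a Fraenkel--Mostowski permutation model housing an infinite set $X$ equipped with a 2-bounded coloring on $[X]^2$ that admits no infinite polychromatic subset, and then transfer the failure to a model of ZF by the Jech--Sochor embedding theorem.

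First I would take the atoms to form $X = \bigcup_{n \in \omega} Q_n$, a disjoint union of 4-element blocks. Let $V_n$ act on $Q_n$ as the Klein four-group $\{e, (12)(34), (13)(24), (14)(23)\} \subseteq S_{Q_n}$, set $G = \prod_n V_n$, and use the normal filter of finite supports. The virtue of the Klein four-group is twofold: $V_n$ is transitive on $Q_n$ (so no element of a block is individually definable), yet $V_n$ setwise stabilizes each of the three perfect matchings of $Q_n$. The key structural consequence is that every $Y \subseteq X$ in the symmetric model has the form $Y = F \cup \bigcup_{n \in N} Q_n$ for some finite $F$ and some $N \subseteq \omega$: if $E$ is a finite support for $Y$ and $Q_n \cap E = \emptyset$, then $V_n$ fixes $E$ pointwise while acting transitively on $Q_n$, forcing $Y \cap Q_n \in \{\emptyset, Q_n\}$. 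In particular every infinite $Y$ contains an entire block $Q_n$.

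Next I would define $\chi \colon [X]^2 \to C$ by assigning each within-block pair $e \in [Q_n]^2$ the perfect matching of $Q_n$ containing it, and each cross-block pair $e$ the color $e$ itself. Since the three matchings of $Q_n$ are precisely the $V_n$-orbits on $[Q_n]^2$, each of size two, and cross-block colors are all distinct, $\chi$ is 2-bounded; equivariance under $G$ shows that $\chi$ has empty support. Any infinite $Y$ in the model then contains some full block $Q_n$, so $[Y]^2$ contains both pairs of a matching, exhibiting a repeated color and showing $Y$ is not polychromatic. The resulting statement --- existence of an infinite set carrying a 2-bounded coloring with no infinite rainbow --- is first-order and its witnesses are of bounded rank, so the Jech--Sochor embedding theorem transfers it to a model of ZF.

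The main obstacle is calibrating $G$ to simultaneously destroy selectors (forcing every infinite subset to contain whole blocks) and permit a finitely supported 2-bounded coloring with genuine color repetitions. The Klein four-group threads this needle precisely because of its transitivity on $Q_n$ combined with its stabilization of the three perfect matchings; smaller groups would admit definable selectors into the blocks, while larger groups such as $A_4$ or $S_4$ act transitively on $[Q_n]^2$ and preclude a finitely supported coloring with nontrivial color classes.
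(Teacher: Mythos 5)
Your proof is correct and follows essentially the same route as the paper: a Fraenkel--Mostowski model whose atoms are partitioned into finite blocks with the finite-support filter, so that every infinite symmetric subset swallows whole blocks, together with an invariant $2$-bounded coloring whose color repetitions are concentrated on those blocks, transferred to ZF via Jech--Sochor. The only difference is cosmetic: the paper uses the standard second Fraenkel model ($2$-element blocks, with the repeated colors arising from cross-block pairs $\{a_i,b_j\}$, $\{b_i,a_j\}$, so two full blocks are needed), whereas you build $4$-element blocks with the Klein four-group and get the repetition from the two edges of a perfect matching inside a single block.
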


\begin{proof}
We use the permutation model $M$ referred to in \cite{Je} as the second Fraenkel model. While technically speaking permutation models only yield independence results for ZFA (set theory with atoms), the Jech-Sochor theorem (Theorem 6.1 of \cite{Je}) can be applied to yield the ZF result.

Recall that the model $M$ is obtained as follows. Let $A=\bigcup P_n$ where $P_n=\{a_n,b_n\}$, and $\mathcal{G}$ is the group of all permutations $\pi$ of $A$ such that $\pi(\{a_n,b_n\})=\{a_n,b_n\}$. We obtain $M$ using $\mathcal{G}$ and the ideal $\mathcal{I}$ of finite supports.

Let $\chi$ in $M$ be a 2-bounded coloring of $[A]^2$ which gives the pair $\{a_i,b_j\}$ the same color as $\{b_i,a_j\}$, and the pair $\{a_i,a_j\}$ the same color as the pair $\{b_i,b_j\}$. Specifically we may take $\chi$ to be defined by $\chi(\{a_i,b_j\})=\{\{a_i,b_j\},\{a_j,b_i\}\})$ and $\chi(\{a_i,a_j\})=\chi(\{b_i,b_j\})=\{\{a_i,a_j\},\{b_i,b_j\}\}$. It is not hard to see that $\chi$ is invariant under permutations $\pi\in\mathcal{G}$ and hence that $\chi$ belongs to $M$.

There is no infinite set in $M$ which is polychromatic for $\chi$. This is because any infinite $B\subseteq A$ belonging to $M$ must contain infinitely many pairs $\{a_i,b_i\}$.
\end{proof}

Let $N$ be the basic Cohen model of the failure of the axiom of choice, as described in section 5.3 of \cite{Je}. In that model the Boolean prime ideal theorem holds, every set can be linearly ordered and every collection of well-ordered sets has a choice function. Blass \cite{Bl2} proved that Ramsey's theorem fails in $N$. Thus our next result shows that the rainbow Ramsey theorem is considerably weaker than Ramsey's theorem as a choice principle. Our argument is very much inspired by Blass's argument in \cite{Bl2} that the basic Fraenkel model satisfies Ramsey's theorem.

\begin{thm}\label{RRTdontimplyRT}
The rainbow Ramsey theorem holds in $N$.
\end{thm}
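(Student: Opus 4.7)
The plan is to adapt Blass's argument that Ramsey's theorem holds in the basic Fraenkel model, as the author signals, but to track pairs and exploit $2$-boundedness in place of finiteness of the color set. So let $\chi \in N$ be a $2$-bounded coloring of $[X]^2$ with $X$ infinite, and fix a finite support $E$ for both $X$ and $\chi$ in the symmetric system defining $N$, so that every permutation $\pi$ pointwise fixing $E$ satisfies $\pi(X) = X$ and $\pi(\chi(\{u,v\})) = \chi(\{\pi(u),\pi(v)\})$.

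I would first dispose of the case in which $X$ has an infinite well-orderable subset $Y$ in $N$. There, the color classes of $\chi \restriction [Y]^2$ form a well-orderable family, so Galvin's trick applies inside $N$, yielding a dual $2$-coloring $\chi^* \colon [Y]^2 \to 2$. Since Ramsey's theorem for a well-orderable infinite set is provable in $\ZF$, one extracts an infinite set monochromatic for $\chi^*$, which is polychromatic for $\chi$.

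The substantive case is when $X$ has no infinite well-orderable subset. Here I would assign to each $x \in X$ a finite support $E_x \supseteq E$ and, by pigeonhole on the finite invariants of $E_x \setminus E$, pass to an infinite symmetric subset $X' \subseteq X$ on which these extended supports behave uniformly---say pairwise disjoint and of common cardinality $k$. The key symmetry input is that whenever $\pi$ fixes $E$ pointwise and carries one pair of $X'$ to another, $\chi$ sends these pairs to $\pi$-related colors; combined with $2$-boundedness, this forces the orbit of any given pair under the stabilizer of its color to contain at most two pairs, so almost all of the translates $\pi \cdot \{x,y\}$ for $\pi$ fixing $E$ receive distinct colors. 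I would then extract, in a fashion invariant under the stabilizer of $E$, an infinite polychromatic $Y \subseteq X'$ whose elements have pairwise disjoint extended supports.

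The main obstacle is the final step: making the symmetry-plus-pigeonhole construction produce a $Y$ that is itself hereditarily symmetric with finite support, and hence lies in $N$, while confirming that $2$-boundedness is precisely the hypothesis that makes this work. This is also the point that illuminates why Ramsey's theorem must fail in $N$: requiring a single color across infinitely many symmetry orbits clashes with the finite-support structure, whereas the $2$-bounded polychromatic demand is compatible with---indeed, largely forced by---that same structure.
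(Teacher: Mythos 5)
Your overall architecture (split on well-orderability, then exploit $2$-boundedness via the symmetries of $N$) matches the paper's, and your treatment of the well-orderable case is fine. But the heart of the hard case is missing, and the obstacle you flag in your last paragraph is exactly the point where the real work has to happen. The paper does not run a pigeonhole-on-supports argument directly on an arbitrary non-well-orderable $X$. Instead it proves two separate lemmas. First, for infinite subsets $Y$ of the canonical set $A=\{x_n:n\in\omega\}$ of Cohen reals, it shows that $Y\setminus\{x_n:n\in e\}$ is \emph{already} polychromatic, where $e$ is a finite support for the name $\dot\chi$: if two distinct pairs received the same color, one applies a permutation $\pi$ fixing $e$ and moving one index to a fresh index outside $\dom(p)$, notes that $p$ and $\pi(p)$ are compatible, and forces a third pair to receive that same color, contradicting $2$-boundedness. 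This forcing step --- compatibility of $p$ and $\pi(p)$ plus genericity --- is what your ``$\pi$-related colors'' paragraph is implicitly relying on, and it only runs cleanly because the elements in play are the canonical $x_i$, on which permutations of $\omega$ act transparently.

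Second, and this is the ingredient your proposal lacks, the paper proves that every non-well-orderable $B\in N$ admits an injection, lying in $N$, from an infinite subset of $A$ into $B$; the coloring is then pulled back to that subset of $A$ and the first lemma is applied. This is where the theory of \emph{least} supports enters: one finds $x\in B$ whose least support exceeds the least support of $B$ by some $x_k$, and takes the orbit name $\sigma=\{\langle\langle\pi(\dot x_k),\pi(\dot x)\rangle,\pi(p)\rangle:\pi\textrm{ fixes }e\}$; injectivity of the resulting function needs the support-intersection lemma (Lemma 5.23 of Jech) to contradict minimality of the support of $x$. Your plan to thin $X$ to an infinite $X'$ with pairwise disjoint supports of constant size, and then to extract an infinite polychromatic $Y$ ``in a fashion invariant under the stabilizer of $E$,'' has no mechanism for showing that such a $Y$ (or even $X'$) is hereditarily symmetric and hence in $N$: an arbitrary infinite subset of an orbit will generally fail to have finite support, and that is precisely why $A$ itself has no infinite well-orderable subset in $N$. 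The injection into $A$ is the device that converts the orbit into an object you can manipulate inside $N$; without it or a substitute, your case 2 does not close.
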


As in \cite{Je} we take $A=\{x_n:n\in\omega\}$ to be the canonical set of Cohen reals in $N$. We start by showing that the rainbow Ramsey theorem holds on $A$.

\begin{lemma}
Say $Y\subseteq A$ is infinite, $Y\in N$. If $\chi:[Y]^2\rightarrow C$ is a two-bounded coloring in $N$, then there is in $N$ an infinite set $X\subseteq Y$ on which $\chi$ is polychromatic.
\end{lemma}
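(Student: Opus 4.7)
The plan is to reduce the statement to the case $Y = A$ and then run a support-and-symmetry argument there. Let $E \subseteq \omega$ be a finite set which is simultaneously a support for the hereditarily symmetric names $\dot Y$ and $\dot\chi$. Define an auxiliary coloring $\tilde\chi$ of $[A]^2$, in $N$, by letting $\tilde\chi(p) = \chi(p)$ if $p \in [Y]^2$ and $\tilde\chi(p) = p$ otherwise (using a disjoint union convention so the two color classes stay apart). Then $\tilde\chi$ is supported by $E$ and remains 2-bounded, since each new color is used exactly once and each old color $c \in C$ has $\tilde\chi^{-1}(c) = \chi^{-1}(c)$. If I succeed in showing $\tilde\chi$ is injective on $[A \setminus E]^2$, then its restriction to $[Y \setminus E]^2$ is also injective, so $X := Y \setminus E$---an infinite set in $N$---is polychromatic for $\chi$.

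I therefore concentrate on the case $Y = A$, with $\chi : [A]^2 \to C$ 2-bounded in $N$ and support $E$, and will show $\chi$ is injective on $[A \setminus E]^2$. Fix $p = \{a, b\} \in [A \setminus E]^2$ and set $c = \chi(p)$; since $c$ is definable from $\chi$ and $p$, any support of $c$ is contained in $E \cup \{a, b\}$. The heart of the argument is the sub-claim that a support of $c$ must contain both $a$ and $b$. Suppose otherwise---say $b$ lies outside some support of $c$, which must then be contained in $E \cup \{a\}$. Then for any permutation $\pi$ of $\omega$ fixing the indices of $E \cup \{a\}$ pointwise we have $\pi \cdot \dot\chi = \dot\chi$ and $\pi \cdot \dot c = \dot c$. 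Applying the forcing symmetry identity $\pi(\dot\chi(\dot p)) = (\pi\dot\chi)(\pi\dot p)$, we obtain $\dot c = \dot\chi(\pi \dot p)$; interpreted in $V[G]$, $\chi(\{a, \pi(b)\}) = c$. Since $\pi(b)$ can be taken to be any element of $A \setminus (E \cup \{a\})$ (via suitable transpositions), this exhibits infinitely many pairs colored $c$, contradicting 2-boundedness. By symmetry, $a$ also lies in every support of $c$.

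To finish, suppose $c = \chi(p) = \chi(p')$ for distinct pairs $p = \{a, b\}$ and $p' = \{a', b'\}$ in $[A \setminus E]^2$. The sub-claim forces $\{a, b\} \cup \{a', b'\}$ into a common support of $c$, while that support is contained in each of $E \cup \{a, b\}$ and $E \cup \{a', b'\}$. Since none of $a, b, a', b'$ lies in $E$, this forces $\{a, b\} = \{a', b'\}$, a contradiction. Hence $\chi$ is injective on $[A \setminus E]^2$, and reversing the reduction yields the lemma.

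The point demanding most care is the interpretation step in the sub-claim: one must justify passing from the name-level identity $\dot c = \dot\chi(\pi \dot p)$ to a genuine fact about $\chi$ evaluated in $V[G]$. This is also precisely why the reduction to $Y = A$ is essential---attempting to run the symmetry argument directly inside $[Y]^2$ would force $\pi(b) \in Y$ for infinitely many $\pi$, a conclusion that can easily fail for infinite symmetric $Y \subsetneq A$ (such as $\{x_n : x_n(0) = 1\}$). Enlarging the domain to $[A]^2$ via $\tilde\chi$ evades the difficulty since $\pi(b) \in A$ holds automatically.
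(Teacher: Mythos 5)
There is a genuine gap, and it sits exactly where you say the most care is needed: the passage from the name-level identity to the $V[G]$-statement $\chi(\{a,\pi(b)\})=c$. Applying a permutation $\pi$ to names transforms \emph{forced} statements via the symmetry lemma: if $q\in G$ forces $\dot\chi(\dot p)=\dot c$, then $\pi(q)$ forces $\dot\chi(\pi\dot p)=\pi\dot c=\dot c$. But $\pi(q)$ need not belong to $G$, so for an arbitrary $\pi$ fixing $E\cup\{a\}$ pointwise you cannot conclude anything about the actual coloring in $V[G]$; the assertion that $\chi(\{a,\pi(b)\})=c$ holds in $V[G]$ for every such $\pi$ is unjustified, and the ``infinitely many pairs colored $c$'' contradiction never materializes. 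The repair is to extract the contradiction at the level of conditions, which is what the paper does: take $q$ forcing both $2$-boundedness and $\dot\chi(\dot p)=\dot c$, choose $\pi$ fixing every column in $\dom(q)$ except that of $b$ and sending that column to a fresh $k\notin\dom(q)$; then $q$ and $\pi(q)$ are compatible, and a common extension forces two (or, iterating, three) distinct pairs to receive the same color while also forcing $2$-boundedness. Nothing is ever re-interpreted in $V[G]$.

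Once the argument is run at the level of conditions, most of your scaffolding becomes unnecessary. The reduction to $Y=A$ via $\tilde\chi$ is harmless but not needed: the paper works directly with $[Y]^2$, and the obstacle you raise --- that $\pi(b)$ might escape $Y$ --- does not arise, because $e$ supports $\dot Y$ as well, so $\pi(q)$ forces $\dot x_k\in\dot Y$ whenever $q$ forces $\dot x_{i_1}\in\dot Y$ (your example $\{x_n:x_n(0)=1\}$ is symmetric with empty support, and the fresh real is simply forced into it). Two smaller points: your sub-claim should speak of the \emph{least} support of $c$ (an arbitrary support can be enlarged, so ``any support of $c$ is contained in $E\cup\{a,b\}$'' is not literally what you want, and the final step comparing the supports coming from $p$ and $p'$ really uses minimality); and the case analysis should distinguish pairs sharing a point from disjoint pairs, as the paper does, since the permutation must be chosen to move an index belonging to only one of the two pairs.
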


\begin{proof}
Let $\dot{\chi}$, $\dot{Y}$ be hereditarily symmetric names for the corresponding objects. There is a finite $e\subseteq\omega$ such that $\textrm{fix}(e)\subseteq\textrm{sym}(\dot{\chi})$. We claim that $Y\setminus\{x_n:n\in e\}$ is polychromatic.

Suppose otherwise for contradiction. There are two very similar cases to consider; we will derive a contradiction from the situation where for some distinct $i_0,i_1,i_2$ not in $e$ we have $\chi(\{x_{i_0},x_{i_2}\})=\chi(\{x_{i_1},x_{i_2}\})$. Let $p\in\mathbb{P}$ forcing $\dot{\chi}$ to be $2$-bounded and with $$p\Vdash\dot{\chi}(\{\dot{x}_{i_0},\dot{x}_{i_2}\})=\dot{\chi}(\{\dot{x}_{i_1},\dot{x}_{i_2}\}).$$ We may assume without loss of generality that $e\cup\{i_0,i_1,i_2\}\subseteq\dom(p)$. Let $\pi$ be a permutation which fixes each member of $e$ as well as $i_0$ and $i_2$, and for which $\pi(i_1)=k$ where $k$ is not in the domain of $p$. Then $p,\pi(p)$ are compatible, and $\pi(p)\Vdash\dot{\chi}(\{\dot{x}_{i_0},\dot{x}_{i_2}\})=\dot{\chi}(\{\dot{x}_k,\dot{x}_{i_2}\})$. But then if $q$ is a common strengthening of $p$ and $\pi(p)$ we have that $$q\Vdash \dot{\chi}(\{\dot{x}_{i_0},\dot{x}_{i_2}\})=\dot{\chi}(\{\dot{x}_k,\dot{x}_{i_2}\})=\dot{\chi}(\{\dot{x}_{i_1},\dot{x}_{i_2}\})$$ which violates $\chi$ being forced to be two-bounded.

The case where there exist distinct $i_0,i_1,i_2,i_3$ not in $e$ with $\chi(\{x_{i_0},x_{i_1}\})=\chi(\{x_{i_2},x_{i_3}\})$ is similarly handled.
\end{proof}

To prove that the rainbow Ramsey theorem holds in $N$ we must show that if $X\in N$ is an infinite set and $\chi:[X]^2\rightarrow C$ is a $2$-bounded coloring in $N$, then $N$ contains an infinite polychromatic subset of $X$. If $X$ happens to be well-orderable in $N$, there is no difficulty since the usual proof of the rainbow Ramsey theorem will go through. We thus only have to worry about sets in $N$ which cannot be well-ordered. Theorem \ref{RRTdontimplyRT} will be proven once we establish the following. 

\begin{lemma}
If $B\in N$ is a non-wellorderable set, then $N$ contains a bijection of $B$ with an infinite subset of $A$.
\end{lemma}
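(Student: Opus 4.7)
My plan is to extract the required bijection from the support structure of hereditarily symmetric names. Fix a hereditarily symmetric name $\dot{B}$ for $B$ with finite support $e\subseteq\omega$, and fix once and for all, in the ground model, a well-ordering of all hereditarily symmetric names. For each $b\in B$ let $\dot{b}$ denote the least (in this well-ordering) hereditarily symmetric name with $\dot{b}_G=b$, and let $e_b\subseteq\omega$ be its minimal support.

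The first step is to show that the subset $B_0:=\{b\in B:e_b\subseteq e\}$ is already well-orderable in $N$. Each such $b$ lies in the intermediate extension obtained by forcing with conditions whose domain is contained in $e\times\omega$, and this intermediate extension is a model of ZFC (a finite product of Cohen forcings) sitting inside $N$, so any well-ordering of $B_0$ computed there is invariant under $\textrm{fix}(e)$ and belongs to $N$. Since $B$ is not well-orderable in $N$, the set $B\setminus B_0$ must be non-empty. Next I would analyze the natural action of $\textrm{fix}(e)$ on $B$ given by $\sigma\cdot b:=\sigma(\dot{b})_G$, which is well-defined because $\sigma(\dot{B})=\dot{B}$ forces $\sigma(\dot{b})$ again to name an element of $B$. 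This partitions $B\setminus B_0$ into orbits, each infinite and parameterized (up to its stabilizer) by the $\textrm{fix}(e)$-translates of the finite pattern $e_b\setminus e\subseteq\omega\setminus e$.

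With the orbit structure in hand the bijection is built by assigning to each $b\in B\setminus B_0$ a canonical atom of $A$ extracted from $e_b\setminus e$ via the identification $n\mapsto x_n$ (for instance $x_{\min(e_b\setminus e)}$, with within-orbit ambiguities resolved by the fixed well-ordering of names), while mapping the well-orderable piece $B_0$ into the finitely many atoms $\{x_n:n\in e\}$ using its well-ordering. Since every infinite subset of $A$ in $N$ is cofinite in $A$ (a consequence of symmetry under finite supports), it suffices to verify that the resulting map is injective and hits a cofinite subset of $A$, possibly after enlarging the initial support $e$.

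\textbf{The main obstacle} is to secure injectivity and symmetry simultaneously. Distinct elements of $B$ can share the same minimal support but differ in the internal structural data of their names, so the ``$\min(e_b\setminus e)$'' recipe must be refined orbit-by-orbit using the canonical least-name choice to disambiguate; and the symmetry of the map (which is what places it in $N$) must be checked against the ideal of finite supports, relying on the fact that every ingredient of the construction --- the name $\dot{B}$, the ground-model well-ordering of names, and the identification $n\mapsto x_n$ --- is itself invariant under $\textrm{fix}(e)$. This symmetry verification is the heart of the lemma and is precisely where the adaptation of Blass's argument for the basic Fraenkel model enters in an essential way.
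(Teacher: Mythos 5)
Your opening move is the same as the paper's: the elements of $B$ whose (least) support is contained in a fixed finite set form a well-orderable subset of $N$, so non-well-orderability of $B$ yields some $b\in B$ whose least support properly exceeds the support $e$ of $\dot{B}$. From that point on, however, the construction does not work. The central error is the claim that $\sigma\cdot b:=\sigma(\dot{b})_G$ defines an action of $\textrm{fix}(e)$ on $B$ and partitions $B\setminus B_0$ into orbits. In a symmetric \emph{forcing} extension (unlike the Fraenkel permutation models of ZFA) the group does not act on the model: $\sigma(\dot{b})_G$ is an element of $B$ only when $\sigma(p)\in G$ for a condition $p$ forcing $\dot{b}\in\dot{B}$, and for most $\sigma$ fixing $e$ this fails. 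There is no orbit decomposition to exploit. The paper's proof is built precisely around this obstruction: the injection is given by the name $\sigma=\{\langle\langle\pi(\dot{x}_k),\pi(\dot{x})\rangle,\pi(p)\rangle:\pi\textrm{ fixes }e\}$, so that a pair enters the function only when $\pi(p)\in G$, and genericity is then invoked to show the domain is still an infinite subset of $A$. Your proposal has no mechanism playing this role.

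Several of the supporting claims are also false or absent. The map $b\mapsto x_{\min(e_b\setminus e)}$ is massively non-injective (infinitely many elements of $B$ can share a least support), and no concrete disambiguation is given; mapping a possibly infinite $B_0$ into the finite set $\{x_n:n\in e\}$ is impossible; and the assertion that every infinite subset of $A$ in $N$ is cofinite is true in the basic Fraenkel model but false in the basic Cohen model, where for instance $\{a\in A:a(0)=0\}$ is a symmetric, generically infinite and co-infinite subset of $A$ ($A$ is Dedekind-finite but not amorphous, since it is linearly ordered as a set of reals). Finally, the one step you correctly identify as ``the heart of the lemma''---injectivity of the constructed map---is exactly the part you do not carry out; in the paper it is proved by combining minimality of the least support of $x$ with Jech's Lemma 5.23 (an object supported by two sets is supported by their intersection) to derive a contradiction when two permutations sending $k$ to distinct indices yield the same element. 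Without that argument, and with the orbit picture unavailable, the proposal does not establish the lemma.
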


\begin{proof}
We take advantage of the theory of least supports. Our notation will match that of chapter 5 of \cite{Je}. Let $x\in N$, and $E\subseteq A$ with $E$ finite. Write $E=\{x_{i_0},\ldots x_{i_k}\}$. We say that $E$ supports $x$ and write $\Delta(E,x)$ if there is some hereditarily symmetric name $\dot{x}$ with $\dot{x}[G]=x$ such that $\{i_0,\ldots i_k\}$ supports $\dot{x}$. The class relation $\Delta$ is definable in $N$.

We first claim that if $B\in N$ and there is some single $E$ such that $\Delta(E,x)$ holds for all $x\in B$, then $B$ can be well-ordered in $N$. Say $E=\{x_{i_0},\ldots x_{i_k}\}$. By the axiom of replacement applied in $N$ there is an ordinal $\alpha$ so that for every $x\in B$ there is a hereditarily symmetric name $\dot{x}$ in $V_\alpha$ with support $e=\{i_0,\ldots i_k\}$ and $\dot{x}[G]=x$. Let $C=\{\tau[G]:\textrm{$\tau$ is a hereditarily symmetric name in $V_\alpha$ with support $e$}\}$. Then $C$ belongs to $N$ and $B\subseteq C$. Furthermore, $C$ can be well-ordered in $N$ since all the relevant names are supported by $e$. Thus $B$ can be well-ordered in $N$.

Recall now that every $x\in N$ has least support; that is, there is some $E_0$ with $\Delta(x,E_0)$ and such that $\Delta(x,E)$ implies $E_0\subseteq E$. Suppose $B$ is some non-wellorderable set with least support $E_0$, witnessed by the hereditarily symmetric name $\dot{B}$. By the above paragraph there is some $x\in B$ for which $\Delta(x,E_0)$ does not hold. Write the least support of $x$ as $E_1\cup\{x_k\}$ where $x_k$ does not belong to $E_0\cup E_1$. Let $\dot{x}$ be a hereditarily symmetric name for $x$ with support $E_1\cup\{x_k\}$. Enumerate $E_0=\{x_{i_0},\ldots x_{i_{l_1}}\}$ and $E_1=\{x_{j_0},\ldots x_{j_{l_2}}\}$. Let $p\in G$ with $$p\Vdash\dot{x}\in\dot{B} \textrm{ and $\dot{E}_1\cup\{\dot{x}_k\}$ is the least support of $\dot{x}$.}$$

Let $e_0=\{i_0,\ldots i_{l_1}\}$, let $e_1=\{j_0,\ldots j_{l_2}\}$ and let $e=e_0\cup e_1$. Consider the name $$\sigma=\{\langle\langle\pi(\dot{x}_k),\pi(\dot{x}) \rangle  ,\pi(p)   \rangle     :\pi\textrm{ fixes }e \}.$$ Then $f=\sigma[G]$ belongs $N$ since $\sigma$ is supported by $e$. We have four things to prove about $f$.

First, we check that $f$ is a function. If $\pi_1(\dot{x}_k)[G]=\pi_2(\dot{x}_k)[G]$ then $\pi_1(k)=\pi_2(k)$ and so $\pi_1^{-1}\pi_2$ fixes $e\cup\{k\}$ and thus $\dot{x}$. Then $\pi_1(\dot{x})=\pi_2(\dot{x})$.

Secondly we note that the range of $f$ is a subset of $X$. Any member of the range of $f$ has the form $\pi(\dot{x})[G]$ for some $\pi$ fixing $e$ with $\pi(p)\in G$. Since $\pi$ fixes $e$, $\pi(\dot{B})=\dot{B}$ and so $\pi(p)\Vdash\pi(\dot{x})\in\dot{B}$.

Next we observe that the domain of $f$ is an infinite subset of $A$. This is because $x_i$ belongs to the domain of $f$ whenever $\pi(p)\in G$ for some $\pi$ mapping $k$ to $i$. Since $G$ is generic this happens for infinitely many $i$.

Finally we claim that $f$ is injective. Otherwise we would have $\pi_1(\dot{x})[G]=\pi_2(\dot{x})[G]$ for some $\pi_1,\pi_2$ with $\pi_1(k)=k_1$, $\pi_2(k)=k_2$, $k_1\not=k_2$ and both $\pi_1(p)$ and $\pi_2(p)$ belonging to $G$. Let $q\leq\pi_1(p),\pi_2(p)$ with $q$ in $G$ and such that $q\Vdash\pi_1(\dot{x})=\pi_2(\dot{x})$. Now $\pi_1(\dot{x})$ and $\pi_2(\dot{x})$ have supports $e_1\cup\{k_1\}$ and $e_1\cup\{k_2\}$ respectively. Hence by \cite{Je} Lemma 5.23 there is some $\dot{z}$ with support $e_1$ and $q\Vdash\pi_1(\dot{x})=\pi_2(\dot{x})=\dot{z}$. Since $q$ extends $\pi_1(p)$ we have $$q\Vdash\textrm{$\dot{E}_1\cup\{\dot{x}_{k_1}\}$ is the least support of $\pi_1(\dot{x})$  }.$$ On the other hand, $q$ belongs to $G$ and $\pi_1(\dot{x})[G]=\dot{z}[G]$ and $\dot{z}$ has support strictly contained in $e_1\cup\{k_1\}$. Contradiction.
\end{proof}

As we mentioned, Galvin's trick also requires a small amount of choice. We do not know of a model of ZF where Ramsey's theorem holds but the rainbow Ramsey theorem fails.

\section{Polychromatic Ramsey theory and infinite exponent partition relations} \label{RRTnIER}

A result of Erd{\"o}s and Rado says that under the axiom of choice Ramsey's theorem fails for infinite exponent partitions (Proposition 7.1 of \cite{Kanamori}). Specifically, for any infinite cardinal $\kappa$ there is a $2$-coloring of the countable subsets of $\kappa$ so that no infinite subset of $\kappa$ has all of its countable subsets receiving the same color. In this section we show that the axiom of choice also implies the failure of the rainbow Ramsey theorem for infinite exponent partitions. Using Galvin's trick we may view our result as a strengthening of the Erd{\"o}s and Rado result. The work in this section is joint with Anush Tserunyan.
 
\begin{thm} \label{IE}
Let $\kappa$ be an infinite cardinal. There is a $2$-bounded coloring $\chi:[\kappa]^\omega\rightarrow C$ so that whenever $X\in[\kappa]^\omega$ there are distinct $a,b\in [X]^\omega$ with $\chi(a)=\chi(b)$.
\end{thm}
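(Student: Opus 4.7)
The plan is to use the axiom of choice to construct $\chi$ by transfinite induction. I enumerate $[\kappa]^\omega$ as $\{Y_\alpha : \alpha < \lambda\}$ where $\lambda = |[\kappa]^\omega|$, and build a $2$-bounded partial coloring stage by stage. At stage $\alpha$: if $Y_\alpha$ already contains two distinct subsets assigned the same color under the partial coloring $\chi_\alpha$ built so far, I set $\chi_{\alpha+1}=\chi_\alpha$; otherwise, I select two distinct fresh (i.e., uncolored) subsets $a_\alpha, b_\alpha \in [Y_\alpha]^\omega$ and extend $\chi$ by assigning both a new common color. Once the induction is complete, I assign each remaining uncolored subset its own unique color, preserving $2$-boundedness. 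By construction, each $Y_\alpha$ witnesses the failure of polychromaticity, so $\chi$ has no polychromatic $X \in [\kappa]^\omega$.

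The key observation enabling this construction is that if $Y_\alpha$ does not yet contain a monochromatic pair, then each previously formed color pair $\{a_\beta, b_\beta\}$ has at most one element lying in $[Y_\alpha]^\omega$ — otherwise $Y_\alpha$ would already be covered. Hence $|\mathrm{dom}(\chi_\alpha) \cap [Y_\alpha]^\omega| \leq |\alpha|$, and since $|[Y_\alpha]^\omega| = 2^{\aleph_0}$, fresh elements exist whenever $|\alpha| < 2^{\aleph_0}$. This makes the induction go through cleanly for $\kappa$ such that $|[\kappa]^\omega|\le 2^{\aleph_0}$, in particular for the core case $\kappa=\omega$.

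The hard part will be the case $|[\kappa]^\omega| > 2^{\aleph_0}$, where the induction extends past $2^{\aleph_0}$ stages and the naive count no longer directly produces fresh subsets. I expect to handle this by a more delicate choice of enumeration of $[\kappa]^\omega$, arranging that at late stages the set $Y_\alpha$ already contains an earlier-processed $Y_\beta$ (and hence inherits its monochromatic pair from the pair chosen at stage $\beta$), or alternatively by constraining the selected pairs $\{a_\beta, b_\beta\}$ to lie in a canonical cofinal family in $([\kappa]^\omega, \supseteq)$ so that each individual pair covers many later $Y_\alpha$'s at once. The delicate combinatorial management of pairs in this large-$\kappa$ setting is the main obstacle and is the step to which I would devote the most care.
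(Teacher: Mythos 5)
Your construction works for the case $\kappa\le 2^{\aleph_0}$, and there it is essentially the paper's own core argument: the paper isolates the same counting idea as a proposition, phrased as the existence of an injection $f:[\omega]^\omega\to[\omega]^\omega$ with $f(x)$ a proper infinite subset of $x$, built by transfinite recursion along an enumeration of $[\omega]^\omega$ in order type $2^\omega$. But the case $\kappa>2^{\aleph_0}$ is a genuine gap, not a routine refinement, and the repairs you sketch cannot work as stated. If $\kappa>2^{\aleph_0}$ there is no family $\mathcal{D}\subseteq[\kappa]^\omega$ with $|\mathcal{D}|\le 2^{\aleph_0}$ such that every $Y\in[\kappa]^\omega$ contains a member of $\mathcal{D}$: the union $\bigcup\mathcal{D}$ has size at most $2^{\aleph_0}<\kappa$, so any countably infinite $Y\subseteq\kappa\setminus\bigcup\mathcal{D}$ contains no member of $\mathcal{D}$. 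Hence after $2^{\aleph_0}$ stages you cannot arrange that every later $Y_\alpha$ already contains an earlier-processed pair (it would in particular have to contain some earlier $a_\beta$), and the ``canonical cofinal family'' idea runs into the same obstruction; meanwhile the freshness count genuinely fails once $|\alpha|\ge 2^{\aleph_0}$, since $[Y_\alpha]^\omega$ has only $2^{\aleph_0}$ elements.

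The paper's resolution is a different reduction, and it is the idea your outline is missing. It first proves the lemma that there is an injection $f:[\kappa]^\omega\to[\kappa]^\omega$ with $f(x)\subsetneq x$ for all $x$; from such an $f$, splitting each orbit $\{f^n(x):n\in\mathbb{Z}\}$ into even and odd positions yields two injections $f_0,f_1$ with disjoint ranges and $f_0(x),f_1(x)\subsetneq x$, and one then sets $\chi(f_0(x))=\chi(f_1(x))$ for each $x$ and gives all other sets distinct colors; this $\chi$ is $2$-bounded and every $X\in[\kappa]^\omega$ contains the monochromatic pair $f_0(X)\ne f_1(X)$. The lemma for general $\kappa$ is obtained from the countable case (your counting argument) by gluing along a maximal almost disjoint family $\{a_\alpha:\alpha<\lambda\}\subseteq[\kappa]^\omega$: fix $f_\alpha:[a_\alpha]^\omega\to[a_\alpha]^\omega$ as in the countable case, and for $x\in[\kappa]^\omega$ let $\alpha$ be least with $x\cap a_\alpha$ infinite and set $f(x)=f_\alpha(x\cap a_\alpha)\cup(x\setminus a_\alpha)$; almost disjointness together with minimality of $\alpha$ gives injectivity. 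Some device of this kind, localizing the $2^{\aleph_0}$-counting to countable sets and then assembling, is needed to cover $\kappa>2^{\aleph_0}$.
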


To prove the theorem it is enough for us to establish the following.

\begin{lemma} \label{IElemma}
Let $\kappa$ be an infinite cardinal. There exists an injective map $f:[\kappa]^\omega\rightarrow[\kappa]^\omega$ so that for each $x$ we have that $f(x)$ is a proper subset of $x$.
\end{lemma}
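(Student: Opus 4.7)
The plan is to partition $[\kappa]^\omega$ into equivalence classes under the relation $z\sim z'$ defined by $z\triangle z'$ being finite, and to define $f$ on each class separately. Using the axiom of choice we pick a representative $x_0$ for each $\sim$-class $C$; every element of $C$ is then uniquely expressible as $z=(x_0\setminus F^{+})\cup F^{-}$ for some finite $F^{+}\subseteq x_0$ and finite $F^{-}\subseteq\kappa\setminus x_0$. The aim is to set $f(z)=z\setminus\{a\}$ for some $a=a(F^{+},F^{-})\in x_0\setminus F^{+}$ chosen so that the map $F^{+}\mapsto F^{+}\cup\{a(F^{+},F^{-})\}$ is injective for each fixed $F^{-}$. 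Since $f$ removes only one element, $f(z)\sim z$, so $f$ preserves $\sim$-classes and global injectivity will follow from within-class injectivity.

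For the within-class construction, fix $x_0$ and $F^{-}$. Since $x_0$ is countably infinite, so is $[x_0]^{<\omega}$, and we enumerate it as $\{F^{+}_n:n\in\omega\}$. Recursively, at stage $n$ we select $a(F^{+}_n,F^{-})$ to be any element of $x_0\setminus F^{+}_n$ (say the $\kappa$-least) such that the set $F^{+}_n\cup\{a(F^{+}_n,F^{-})\}$ differs from each of the previously chosen sets $F^{+}_m\cup\{a(F^{+}_m,F^{-})\}$ with $m<n$. Each previous $m$ forbids at most one value of $a$, and $x_0\setminus F^{+}_n$ is countably infinite, so a valid $a$ always exists.

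To verify injectivity of $f$, suppose $f(z_1)=f(z_2)$. Since $z_i\triangle f(z_i)$ is a singleton, $f(z_i)\sim z_i$, and therefore $z_1$ and $z_2$ lie in a common $\sim$-class. On that class $f$ acts on parameters by $(F^{+},F^{-})\mapsto(F^{+}\cup\{a(F^{+},F^{-})\},F^{-})$; this map is injective because differing $F^{-}$'s give differing outputs in the second coordinate, and for each fixed $F^{-}$ the $F^{+}$-coordinate map is injective by construction. Thus $z_1=z_2$, and plainly $f(z)=z\setminus\{a\}$ is a proper infinite subset of $z$, completing the proof.

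The main obstacle sidestepped by this approach is that a naive transfinite recursion over a well-ordering of $[\kappa]^\omega$ runs into trouble when the stage $\alpha$ has cardinality at least $2^{\aleph_0}$, since the forbidden set could then in principle exhaust all $2^{\aleph_0}$ infinite proper subsets of $x_\alpha$. Partitioning by $\sim$ first eliminates this difficulty: on each class the construction of $f$ is a finite-recursion argument that relies only on $x_0$ being countable, and the axiom of choice enters only through the selection of class representatives.
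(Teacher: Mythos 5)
Your proof is correct, but it takes a genuinely different route from the paper's. The paper first handles $\kappa=\omega$ by a transfinite recursion of length $2^\omega$ (each $x$ has $2^\omega$ infinite proper subsets while fewer than $2^\omega$ values of $f$ have been committed, so one can always diagonalize), and then lifts this to arbitrary $\kappa$ by fixing a maximal almost disjoint family $\{a_\alpha\}$ of countable subsets of $\kappa$ and setting $f(x)=f_\alpha(x\cap a_\alpha)\cup(x\setminus a_\alpha)$ for the least $\alpha$ with $x\cap a_\alpha$ infinite; injectivity then needs a short case analysis exploiting almost disjointness. You instead partition $[\kappa]^\omega$ by finite symmetric difference and work locally on each class, reducing everything to the injectivity of the finite-set map $F^+\mapsto F^+\cup\{a(F^+,F^-)\}$, arranged by an $\omega$-length recursion in which each earlier commitment forbids at most one candidate for $a$. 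This avoids both the MAD family and any recursion of uncountable length, and it yields the slightly stronger conclusion that $x\setminus f(x)$ is always a singleton. Both arguments use choice essentially (a well-ordering of $[\omega]^\omega$ and a MAD family in the paper; class representatives and a well-ordering of $\kappa$ in yours), consistent with the paper's remark that some choice is unavoidable here. Your closing observation about why a naive recursion over all of $[\kappa]^\omega$ breaks down once more than $2^{\aleph_0}$ values have been committed is exactly the obstruction that the paper's MAD-family step is designed to circumvent.
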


Given the lemma, Theorem \ref{IE} is proven as follows. Let $f$ be as in Lemma \ref{IElemma}. We define $f_0$ and $f_1$ two injections from $[\kappa]^\omega$ into $[\kappa]^\omega$ with disjoint ranges so that $f_0(x)$ and $f_1(x)$ are both proper subsets of $x$ for each $x$. This can be done by looking at the orbits of $f$; that is, each collection $\{f^n(x):n\in\mathbb{Z}\}$. Because $f$ is injective, the orbits partition the range of $f$. Select an enumeration of each, and take $f_0$ and $f_1$ so that $f_0(x)$ is an even member of the orbit of $x$ while $f_1(x)$ is an odd member of the orbit of $x$. With $f_0$ and $f_1$ defined, we may define $\chi$ by setting $\chi(f_0(x))=\chi(f_1(x))$, and letting $\chi$ take distinct values on the other members of $[\kappa]^\omega$. Then $\chi$ is as desired.

Let us remark that since there are models of ZF where Ramsey's theorem holds for infinite exponent partition relations, by Galvin's trick there are models of ZF where the rainbow Ramsey theorem holds for infinite exponent partition relations. Thus this argument also shows that the axiom of choice is required to prove the existence of such injections $f_0$ and $f_1$.

We now make the observation that the lemma holds if $\kappa=\omega$.

\begin{prop} \label{IEprop}
There exists an injection $f:[\omega]^\omega\rightarrow[\omega]^\omega$ so that for each $x$ we have that $f(x)$ is a proper subset of $x$.
\end{prop}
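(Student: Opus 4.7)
My approach is transfinite recursion on a well-ordering of $[\omega]^\omega$. The key observation is that for every countably infinite $x \subseteq \omega$, the family $[x]^\omega$ of infinite subsets of $x$ has cardinality $\mathfrak{c} := 2^{\aleph_0}$, which matches the cardinality of the domain $[\omega]^\omega$ itself.

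Concretely, using the axiom of choice, I would fix an enumeration $[\omega]^\omega = \{x_\alpha : \alpha < \mathfrak{c}\}$ and define $f(x_\alpha)$ recursively on $\alpha$. At stage $\alpha < \mathfrak{c}$, the set of forbidden values $\{x_\alpha\} \cup \{f(x_\beta) : \beta < \alpha\}$ has cardinality at most $|\alpha| + 1 < \mathfrak{c}$, whereas $|[x_\alpha]^\omega| = \mathfrak{c}$. Hence the complement
$$[x_\alpha]^\omega \setminus \bigl(\{x_\alpha\} \cup \{f(x_\beta) : \beta < \alpha\}\bigr)$$
is nonempty, and I pick $f(x_\alpha)$ from it.

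By construction $f(x_\alpha)$ is an infinite proper subset of $x_\alpha$, and distinct stages $\alpha$ produce distinct values, so $f$ is the desired injection. The only step that really needs verifying is the cardinality inequality $|[x]^\omega| = \mathfrak{c}$ for every countably infinite $x$, which is standard. I do not see any genuine obstacle; the proposition reduces to this counting argument combined with transfinite recursion.
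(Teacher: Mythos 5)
Your proof is correct and is essentially identical to the paper's: both fix a well-ordering of $[\omega]^\omega$ in order type $2^{\aleph_0}$ and define $f$ by transfinite recursion, at each stage choosing an infinite proper subset of $x_\alpha$ avoiding the fewer-than-continuum previously used values. Your version is slightly more explicit in noting that the chosen value must lie in $[x_\alpha]^\omega$ (i.e., be infinite) and must avoid $x_\alpha$ itself, but the argument is the same.
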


\begin{proof}
Fix an enumeration of $[\omega]^\omega$ in ordertype $2^\omega$. We define $f$ by transfinite recursion. At stage $\alpha$, we define $f(x)$ where $x$ is the $\alpha$th member of $[\omega]^\omega$. Since $x$ has $2^\omega$ many proper subsets and since there are strictly less than $2^\omega$ values of $f$ which have been decided we may select a value for $f(x)$ not equal to any earlier decided value.
\end{proof}

\begin{proof}[Proof of Lemma \ref{IElemma}]
Fix $A=\{a_\alpha:\alpha<\lambda\}$ a maximal almost disjoint family of members of $[\kappa]^\omega$. That is, $a_\alpha\cap a_\beta$ is finite for distinct $\alpha$ and $\beta$ and for any $x\in[\kappa]^\omega$ there is some $\alpha$ such that $x\cap a_\alpha$ is infinite.

We construct $f$ as follows. For each $\alpha<\lambda$ let $f_\alpha:[a_\alpha]^\omega\rightarrow[a_\alpha]^\omega$ be as in Proposition \ref{IEprop}. Given $x$ in $[\kappa]^\omega$, take $\alpha$ least for which $x\cap a_\alpha$ is infinite and set $f(x)$ equal to $f_\alpha(x\cap a_\alpha)\cup (x\setminus a_\alpha)$.

We claim that $f$ is injective. Fix $x_0,x_1\in[\kappa]^\omega$ with $\alpha_0$ and $\alpha_1$ least so that $x_0\cap a_{\alpha_0}$ and $x_1\cap a_{\alpha_1}$ are infinite. Assume without loss of generality that $\alpha_0\leq\alpha_1$. Suppose $f(x_0)=f(x_1)$ so that $$f_{\alpha_0}(x_0\cap a_{\alpha_0})\cup (x_0\setminus a_{\alpha_0})=f_{\alpha_1}(x_1\cap a_{\alpha_1})\cup (x_1\setminus a_{\alpha_1}).$$ Note that $f_{\alpha_0}(x_0\cap a_{\alpha_0})$ is an infinite subset of $a_{\alpha_0}$ and $f_{\alpha_1}(x_1\cap a_{\alpha_1})$ is an infinite subset of $a_{\alpha_1}$. Consider the following two possibilities. If $f_{\alpha_0}(x_0\cap a_{\alpha_0})$ has infinite intersection with $f_{\alpha_1}(x_1\cap a_{\alpha_1})$, then $a_{\alpha_0}\cap a_{\alpha_1}$ is infinite so that $\alpha_0=\alpha_1$. The other possibility is that $f_{\alpha_0}(x_0\cap a_{\alpha_0})\cap x_1\setminus a_{\alpha_1}$ is infinite in which case $a_{\alpha_0}\cap x_1$ is infinite so that $\alpha_0$ is equal to $\alpha_1$ by minimality of $\alpha_1$. In either case we can conclude that $\alpha_0$ and $\alpha_1$ are equal to the same ordinal $\alpha$.

Thus $$f_\alpha(x_0\cap a_\alpha)\cup (x_0\setminus a_\alpha)=f_\alpha(x_1\cap a_\alpha)\cup (x_1\setminus a_\alpha).$$ Since $f_\alpha(x_0\cap a_\alpha)=f_\alpha(x_1\cap a_\alpha)$ we have $x_0\cap a_\alpha=x_1\cap a_\alpha$ by injectivity of $f_\alpha$. Because $x_0\setminus a_\alpha=x_1\setminus a_\alpha$ also holds we get $x_0=x_1$ as desired. 
\end{proof}

\section{Polychromatic Ramsey theory and ultrafilters on $\omega$} \label{RRTnUF}

We turn our attention now to monochromatic and polychromatic Ramsey theory in the context of ultrafilters on $\omega$. The following objects are central in the study of such ultrafilters.

\begin{defin}
A nonprincipal ultrafilter $\mathcal{U}$ is \emph{Ramsey} if for every coloring $\chi:[\omega]^2\rightarrow 2$ there is an $A\subseteq\omega$ belonging to $\mathcal{U}$ which is monochromatic for $\chi$.
\end{defin}

Ramsey ultrafilters are often called \emph{selective} ultrafilters in connection with the following characterization. An ultrafilter $\mathcal{U}$ is Ramsey exactly when given any partition of $\omega$ into countably many pieces $\bigcup_{n<\omega}A_n$ with each $A_n\not\in\mathcal{U}$ we may find $B\in\mathcal{U}$ such that $|A_n\cap B|\leq 1$ for each $n\in\omega$. Another salient characterization of Ramsey ultrafilters is that they are precisely nonprincipal ultrafilters which are minimal in the Rudin-Keisler ordering.

The existence of Ramsey ultrafilters is not provable in $\ZFC$. This was first established by Kunen \cite{Kunen}. Martin's Axiom ($\MA$) is sufficient to prove their existence; indeed $\MA$ is generally the context in which relationships between various classes of ultrafilters are studied. Such investigations have been pursued by Baumgartner \cite{Baumgartner}, Brendle \cite{Brendle} and others.

As an analogue to the Ramsey theoretic characterization of Ramsey ultrafilters, we present the following definition.

\begin{defin}
A nonprincipal ultrafilter $\mathcal{U}$ is \emph{rainbow Ramsey} if for every $2$-bounded coloring $\chi:[\omega]^2\rightarrow \omega$ there is
an $A\subseteq\omega$ belonging to $\mathcal{U}$ which is polychromatic for $\chi$.
\end{defin}

By Galvin's trick every Ramsey ultrafilter is a rainbow Ramsey ultrafilter. Assuming $\MA$ we will prove that the converse does not hold. We will also compare the notion  of rainbow Ramsey utrafilter to other notable classes of special ultrafilters on $\omega$. Let us introduce the special ultrafilters we will consider.

\begin{defn}
A nonprincipal ultrafilter $\mathcal{U}$ on $\omega$ is \emph{weakly selective} if whenever $\omega$ is partitioned into countably many pieces $\bigcup_{n<\omega}A_n$ with each $A_n\not\in\mathcal{U}$ we may find $B\in\mathcal{U}$ such that $A_n\cap B$ is finite for each $n\in\omega$.
\end{defn}

Weakly selective ultrafilters are also often referred to as P-points in connection with the fact that an ultrafilter is weakly selective exactly when for every countable family $\{B_n:n\in\omega\}$ of members of $\mathcal{U}$ there is some $B\in\mathcal{U}$ such that $B\subseteq^*B_n$ for each $n\in\omega$. (Here $\subseteq^*$ is the preorder of almost containment; $A\subseteq^* B$ means $A\setminus B$ is finite).

\begin{defn}
A nonprincipal ultrafilter $\mathcal{U}$ on $\omega$ is rapid if for every $f:\omega\rightarrow\omega$ there is some $A\in\mathcal{U}$ such that $f\leq^*e_A$. (Here $e_A$ is the function enumerating $A$ in increasing order, and $\leq^*$ is the preorder of eventual domination.)
\end{defn}

The next definition scheme is due to Baumgartner \cite{Baumgartner}. In this paper ideals will always contain all possible finite sets.

\begin{defn}
Let $\mathcal{I}$ be an ideal on some set $X$. We say that a nonprincipal ultrafilter on $\omega$ is an $\mathcal{I}$-ultrafilter if for every $f:\omega\rightarrow X$ there is some $A\in\mathcal{U}$ with $f(A)\in\mathcal{I}$.
\end{defn}

For example we could take $\mathcal{I}$ to be the nowhere dense subsets of $\mathbb{Q}$, or we could take $\mathcal{I}$ to be the discrete subsets of $\mathbb{Q}$; in these cases we have the notion of a nowhere dense ultrafilter and the notion of a discrete ultrafilter, respectively. Every Ramsey ultrafilter is rapid, and every Ramsey ultrafilter is weakly selective. Every weakly selective ultrafilter is discrete, and every discrete ultrafilter is nowhere dense.

We connect rainbow Ramsey ultrafilters to these classes as follows. Every rainbow Ramsey ultrafilter is nowhere dense, but (assuming $\MA$) there exist rainbow Ramsey ultrafilters which are not discrete as well as rainbow Ramsey ultrafilters which are not rapid. Shelah proved \cite{Shelah} that there are models of ZFC with no nowhere dense ultrafilters; this implication shows that the same is true of rainbow Ramsey ultrafilters. We will also show that $\MA$ implies the existence of a weakly selective ultrafilter which is not rainbow Ramsey. Together these results rule out the possibility of the concept of rainbow Ramsey ultrafilter being equivalent to any previously studied special class of ultrafilter.

In our constructions of ultrafilters which are rainbow Ramsey but lack some other property we will be interested in building polychromatic sets which are large in some sense. Let us describe some tools that will help us accomplish this. Fix a coloring $\chi:[\omega]^2\rightarrow\omega$. We assume throughout that $\chi$ is $2$-bounded. For $a,b\in\omega$ we will usually write $\chi(a,b)$ for $\chi(\{a,b\})$. If $X\subseteq\omega$ is finite and $a\in\omega$ we write $X<a$ to mean $\max(X)<a$. Similarly we will write $X<Z$ to mean $\max(X)<\min(Z)$.

\begin{defin} \label{normDef}
A set $A\subseteq\omega$ is {\em normal} if whenever $a_0<a_1$ and $b_0<b_1$ are elements of $A$ with $\chi(a_0,a_1)=\chi(b_0,b_1)$ then we necessarily have $a_1=b_1$.
\end{defin}

Generally our constructions of large polychromatic sets will entail first building large normal sets.

Suppose $X$ is a given finite polychromatic set. We define $E(X)$ by setting $$E(X)=\{a:X\cup a\textrm{ is polychromatic}\}.$$ We will sometimes write $E(X\cup x)$ as shorthand for $E(X\cup\{x\})$. The notation $A\subseteq^*B$ means $B\setminus A$ is finite.

\begin{prop} \label{union}
Suppose $A$ is normal. If $X\subseteq A$ is polychromatic, $|X|\leq n$ and $a_0<\ldots<a_n$ belong to $A\cap E(X)$ then $$A\cap E(X)\subseteq^*A\cap(E(X\cup a_0)\cup\ldots\cup E(X\cup a_n)).$$
\end{prop}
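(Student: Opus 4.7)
The plan is to prove the almost-containment by showing that for $b \in A \cap E(X)$ with $b$ larger than $M := \max(X \cup \{a_0,\ldots,a_n\})$, the assumption $b \notin E(X \cup a_i)$ for \emph{every} $i \in \{0, \ldots, n\}$ forces a contradiction. Only finitely many $b \in A \cap E(X)$ lie at or below $M$, so this yields the desired $\subseteq^*$ inclusion. For each supposedly bad index $i$, $X \cup \{a_i, b\}$ fails to be polychromatic, meaning two distinct pairs in it share a color; polychromaticity of $X$, of $X \cup \{a_i\}$ (since $a_i \in E(X)$), and of $X \cup \{b\}$ (since $b \in E(X)$) confines this to one of four configurations:
\begin{enumerate}
\item[(A)] $\chi(a_i, b) = \chi(x, y)$ for some $x, y \in X$;
\item[(B)] $\chi(a_i, b) = \chi(a_i, x)$ for some $x \in X$;
\item[(C)] $\chi(a_i, b) = \chi(x, b)$ for some $x \in X$;
\item[(D)] $\chi(a_i, x) = \chi(b, y)$ for some $x, y \in X$.
\end{enumerate}

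Next I would invoke the normality of $A$ to knock out Cases (A), (B), and (D). In each of these the two clashing pairs live in $A$, one of them having top element $b$ and the other having top element strictly less than $b$ (because $b$ exceeds every member of $X \cup \{a_0, \ldots, a_n\}$). Normality demands that two same-colored pairs in $A$ share their top element, an immediate contradiction. So only Case (C) can occur for any bad index $i$.

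Finally I would close with a pigeonhole argument driven by $2$-boundedness. For each bad $i$, let $x_i \in X$ be a witness to Case (C), so $\chi(a_i, b) = \chi(x_i, b)$. If two distinct bad indices shared a witness, say $x_i = x_j$, then $\{a_i, b\}$, $\{a_j, b\}$, and $\{x_i, b\}$ would be three distinct pairs all receiving the color $\chi(x_i, b)$, violating $2$-boundedness. Therefore $i \mapsto x_i$ injects the bad indices into $X$, so at most $|X| \leq n$ of the $n+1$ indices can be bad. At least one $i$ is therefore good, giving $b \in A \cap E(X \cup a_i)$, which is what we needed.

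The main insight, and the step I expect to carry the conceptual weight, is recognizing that normality single-handedly eliminates three of the four failure configurations; without it one would be reduced to a more delicate case-by-case accounting using $2$-boundedness alone. Once normality does that work, the arithmetic gap between $|X| \leq n$ and the $n+1$ witnesses $a_0, \ldots, a_n$ is exactly what the final pigeonhole requires.
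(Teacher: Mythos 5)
Your proof is correct and takes essentially the same route as the paper's: normality reduces every failure of $b\in E(X\cup a_i)$ to a collision of the form $\chi(a_i,b)=\chi(x,b)$ with $x\in X$, and the pigeonhole on $n+1$ indices versus at most $n$ witnesses in $X$ then contradicts $2$-boundedness. The paper merely compresses your explicit four-case analysis into a single sentence.
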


\begin{proof}
We are claiming that every member of $A\cap E(X)$ greater than $a_n$ belongs to $A\cap\bigcup_{i\leq n}E(X\cup a_i)$. Enumerate $X=\{x_0,\ldots x_{n-1}\}$. Suppose for contradiction that $z>a_n$ with $z\in A\cap E(X)$, but $z$ does not belong to any $A\cap E(X\cup a_i)$. For each such $i$, since $X\cup\{z\}$ is polychromatic, $X\cup\{a_i\}$ is polychromatic, $X\cup\{a_i,z\}$ is not polychromatic and $A$ is normal there must be some $j_i$ such that $\chi(a_i,z)=\chi(x_{j_i},z)$. There are $n+1$ possible $i$ while only $n$ possible $j_i$. By the pigeonhole principle there is some $j$ and $i_0<i_1$ such that $j=j_{i_0}=j_{i_1}$. Then $\chi(x_j,z)=\chi(a_{i_0},z)=\chi(a_{i_1},z)$. But that contradicts $\chi$ being $2$-bounded.
\end{proof}

\begin{lemma}\label{theideallemma}
Suppose $A\subseteq\omega$ is normal. Let $I$ be an ideal on $\omega$. Let $X\subseteq A$ be polychromatic with $|X|\leq n$. Then if $E(X)\cap A\not\in I$, $$\{a\in A\cap E(X): A\cap E(X\cup a)\in I\}$$ has size at most $n$.
\end{lemma}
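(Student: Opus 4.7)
The plan is to argue by contradiction, using Proposition \ref{union} to cover $A\cap E(X)$ (up to a finite error) by finitely many sets that are assumed to lie in the ideal $I$.

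Suppose toward contradiction that the set
$$S=\{a\in A\cap E(X):A\cap E(X\cup a)\in I\}$$
has size at least $n+1$. Then I can choose $a_0<a_1<\cdots<a_n$ all lying in $S$, hence in $A\cap E(X)$, with the additional property that $A\cap E(X\cup a_i)\in I$ for each $i\leq n$. This is exactly the setup of Proposition \ref{union}: $X\subseteq A$ is polychromatic with $|X|\leq n$, $A$ is normal, and $a_0,\ldots,a_n$ is an increasing sequence of $n+1$ elements of $A\cap E(X)$. Applying that proposition yields
$$A\cap E(X)\;\subseteq^*\;A\cap\bigl(E(X\cup a_0)\cup\cdots\cup E(X\cup a_n)\bigr).$$

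Now I invoke that $I$ is an ideal containing all finite sets. Since each $A\cap E(X\cup a_i)$ lies in $I$, so does their finite union, and therefore so does the right-hand side displayed above. An $\subseteq^*$-subset of a member of $I$ is itself in $I$ (the symmetric difference is finite, and finite sets are in $I$, and $I$ is closed under subsets and finite unions), so $A\cap E(X)\in I$. But this directly contradicts the standing hypothesis that $A\cap E(X)\notin I$.

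The only real content here is the application of Proposition \ref{union}, which in turn depends on normality of $A$ and $2$-boundedness of $\chi$; those have already been incorporated into that proposition. The only point requiring care is that the ideal $I$ absorbs the ``$\subseteq^*$'' error, and this follows from the convention (noted just before the definition of $\mathcal{I}$-ultrafilter) that ideals in this paper contain all finite sets. So I do not anticipate any genuine obstacle beyond setting up the contradiction cleanly.
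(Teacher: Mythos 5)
Your proof is correct and is essentially identical to the paper's: both argue by contradiction, pick $n+1$ elements $a_0,\ldots,a_n$ with each $A\cap E(X\cup a_i)\in I$, apply Proposition \ref{union} to almost-cover $A\cap E(X)$ by the finite union, and conclude $A\cap E(X)\in I$. Your explicit remark that the $\subseteq^*$ error is absorbed because ideals here contain all finite sets is a detail the paper leaves implicit, but it matches the paper's stated convention.
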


\begin{proof}
Suppose for contradiction that $a_0,\ldots a_n$ are distinct members of $A\cap E(X)$ with each $A\cap E(X\cup a_i)$ belonging to $I$. Then by Proposition \ref{union} we have $$A\cap E(X)\subseteq^*A\cap(E(X\cup a_0)\cup\ldots\cup E(X\cup a_n)).$$  But then $A\cap E(X)$ belongs to $I$. Contradiction.
\end{proof}




The direct proof of the rainbow Ramsey theorem given in \cite{Csima} may be viewed as an application of Lemma \ref{theideallemma} taking $\mathcal{I}$ to be the ideal of finite sets, and serves as a paradigm for our constructions of large polychromatic sets in subsequent subsections. 

\subsection{An ultrafilter which is rainbow Ramsey and not rapid}\label{growth}

In this subsection we use $\MA$ to construct a rainbow Ramsey ultrafilter $\mathcal{U}$ which is not rapid. To accomplish this we must be able to build polychromatic sets which are large in the sense that they have enumerating functions which do not grow too fast. That is, we define a function $f:\omega\rightarrow\omega$ and construct $\mathcal{U}$ so that for every $A\in\mathcal{U}$ we have $f\not\leq^*e_A$. Such constructions are not possible in the monochromatic theory; given a function one can always define a $2$-coloring so that any monochromatic set dominates that function.

\begin{prop}\label{buildNrm}
There is a function $\Nrm:\omega^2\rightarrow\omega$ such that the following holds. Suppose $\chi$ is a $2$-bounded coloring and $X$ is normal with $|X|=p$, and suppose $Z\subseteq\omega$ with $Z>X$ and $\Nrm(p,n)\leq |Z|$. There is $Y\subseteq Z$ with $|Y|\geq n$ such that $\chi$ is normal on $X\cup Y$.
\end{prop}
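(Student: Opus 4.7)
The plan is to prove this by induction on $n$, via the recursive definition $\Nrm(p,0)=0$ and $\Nrm(p,n)=1+\binom{p}{2}+\Nrm(p+1,n-1)$. The base case is immediate: take $Y=\emptyset$, so $X\cup Y=X$ is normal by hypothesis. For the inductive step, given a normal set $X$ of size $p$ and $Z>X$ with $|Z|\geq \Nrm(p,n)$, I will select a single element $y_0\in Z$ for which $X\cup\{y_0\}$ remains normal, and then apply the inductive hypothesis to the enlarged normal set $X\cup\{y_0\}$ (of size $p+1$) together with the tail $Z\cap(y_0,\infty)$.

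The key counting step will be to bound the number of ``bad'' choices of $y_0$. Since $y_0>X$, every pair in $X\cup\{y_0\}$ that contains $y_0$ has $y_0$ as its top element, so any two such pairs automatically share a top. Hence the only way $X\cup\{y_0\}$ can fail normality is through a color collision between some pair $(b_0,b_1)\in [X]^2$ (with top $b_1\in X$) and some pair $(a,y_0)$ with $a\in X$ (with top $y_0\neq b_1$), giving $\chi(a,y_0)=\chi(b_0,b_1)$. By $2$-boundedness of $\chi$, the color $\chi(b_0,b_1)$ is carried by at most one other pair, so each of the $\binom{p}{2}$ pairs in $[X]^2$ rules out at most one candidate for $y_0$. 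Thus at most $\binom{p}{2}$ elements of $Z$ are forbidden.

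Choosing $y_0$ to be the smallest non-forbidden element of $Z$, at most $1+\binom{p}{2}$ elements of $Z$ are $\leq y_0$, so $|Z\cap(y_0,\infty)|\geq |Z|-1-\binom{p}{2}\geq \Nrm(p+1,n-1)$. Applying the inductive hypothesis to $X\cup\{y_0\}$ and $Z\cap(y_0,\infty)$ produces $Y'\subseteq Z\cap(y_0,\infty)$ of size at least $n-1$ with $X\cup\{y_0\}\cup Y'$ normal; setting $Y=\{y_0\}\cup Y'$ then finishes the proof. I do not anticipate a real obstacle here: the only substantive content is the structural observation that a single new element $y_0$ above a normal set $X$ can break normality only through the mixed-versus-intra-$X$ collision described above, after which $2$-boundedness controls the counting and the recursion assembles itself.
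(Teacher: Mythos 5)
Your proof is correct and follows essentially the same route as the paper: the identical one-point-extension observation (a new top element $y_0>X$ can only break normality by colliding with one of the $\binom{p}{2}$ pairs inside $X$, each of which forbids at most one candidate by $2$-boundedness), iterated to give the recursion. Your recursion $\Nrm(p,n)=1+\binom{p}{2}+\Nrm(p+1,n-1)$ unrolls to the same function as the paper's $\Nrm(p,n+1)=\Nrm(p,n)+\binom{p+n}{2}+1$; the only difference is bookkeeping order.
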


\begin{proof}
For each $a<b\in X$ there is at most one other pair $c<d$ with $\chi(a,b)=\chi(c,d)$ and hence some $z$ from the first ${p \choose 2}+1$ elements of $Z$ gives $X\cup z$ is normal. By iterating this observation we see that we may define $\Nrm$ recursively; $\Nrm(p,n+1)=\Nrm(p,n)+{p+n \choose 2}+1$.
\end{proof}

\begin{prop}\label{buildh}
There is a function $h:\omega^2\rightarrow\omega$ with the following properties. Let $\chi$ be a $2$-bounded coloring and $A\subseteq\omega$ a normal set. Let $\mathcal{I}$ be any ideal on $\omega$, $X\subseteq A$ a polychromatic set with $|X|\leq p$ and $A\cap E(X)\not\in\mathcal{I}$, and $Z\subseteq A\cap E(X)$ with $h(p,n)\leq |Z|$. There is $Y\subseteq Z$ with $|Y|\geq n$ such that $X\cup Y$ is polychromatic and $A\cap E(X\cup Y)\not\in I$.
\end{prop}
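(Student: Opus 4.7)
The plan is to prove the proposition by induction on $n$, building $Y$ one element at a time while tracking a shrinking subset of $Z$. I would define $h$ recursively by $h(p, 0) = 0$ and
\[
h(p, n) = (p+1) \cdot h(p+1, n-1) + (2p+1) \quad \text{for } n \geq 1.
\]
The base case $n = 0$ is trivial ($Y = \emptyset$). For the inductive step, assuming $|X| \leq p$, $A \cap E(X) \notin \mathcal{I}$, and $|Z| \geq h(p, n)$, the goal is to pick a single element $y_0 \in Z$ such that $X \cup \{y_0\}$ is polychromatic, $A \cap E(X \cup \{y_0\}) \notin \mathcal{I}$, and some subset $Z' \subseteq Z \cap E(X \cup \{y_0\})$ disjoint from $\{y_0\}$ has at least $h(p+1, n-1)$ elements. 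Applying the inductive hypothesis to $X \cup \{y_0\}$, $p+1$, $n-1$, and $Z'$ yields some $Y' \subseteq Z'$ of size $n-1$ with the required properties, and $Y := \{y_0\} \cup Y'$ then works.

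To choose $y_0$, I would examine the $2p+1$ smallest elements $c_0 < \cdots < c_{2p}$ of $Z$. By Lemma \ref{theideallemma} applied to $X$, at most $p$ of these are bad in the sense that $A \cap E(X \cup \{c_j\}) \in \mathcal{I}$. Let $b_0 < \cdots < b_p$ be the $p+1$ smallest good-for-$\mathcal{I}$ elements among the $c_j$'s; since at most $p$ of the $c_j$'s are bad, $b_p \leq c_{2p}$, and hence
\[
|Z \cap (b_p, \infty)| \geq |Z| - (2p+1) \geq (p+1) \cdot h(p+1, n-1).
\]
Now Proposition \ref{union} applied to $X$ and the elements $b_0 < \cdots < b_p$ of $A \cap E(X)$ gives $Z \cap (b_p, \infty) \subseteq \bigcup_{i=0}^{p} E(X \cup \{b_i\})$, and by pigeonhole some index $i$ satisfies $|Z \cap (b_p, \infty) \cap E(X \cup \{b_i\})| \geq h(p+1, n-1)$. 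I would set $y_0 := b_i$. Then $X \cup \{y_0\}$ is polychromatic (since $y_0 \in Z \subseteq E(X)$), $A \cap E(X \cup \{y_0\}) \notin \mathcal{I}$ (since $b_i$ is good-for-$\mathcal{I}$), and the set $Z \cap (b_p, \infty) \cap E(X \cup \{b_i\})$ (which is disjoint from $\{y_0\}$ because $y_0 = b_i \leq b_p$) serves as the candidate set $Z'$ for the recursive call.

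The main obstacle is that $y_0$ must simultaneously satisfy two competing demands: it must be good-for-$\mathcal{I}$ (Lemma \ref{theideallemma} forces us to exclude at most $p$ elements of $A \cap E(X)$), and the intersection of $Z$ with $E(X \cup \{y_0\})$ must remain large enough to continue the recursion (Proposition \ref{union} lets us achieve this at the cost of a factor $p+1$ via averaging over $p+1$ candidates). Taking the first $2p+1$ elements of $Z$ provides exactly the slack needed to both avoid the $p$ potentially bad-for-$\mathcal{I}$ elements and retain $p+1$ candidates for Proposition \ref{union}; this explains the additive $2p+1$ and multiplicative $p+1$ terms in the recursion for $h$.
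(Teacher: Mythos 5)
Your proof is correct and follows essentially the same route as the paper: iterate a one-point extension, using Lemma \ref{theideallemma} to discard the at most $p$ elements that are bad for $\mathcal{I}$ and Proposition \ref{union} with a pigeonhole count over $p+1$ surviving candidates to keep a reservoir of size $h(p+1,n-1)$. The only differences are cosmetic — you phrase the counting as a direct averaging argument where the paper argues by contradiction, and your additive constant $2p+1$ versus the paper's $2p+2$ is an immaterial slack.
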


\begin{proof}
As in Proposition \ref{buildNrm} this follows by iterating an observation for extending by one point. This time the observation is the claim that given $N_0\in\omega$, if $N_1\geq N_0(p+1)+2p+2$ and $Z$ has size at least $N_1$ then for some $z$ equal to one of the first $2p+1$ members of $Z$ we have $A\cap E(X\cup z)\not\in\mathcal{I}$ and $|Z\cap E(X\cup z)|\geq N_0$.

Let us verify this claim. By Lemma \ref{theideallemma} there are at most $p$ members $z$ of $A\cap E(X)$ with $A\cap E(X\cup z)$ belonging to $I$. Remove these from $Z$ and call the resulting set $Z_0$. Then $Z_0$ has size at least $N_0(p+1)+p+2$ and it is enough to show that one of the first $p+1$ members of $Z_0$ works. Suppose otherwise for contradiction: let $a_0,\ldots a_p$ be the first $p+1$ members of $Z_0$ and assume that each $|Z\cap E(X\cup a_i)|\leq N_0$. Then $$|E(X\cup a_0)\cap Z_0\cup\ldots\cup E(X\cup a_p)\cap Z_0|\leq N_0(p+1).$$ But now we can select a $z$ in $Z_0\subseteq E(X)$ above $a_p$ and not belonging to any of the $E(X\cup a_i)$ and that violates Proposition \ref{union}.
\end{proof}

Now define a function $g$ as follows. For each $n\in\omega$, set $g(1,n)=n+1$. Then recursively define $g$ so that $$g(k+1,n)>h(k,g(k,n)),\Nrm(k,g(k,n)),2\cdot g(k,n).$$ We let $f:\omega\rightarrow\omega$ be a function eventually dominating (for each fixed $k$ and $l$) the map that sends $n$ to $g(k,n)+l$. We say a set is $f$-rapid if $f\leq^*e_A$.

We build $\mathcal{U}$ by constructing a filter $\mathcal{F}$ which consists only of sets which are not $f$-rapid and which contains a polychromatic set for every $2$-bounded coloring; we will then want to extend $\mathcal{F}$ to an ultrafilter consisting only of sets which are not $f$-rapid. To do this it is enough to have $\mathcal{F}\cap\mathcal{I}=\varnothing$ where $\mathcal{I}$ is an ideal on $\omega$ containing all the $f$-rapid sets.

\begin{prop}
Let $\mathcal{I}$ consist of all sets $A\subseteq\omega$ for which there exists $l,k,N\in\omega$ such that $$(\forall n\geq N)|[l,f(n))\cap A|<g(k,n).$$ Then $\mathcal{I}$ is an ideal that contains every $f$-rapid set.
\end{prop}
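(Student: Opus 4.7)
The plan is to verify the three ideal axioms in turn---downward closure, containment of finite sets, and closure under finite unions---and then to check directly that every $f$-rapid set satisfies the defining sparseness condition. I expect each verification to be essentially one move.

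Downward closure is tautological: if $(l,k,N)$ witnesses $A \in \mathcal{I}$ and $B \subseteq A$, then the same triple witnesses $B \in \mathcal{I}$ because $|[l,f(n)) \cap B| \leq |[l,f(n)) \cap A|$. Finite sets lie in $\mathcal{I}$ just as trivially: take $l$ above $\max A$ so that $[l,f(n)) \cap A$ is empty for every $n$.

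The crux is closure under finite unions, and this is where the recursive inequality $g(k+1,n) > 2g(k,n)$ built into the definition of $g$ does the real work. If $A$ is witnessed by $(l_A,k_A,N_A)$ and $B$ by $(l_B,k_B,N_B)$, I would take $l = \max(l_A,l_B)$, $N = \max(N_A,N_B)$, and $k = \max(k_A,k_B) + 1$; then for $n \geq N$ the cardinality $|[l,f(n)) \cap (A \cup B)|$ is bounded by $g(k_A,n) + g(k_B,n) \leq 2 g(\max(k_A,k_B),n) < g(k,n)$. The only subtle point worth flagging is that raising $l$ to $\max(l_A,l_B)$ only shrinks each of the two counts, so the original witnessing inequalities for $A$ and $B$ still apply.

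For the statement about $f$-rapid sets: if $f \leq^* e_A$, then for all sufficiently large $n$ the $n$-th element of $A$ is at least $f(n)$, so fewer than $n+1$ elements of $A$ lie in $[0,f(n))$. Since $g(1,n) = n+1$, this directly witnesses $A \in \mathcal{I}$ with parameters $(0,1,N)$, where $N$ comes from the almost-domination clause. No genuine obstacle arises anywhere in the argument; the whole point of setting $g(1,n) = n+1$ and making $g(k+1,\cdot)$ dominate $2g(k,\cdot)$ was precisely to make both the $f$-rapid clause and the closure under unions immediate.
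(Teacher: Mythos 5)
Your verifications of downward closure, closure under finite unions (via $g(k_A,n)+g(k_B,n)\leq 2g(\max(k_A,k_B),n)<g(\max(k_A,k_B)+1,n)$, using that $g$ is increasing in its first argument), and the $f$-rapid clause (via $g(1,n)=n+1$ and the parameters $(0,1,N)$) are all correct and are exactly the steps the paper takes.

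However, you have omitted the one verification that is not routine: properness, i.e.\ $\omega\notin\mathcal{I}$. In this setting an ideal must be proper --- the entire purpose of the proposition is that a filter meeting every $D_{l,k,N}$ can be extended to an ultrafilter \emph{disjoint} from $\mathcal{I}$, which is vacuously impossible if $\mathcal{I}=\mathcal{P}(\omega)$. Properness is also the only clause whose proof uses the actual definition of $f$: one must note that $|[l,f(n))\cap\omega|=f(n)-l$ and that $f$ was chosen to eventually dominate the map $n\mapsto g(k,n)+l$ for each fixed $k$ and $l$, so that no triple $(l,k,N)$ can witness $\omega\in\mathcal{I}$. Your list of ``the three ideal axioms'' replaces this with containment of finite sets, which (while true and consistent with the paper's convention) is trivial and does not substitute for properness. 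Add the sentence checking $\omega\notin\mathcal{I}$ and the proof is complete and identical in approach to the paper's.
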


\begin{proof}
First, if $A$ is $f$-rapid then since $n+1\leq g(1,n)$ taking $l=0$ and $k=1$ witnesses $A\in\mathcal{I}$. Clearly $\mathcal{I}$ is closed under subsets. To see that $\omega\not\in\mathcal{I}$, just notice that $|[l,f(n))\cap\omega|=f(n)-l$ and we defined $f$ so that $g(k,n)+l\leq f(n)$ for sufficiently large $n$. Closure of $\mathcal{I}$ under unions follows from the fact that $2g(k,n)<g(k+1,n)$ for every $n$ and $k$.
\end{proof}

We now build a filter $\mathcal{F}$ disjoint from $\mathcal{I}$ and containing a polychromatic set for each $2$-bounded coloring. We generate $\mathcal{F}$ from a \emph{tower} of sets not in $\mathcal{I}$. Recall that a tower is a sequence $\langle T_i:i<\lambda\rangle$ of subsets of $\omega$ with $i<j$ implying that $T_j\subseteq^* T_i$. 

\begin{prop}\label{richnormal}
If $A\not\in I$ and $\chi:[\omega]^2\rightarrow\omega$ is $2$-bounded then there is a normal $B\subseteq A$ with $B\not\in I$.
\end{prop}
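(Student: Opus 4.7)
The plan is to build $B$ as the union of an increasing chain $X_0 \subseteq X_1 \subseteq \cdots \subseteq A$ of finite normal sets, using Proposition \ref{buildNrm} at each step to adjoin a sufficiently large normal block inside a prescribed interval. To arrange $B \notin \mathcal{I}$, fix an enumeration $\{(l_i, k_i) : i \in \omega\}$ of $\omega \times \omega$ in which every pair appears infinitely often.

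At stage $i$ suppose $X_{i-1}$ is built and has size $p_{i-1}$, and set $l_i' = \max\{l_i, \max(X_{i-1}) + 1\}$. The key growth inequality $g(k+1,n) > \Nrm(k,g(k,n))$ from the construction of $g$, combined with monotonicity of $\Nrm$ and $g$ in their arguments, gives $g(k', n) > \Nrm(p_{i-1}, g(k_i, n))$ whenever $k' := \max(p_{i-1}, k_i) + 1$. Since $A \notin \mathcal{I}$, there are infinitely many $n$ with $|A \cap [l_i', f(n))| \geq g(k', n)$, so I pick $N_i \geq i$ witnessing this; then Proposition \ref{buildNrm} applied to $X_{i-1}$ and $Z = A \cap [l_i', f(N_i))$ with parameter $n = g(k_i, N_i)$ produces $Y_i \subseteq Z$ of size at least $g(k_i, N_i)$ such that $X_i := X_{i-1} \cup Y_i$ is normal.

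Let $B = \bigcup_i X_i$. Clearly $B \subseteq A$, and $B$ is normal since any violation of normality involves only four elements and hence lies inside some $X_j$. To verify $B \notin \mathcal{I}$, fix $l, k, N$ and choose $i \geq N$ with $(l_i, k_i) = (l, k)$; then $Y_i \subseteq [l_i', f(N_i)) \subseteq [l, f(N_i))$, whence $|B \cap [l, f(N_i))| \geq |Y_i| \geq g(k, N_i)$ while $N_i \geq i \geq N$, exactly as required.

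The only delicate point is matching the $\Nrm$-input demanded by Proposition \ref{buildNrm} against the $g$-quantified amount of $A$-mass available in $[l_i', f(N_i))$; this is precisely what the recursive definition of $g$ was set up to handle, so the choice of $k'$ above is the main thing to get right. Beyond this bookkeeping, the construction is a routine iteration of Proposition \ref{buildNrm}.
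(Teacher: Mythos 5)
Your proof is correct and takes essentially the same route as the paper's: reduce to a one-step extension in which $A\not\in\mathcal{I}$ supplies an interval $[l,f(n))$ containing more than $\Nrm(p,g(k,n))$ points of $A$, apply Proposition \ref{buildNrm} to adjoin a normal block of size $g(k,n)$, and iterate over an enumeration of the pairs $(l,k)$. Your choice $k'=\max(p,k)+1$ is in fact a slightly more careful version of the paper's bound $g(k+1,n)>\Nrm(p,g(k,n))$, which tacitly needs the same monotonicity adjustment.
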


\begin{proof}
It is enough to show that given a finite $X\subseteq\omega$ on which $\chi$ is normal and given $N,k,l$ we may find $n\geq N$ and $Y\subseteq A$ with $X\cup Y$ normal and $$[l,f(n))\cap Y\geq g(k,n)$$ for then $B$ may be constructed by a straightforward induction. Let $p=|X|$. Because $A\not\in\mathcal{I}$ there is some $n\geq N$ such that $|[l,f(n))\cap A|\geq g(k+1,n)>\Nrm(p,g(k,n))$. By Proposition \ref{buildNrm} there is $Y\subseteq[l,f(n))\cap A$ with $|Y|\geq g(k,n)$ and $X\cup Y$ is normal.
\end{proof}

\begin{prop}\label{richpoly}
If $\chi$ a $2$-bounded coloring and $A\not\in I$ is a normal set then there is a set $B\subseteq A$ which is polychromatic and so that $B\not\in\mathcal{I}$.
\end{prop}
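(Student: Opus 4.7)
The plan is to mimic the proof of Proposition \ref{richnormal} but with Proposition \ref{buildh} in place of Proposition \ref{buildNrm}, so as to build a polychromatic subset of $A$ one finite block at a time while keeping the residual $E$-set outside of $\mathcal{I}$ at every stage.

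First I would observe that $B\subseteq \omega$ fails to belong to $\mathcal{I}$ if and only if for every triple $(l,k,N)\in\omega^3$ there is some $n\geq N$ with $|[l,f(n))\cap B|\geq g(k,n)$. So I would fix an enumeration $\{(l_s,k_s,N_s):s\in\omega\}$ of all such triples and build a nested sequence of finite polychromatic sets $\emptyset=X_0\subseteq X_1\subseteq\cdots\subseteq A$ such that at each stage $A\cap E(X_s)\not\in\mathcal{I}$ and such that $X_{s+1}\setminus X_s\subseteq[l_s,f(n_s))$ with $|X_{s+1}\setminus X_s|\geq g(k_s,n_s)$ for some $n_s\geq N_s$. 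The desired $B$ will then be $\bigcup_s X_s$.

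The key inductive step is the passage from $X_s$ to $X_{s+1}$. Set $p=|X_s|$ and $k=\max(p,k_s)$. Using that $A\cap E(X_s)\not\in\mathcal{I}$, I can find $n\geq N_s$ with
\[
|[l_s,f(n))\cap A\cap E(X_s)|\;\geq\; g(k+1,n).
\]
By the recursive definition of $g$ we have $g(k+1,n)>h(k,g(k,n))$, and by the monotonicity of $h$ and $g$ in their first arguments this gives $g(k+1,n)>h(p,g(k_s,n))$. Then Proposition \ref{buildh}, applied inside $Z=[l_s,f(n))\cap A\cap E(X_s)$ with ideal $\mathcal{I}$ and target size $g(k_s,n)$, produces $Y_s\subseteq Z$ with $|Y_s|\geq g(k_s,n)$, with $X_s\cup Y_s$ polychromatic, and with $A\cap E(X_s\cup Y_s)\not\in\mathcal{I}$. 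I set $X_{s+1}=X_s\cup Y_s$ and $n_s=n$. The base case is immediate since $E(\emptyset)=\omega$ and $A\not\in\mathcal{I}$ by hypothesis.

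Finally I would verify that $B=\bigcup_s X_s$ works: it is a subset of $A$, it is polychromatic because polychromaticity is a property of finite subsets and witnessed by each $X_s$, and for every triple $(l,k,N)=(l_s,k_s,N_s)$ we have $|[l,f(n_s))\cap B|\geq |X_{s+1}\setminus X_s|\geq g(k,n_s)$ with $n_s\geq N$, so $B\not\in\mathcal{I}$. The only real obstacle is coupling the growth rate in Proposition \ref{buildh} with the definition of $\mathcal{I}$, and that is precisely what the relation $g(k+1,n)>h(k,g(k,n))$ baked into the definition of $g$ was designed to handle.
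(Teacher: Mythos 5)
Your proposal is correct and is exactly the argument the paper intends: its proof of this proposition is the single sentence that it is ``just like Proposition \ref{richnormal} but appealing to Proposition \ref{buildh} instead of Proposition \ref{buildNrm},'' and you have carried out that induction in full, maintaining the extra invariant $A\cap E(X_s)\not\in\mathcal{I}$. Your choice $k=\max(p,k_s)$ together with monotonicity of $g$ and $h$ also cleanly handles the mismatch between $|X_s|$ and the first argument of $g$ that the paper's sketch of Proposition \ref{richnormal} glosses over.
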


\begin{proof}
This is just like Proposition \ref{richnormal} but appealing to Proposition \ref{buildh} instead of Proposition \ref{buildNrm}.
\end{proof}

\begin{lemma}\label{richtower}
Assume $\MA$. If $\langle T_i:i<\lambda\rangle$ is a tower of sets with each $T_i\not\in\mathcal{I}$, there is $T_\lambda\not\in\mathcal{I}$ such that $T_\lambda\subseteq^*T_i$ for all $i<\lambda$.
\end{lemma}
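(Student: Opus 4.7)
The plan is to prove Lemma \ref{richtower} with a Mathias-style $\sigma$-centered forcing and an application of $\MA$, using an extra family of dense sets to ensure that the generic pseudo-intersection avoids $\mathcal{I}$. Let $\mathbb{P}$ have conditions $(s,F)$ with $s\in[\omega]^{<\omega}$ and $F\in[\lambda]^{<\omega}$, ordered by $(s',F')\leq(s,F)$ iff $s\subseteq s'$, $F\subseteq F'$, and $s'\setminus s\subseteq\bigcap_{i\in F}T_i$. Any two conditions sharing the first coordinate $s$ amalgamate to $(s,F_1\cup F_2)$, so $\mathbb{P}$ is $\sigma$-centered. The key preliminary observation is that since the tower is $\subseteq^*$-decreasing we have $\bigcap_{i\in F}T_i=^*T_{\max F}$, and since $\mathcal{I}$ is closed under finite changes (immediate from its definition by adjusting the parameter $l$), $\bigcap_{i\in F}T_i\notin\mathcal{I}$ for every finite $F\subseteq\lambda$.

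For the dense sets, first set $D^{\mathrm{tow}}_i=\{(s,F):i\in F\}$ for each $i<\lambda$; these are trivially dense. Next, for each triple $(l,k,m)\in\omega^3$, let
\[
D^{\mathrm{rich}}_{l,k,m}=\{(s,F):\exists n\geq m,\ |[l,f(n))\cap s|\geq g(k,n)\}.
\]
The density of $D^{\mathrm{rich}}_{l,k,m}$ is the main step of the proof. Given $(s,F)$, put $B=\bigcap_{i\in F}T_i$. Since $B\notin\mathcal{I}$, the tuple $(l,k,m)$ fails to witness $B\in\mathcal{I}$, so there are infinitely many $n$ with $|[l,f(n))\cap B|\geq g(k,n)$. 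Pick such an $n\geq m$ and form $(s\cup([l,f(n))\cap B),F)$; this is an extension lying in $D^{\mathrm{rich}}_{l,k,m}$.

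Under $\MA$, since $\lambda<\mathfrak{c}$ (as is the case in the iterative ultrafilter construction in which this lemma is applied), we obtain a filter $G$ meeting each of the $|\lambda|+\aleph_0$ dense sets above. Let $T_\lambda=\bigcup\{s:(\exists F)\,(s,F)\in G\}$. Meeting $D^{\mathrm{tow}}_i$ via some $(s_0,F_0)\in G$ with $i\in F_0$ yields $T_\lambda\setminus s_0\subseteq T_i$, whence $T_\lambda\subseteq^* T_i$. Meeting every $D^{\mathrm{rich}}_{l,k,m}$ keeps $T_\lambda$ out of $\mathcal{I}$: if $(l,k,N)$ witnessed $T_\lambda\in\mathcal{I}$, then a condition $(s,F)\in G\cap D^{\mathrm{rich}}_{l,k,N}$ would deliver $n\geq N$ with $|[l,f(n))\cap T_\lambda|\geq|[l,f(n))\cap s|\geq g(k,n)$, a contradiction. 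The only essentially non-routine ingredient is the closure of $\mathcal{I}$ under finite modifications combined with the tower structure to guarantee $\bigcap_{i\in F}T_i\notin\mathcal{I}$; everything else is the standard $\MA$ / $\sigma$-centered pseudo-intersection argument.
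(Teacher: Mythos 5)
Your proof is correct and follows essentially the same route as the paper: both apply $\MA$ to the standard tower-extension forcing with one family of dense sets securing $T_\lambda\subseteq^*T_i$ and another, indexed by $(l,k,N)$, whose density (using that the side-condition set is not in $\mathcal{I}$) keeps $T_\lambda$ out of $\mathcal{I}$. The only difference is cosmetic --- you carry a finite set of tower indices rather than a single tower member as the side condition, which costs you the small (correctly handled) extra remark that $\bigcap_{i\in F}T_i=^*T_{\max F}\notin\mathcal{I}$.
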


\begin{proof}
The usual forcing notion $\mathbb{P}$ to extend a tower applies; conditions are pairs $\langle s,A\rangle$ where $s$ is a finite subset of $\omega$ and $A$ belongs to the tower. We order by setting $\langle s',A'\rangle\leq\langle s,A\rangle$ exactly when $s'$ is an end extension of $s$, $A'\setminus\max(s')\subseteq A$ and $s'\setminus s\subseteq A$. The dense sets come in two flavors; first for each $T_i$ consider the dense set $C_i$ of conditions $\langle s,A\rangle$ for which $A\setminus\max(s)$ is a subset of $T_i$. Second, for each $l,k,N\in\omega$ take the dense set $D_{l,k,N}$ of conditions $\langle s,A\rangle$ for which there is some $n\geq N$ with $|s\cap [l,f(n))|\geq g(k,n)$. If $G$ is a filter on $\mathbb{P}$ intersecting each $C_i$ and $D_{l,k,N}$ then $T_\lambda$ may be obtained as the union of all the $s$ such that some $\langle s,A\rangle$ belongs to $G$.
\end{proof}

Putting all the ingredients together to construct a $\mathcal{U}$ which is rainbow Ramsey and not rapid is now routine. We enumerate all $2$-bounded colorings $\langle\chi_i:i<2^\omega\rangle$ and construct a tower of sets $\langle T_i:i<2^\omega\rangle$ not in $\mathcal{I}$ such that each $T_i$ is polychromatic for $\chi_i$. Given an initial segment of such a tower, Lemma \ref{richtower} applies to extend it by a single set $A$ and then Lemma \ref{richnormal} followed by Lemma \ref{richpoly} apply to refine this extension to a polychromatic set not in $\mathcal{I}$. With the tower constructed, the filter it generates consists of sets not in $\mathcal{I}$, and this filter can be extended to an ultrafilter $\mathcal{U}$ which is disjoint from $\mathcal{I}$ and is thus not rapid.

\subsection{A weakly selective ultrafilter which is not rainbow Ramsey} \label{wsnotrR}

In this subsection we use $\MA$ to construct a weakly selective ultrafilter $\mathcal{U}$ which is not rainbow Ramsey. First we observe that rainbow Ramsey ultrafilters contain polychromatic sets for colorings with bounds higher than $2$.

\begin{prop}
Suppose that $\mathcal{V}$ is a rainbow Ramsey ultrafilter, $k\in\omega$ and $\chi:[\omega]^2\rightarrow\omega$ is a $k$-bounded coloring. Then there is $A\in\mathcal{U}$ polychromatic for $\chi$.
\end{prop}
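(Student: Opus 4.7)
The plan is to reduce the $k$-bounded case to $\binom{k}{2}$ many applications of the defining property of rainbow Ramsey ultrafilters and then take a finite intersection. Since ultrafilters are closed under finite intersection, the resulting set will belong to $\mathcal{V}$. The key observation is that if two distinct pairs $P, Q \in [\omega]^2$ satisfy $\chi(P) = \chi(Q) = c$, then within any fixed enumeration of $\chi^{-1}(c)$ they occupy two distinct positions $i < j$ with $i, j \leq k$; there are only $\binom{k}{2}$ possibilities for this pair of positions, and each can be isolated by a separate $2$-bounded auxiliary coloring.

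First I would fix, for each color $c$ in the range of $\chi$, an arbitrary enumeration $\chi^{-1}(c) = \{P^c_1, \ldots, P^c_{m_c}\}$ with $m_c \leq k$. For each pair $1 \leq i < j \leq k$ I would define an auxiliary coloring $\chi_{ij} \colon [\omega]^2 \to \omega$ as follows: if $P = P^c_l$ and $l \in \{i, j\}$, let $\chi_{ij}(P)$ be a color coding the triple $(c, i, j)$; otherwise let $\chi_{ij}(P)$ be a fresh color used nowhere else in the range of $\chi_{ij}$. Each such $\chi_{ij}$ is $2$-bounded, since the special color for $(c, i, j)$ has at most two preimages (namely $P^c_i$ and $P^c_j$, when they exist), and every fresh color has exactly one preimage; the codomain is countable and so can be coded into $\omega$.

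Next I would apply the rainbow Ramsey property of $\mathcal{V}$ to each $\chi_{ij}$ to obtain $A_{ij} \in \mathcal{V}$ polychromatic for $\chi_{ij}$, and set $A = \bigcap_{1 \leq i < j \leq k} A_{ij}$; this is a finite intersection of members of $\mathcal{V}$, hence $A \in \mathcal{V}$. To verify that $A$ is polychromatic for $\chi$, suppose for contradiction that distinct $P, Q \in [A]^2$ satisfy $\chi(P) = \chi(Q) = c$. Then $P = P^c_i$ and $Q = P^c_j$ for some $i \neq j$; assuming $i < j$, both $P$ and $Q$ lie in $A \subseteq A_{ij}$, while $\chi_{ij}(P) = \chi_{ij}(Q) = (c, i, j)$, contradicting that $A_{ij}$ is polychromatic for $\chi_{ij}$.

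There is no serious obstacle here; the argument is a bookkeeping reduction based on the fact that $k$-boundedness restricts same-color collisions to $\binom{k}{2}$ position-type classes, each of which can be probed by a $2$-bounded coloring. The only mild care required is to arrange the codomain of each $\chi_{ij}$ so that the resulting coloring is manifestly $2$-bounded and codable into $\omega$, which is routine.
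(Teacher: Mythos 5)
Your proof is correct and takes essentially the same route as the paper: the paper likewise fixes an enumeration of each color class, defines for each pair of positions $\{i,j\}\in[k]^2$ a $2$-bounded auxiliary coloring identifying the $i$th and $j$th members of each class, and intersects the resulting $\binom{k}{2}$ polychromatic sets from the ultrafilter. Your write-up is in fact slightly more careful, handling classes of size less than $k$ directly rather than assuming each has size exactly $k$.
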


\begin{proof}
To simplify the argument we assume $|\chi^{-1}[n]|=k$ for each $n\in\omega$. Let $\{a^n_0,\ldots a^n_{k-1}\}$ enumerate $\chi^{-1}[n]$. For each possible $\{i,j\}\in[k]^2$ let $\chi_{i,j}$ be a two bounded coloring with $\chi(a^n_i)=\chi(a^n_j)$. Then for each $i,j$ we find $A_{i,j}\in\mathcal{V}$ polychromatic for $\chi_{i,j}$. The intersection of all of the finitely many $A_{i,j}$ yields a set in $\mathcal{V}$ which is polychromatic for $\chi$.
\end{proof}

We will define $\mathcal{U}$ to be an ultrafilter on $[\omega]^2$ rather than on $\omega$. We think of $[\omega]^2$ as the set of edges in the complete graph whose set of vertices is $\omega$. Define a coloring $\chi$ by setting $\chi(\{a,b\},\{c,d\})=\{a,b,c,d\}$. Notice that $\chi$ is $4$-bounded. 

Let $\mathcal{I}$ be the collection of $X\subseteq[\omega]^2$ for which there exists an $N$ so that $N\leq |A|$ implies $[A]^2\not\subseteq X$.

\begin{prop} \label{polyIdeal}
The set $\mathcal{I}$ is an ideal containing every set which is polychromatic for $\chi$.
\end{prop}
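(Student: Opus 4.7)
The plan is to verify the two required properties separately: first that $\mathcal{I}$ is genuinely an ideal (containing finite sets, closed under subsets, and closed under finite unions), and second that any $X$ polychromatic for $\chi$ belongs to $\mathcal{I}$.

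For the ideal properties, closure under subsets is immediate: if $X \in \mathcal{I}$ is witnessed by $N$ and $X' \subseteq X$, then the same $N$ witnesses $X' \in \mathcal{I}$. Any finite $X$ belongs to $\mathcal{I}$ because $|[A]^2| = \binom{|A|}{2}$ exceeds $|X|$ once $|A|$ is large enough, so $[A]^2 \not\subseteq X$. The substantive step is closure under finite unions, where I would invoke the finite Ramsey theorem. Given $X_1, X_2 \in \mathcal{I}$ with respective witnesses $N_1, N_2$, take $N$ to be the Ramsey number $R(N_1, N_2)$. If $A \subseteq \omega$ satisfies $|A| \geq N$ and $[A]^2 \subseteq X_1 \cup X_2$, color each edge $e \in [A]^2$ by whether or not $e \in X_1$; Ramsey's theorem produces either a $B \subseteq A$ of size $N_1$ with $[B]^2 \subseteq X_1$ or one of size $N_2$ with $[B]^2 \subseteq X_2$, each contradicting the choice of $N_1$ or $N_2$.

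For the polychromatic containment, my claim is that $N = 4$ witnesses $X \in \mathcal{I}$ for every $X$ polychromatic for $\chi$. Indeed, suppose for contradiction that $A \subseteq \omega$ with $|A| \geq 4$ has $[A]^2 \subseteq X$; pick any four-element subset $\{a,b,c,d\} \subseteq A$. The complete graph on $\{a,b,c,d\}$ admits three distinct perfect matchings, namely
\[
\bigl\{\{a,b\},\{c,d\}\bigr\},\quad \bigl\{\{a,c\},\{b,d\}\bigr\},\quad \bigl\{\{a,d\},\{b,c\}\bigr\},
\]
each of which is an element of $[X]^2$ and all of which receive the common $\chi$-value $\{a,b,c,d\}$. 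This contradicts polychromaticity of $X$.

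Neither step is really an obstacle; the only nontrivial ingredient is the finite Ramsey theorem used to handle union closure, while the polychromatic part reduces to the elementary count that $K_4$ has three perfect matchings.
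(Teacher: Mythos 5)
Your proof is correct and takes essentially the same route as the paper's: closure under finite unions via the finite Ramsey theorem, and the observation that the perfect matchings of $K_4$ on any four vertices of $A$ all receive the common color $\{a,b,c,d\}$, so $N=4$ witnesses membership of every polychromatic set in $\mathcal{I}$. The only difference is that you spell out the Ramsey-number bookkeeping and the finite-set case that the paper leaves implicit.
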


\begin{proof}
Since $\chi(\{a,b\},\{c,d\})=\chi(\{a,d\},\{b,c\})$ no polychromatic set can contain an $[A]^2$ where $A$ has size at least $4$.

It is clear that $\mathcal{I}$ is closed under subsets. That $\mathcal{I}$ is closed under finite unions follows from the finite monochromatic Ramsey theorem. If $X=Y_0\cup\ldots Y_n$ contains arbitrarily large complete graphs, then so does $Y_i$ for some $i$.
\end{proof}

Let $\mathcal{P}=\{P_n:n\in\omega\}$ be a partition of $[\omega]^2$. We say that $X\subseteq[\omega]^2$ is a weak $\mathcal{P}$-selecter if $X\cap P_n$ is finite for each $n$. To construct our ultrafilter it suffices to build a filter $\mathcal{F}$ disjoint from $\mathcal{I}$ which for each $\mathcal{P}$ either contains some $P_n$ or contains a weak $\mathcal{P}$-selecter for each $\mathcal{P}$. 

\begin{lemma} \label{ramseyTower}
Assume $\MA$. If $\langle T_i:i<\lambda\rangle$ is a tower of subsets of $[\omega]^2$ with each $T_i\not\in\mathcal{I}$, there is $T_\lambda\not\in\mathcal{I}$ such that $T_\lambda\subseteq^*T_i$ for all $i<\lambda$.
\end{lemma}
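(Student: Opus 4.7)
The plan is to adapt the forcing construction of Lemma~\ref{richtower} essentially verbatim, swapping out the dense sets that controlled $f$-rapidity for ones that control containment of arbitrarily large complete graphs. I would use the poset $\mathbb{P}$ whose conditions are pairs $\langle s,T\rangle$ with $s$ a finite subset of $[\omega]^2$ and $T$ a member of the given tower, ordered by $\langle s',T'\rangle\leq\langle s,T\rangle$ exactly when $s\subseteq s'$, every pair of $s'\setminus s$ has both vertices strictly larger than every vertex appearing in $s$, $s'\setminus s\subseteq T$, and $T'\setminus\max(s')\subseteq T$ (interpreting $\max(s')$ as the largest vertex occurring in any pair of $s'$). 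As in the original argument, two conditions sharing an $s$-part are compatible by $\subseteq^*$-directedness of the tower, so $\mathbb{P}$ is ccc.

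Two families of dense sets suffice. For each $i<\lambda$ I would take $C_i=\{\langle s,T\rangle:T\setminus\max(s)\subseteq T_i\}$; density of $C_i$ is verified just as in Lemma~\ref{richtower}, using $\subseteq^*$-directedness of the tower to pick a $T'$ almost contained in both $T$ and $T_i$ and then extending $s$ past the finitely many exceptional points. For each $N\in\omega$ I would take $D_N=\{\langle s,T\rangle:[B]^2\subseteq s\text{ for some }B\subseteq\omega\text{ with }|B|\geq N\}$. An $\MA$-generic filter $G$ meeting all these dense sets then produces $T_\lambda=\bigcup\{s:(\exists T)\,\langle s,T\rangle\in G\}$; intersecting each $C_i$ forces $T_\lambda\subseteq^* T_i$, while intersecting each $D_N$ forces $T_\lambda$ to contain $[B]^2$ for arbitrarily large $B$, so $T_\lambda\notin\mathcal{I}$.

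The only nonroutine step is the density of $D_N$, and this is precisely where the hypothesis $T\notin\mathcal{I}$ enters. By Proposition~\ref{polyIdeal} the set $T$ contains $[B]^2$ for arbitrarily large $B$, and since only finitely many vertices occur in $s$ I can choose such a $B$ of size at least $N$ with $\min(B)$ strictly above every vertex appearing in any pair of $s$; then $\langle s\cup[B]^2,T\rangle$ lies in $D_N$ and refines $\langle s,T\rangle$. All remaining verifications, namely ccc-ness of $\mathbb{P}$, density of the $C_i$, and reading off $T_\lambda\subseteq^* T_i$ from a condition in $G\cap C_i$, proceed verbatim as in the proof of Lemma~\ref{richtower}.
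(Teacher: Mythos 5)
Your proposal is correct and follows the same route as the paper: the paper likewise applies $\MA$ to the standard tower-extension forcing with the dense sets $C_i$ and, for each $N$, the sets $D_N$ of conditions whose stem contains some $[B]^2$ with $|B|\geq N$, with density of $D_N$ coming from $T\notin\mathcal{I}$ exactly as you describe.
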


\begin{proof}
As in Lemma \ref{richtower} we apply $\MA$ to the usual forcing to extend a tower. Our dense sets again come in two flavors; $C_i$ consists of those $\langle s,X\rangle$ with $A\setminus \max(s)$ a subset of $T_i$. For each $N$ we let $D_N$ be those $\langle s,X\rangle$ with some $[A]^2\subseteq s$ with $N\leq |A|$. Given $G\subseteq\mathbb{P}$ a generic filter intersecting each $C_i$ and $D_N$ we may define $T_\lambda$ as the union of all the $s$ such that there exists $X$ with $\langle s,X\rangle\in G$.
\end{proof}

\begin{lemma} \label{ramseySelect}
Suppose $X\not\in\mathcal{I}$ and $\mathcal{P}=\{P_n:n\in\omega\}$ is a partition of $[\omega]^2$. Either some $X\cap P_n\not\in\mathcal{I}$ or there is $Y\subseteq X$ a weak $P$-selecter with $Y\not\in\mathcal{I}$.
\end{lemma}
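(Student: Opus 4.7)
The plan is to suppose that every $X\cap P_n$ lies in $\mathcal{I}$ and construct the weak $\mathcal{P}$-selecter $Y$ directly as a disjoint-ish union of complete graphs sitting inside $X$, arranging matters so that each $P_n$ captures only finitely many of these complete graphs.

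The main observation I would use is that $\mathcal{I}$ is well-behaved under subtraction in the following sense: since $\mathcal{I}$ is closed under subsets and finite unions (Proposition \ref{polyIdeal}), for any $Z\in\mathcal{I}$ and $X\notin\mathcal{I}$ we have $X\setminus Z\notin\mathcal{I}$, for otherwise $X=(X\setminus Z)\cup(X\cap Z)$ would be a finite union of members of $\mathcal{I}$. Under our assumption that each $X\cap P_n\in\mathcal{I}$, this gives $X\setminus\bigcup_{n<k}P_n\notin\mathcal{I}$ for every $k\in\omega$.

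With this in hand, I would recursively pick, for each $k\in\omega$, a finite set $A_k\subseteq\omega$ with $|A_k|\geq k$ and $[A_k]^2\subseteq X\setminus\bigcup_{n<k}P_n$; such an $A_k$ exists precisely because $X\setminus\bigcup_{n<k}P_n\notin\mathcal{I}$. Then set $Y=\bigcup_{k\in\omega}[A_k]^2$. By construction $Y\subseteq X$, and $Y\notin\mathcal{I}$ since it contains $[A_k]^2$ with $|A_k|\to\infty$. Moreover, for each fixed $n$ we have $[A_k]^2\cap P_n=\varnothing$ whenever $k>n$, so $Y\cap P_n\subseteq\bigcup_{k\leq n}[A_k]^2$ is a finite union of finite sets, hence finite. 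Thus $Y$ is a weak $\mathcal{P}$-selecter not in $\mathcal{I}$, as required.

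There is no real obstacle here; the only thing to notice is the subtraction property of $\mathcal{I}$, which lets one peel off the first $k$ classes of the partition without losing the hypothesis $X\notin\mathcal{I}$. Once that is seen, the finite Ramsey-style content is already packaged into the definition of $\mathcal{I}$ (arbitrarily large complete subgraphs), and the construction is completely elementary.
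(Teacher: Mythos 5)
Your proof is correct and follows essentially the same route as the paper: both build $Y$ as a union of arbitrarily large complete graphs $[A_k]^2\subseteq X$, each chosen to avoid the finitely many partition classes already encountered, so that every $P_n$ meets $Y$ in a finite set while $Y$ retains arbitrarily large complete subgraphs. The only difference is that where the paper re-runs a finite Ramsey argument (via an auxiliary $2$-coloring of the edges of $X$) to find each complete graph inside $X\setminus(P_0\cup\dots\cup P_{k-1})$, you simply invoke the already-established closure of $\mathcal{I}$ under subsets and finite unions (Proposition \ref{polyIdeal}) to conclude that this set is not in $\mathcal{I}$ --- a slightly cleaner packaging of the same content.
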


\begin{proof}
Assume that each $X\cap P_n\in\mathcal{I}$. Then $Y$ may be built in $\omega$-many stages, adding finitely many points at a time. We start with $Y_0=\varnothing$. At stage $N$ we have $Y_N$, with $P_0,\ldots P_n$ all the members of $\mathcal{P}$ with which $Y_N$ has nonempty intersection, and we form $Y_{N+1}=Y_N\cup [B]^2$ where $N\leq |B|$, $[B]^2\subseteq X$ and $[B]^2\cap P_0\cup\ldots\cup P_n=\varnothing$.

We find $B$ as follows. Let $Q=P_0\cup\ldots\cup P_n$. Then $Q\cap X\in\mathcal{I}$ so we may take $M$ so that $Q\cap X$ contains no $[C]^2$ where $|M|\leq C$. Now define a $2$-coloring with domain $X$ by giving elements of $Q$ color 0 and all other members of $X$ color 1. By the monochromatic Ramsey's theorem there is $B\subseteq\omega$ with $M\leq |B|$ and $[B]^2$ monochromatic; then $[B]^2\subseteq X\setminus Q$ as desired. 
\end{proof}

Now a routine recursion of length continuum will yield an ultrafilter $\mathcal{U}$ which is weakly selective but not rainbow Ramsey. Enumerate all partitions of $[\omega]^2$ as $\langle\mathcal{P}^i:i<2^\omega\rangle$ and construct a tower of sets $\langle T_i:i<2^\omega\rangle$ not in $\mathcal{I}$ such that each $T_i$ is either a weak $\mathcal{P}$-selecter or a subset of some $P^i_n$. Given an initial segment of such a tower, Lemma \ref{ramseyTower} applies to extend it by a single set $Y\not\in\mathcal{I}$ and then Lemma \ref{ramseySelect} applies to refine $Y$ to a $Z\subseteq Y$ which is either a weak $\mathcal{P}$-selecter or a subset of some $P^i_n$. With the tower constructed, the filter it generates consists of sets not in $\mathcal{I}$, and this filter can be extended to a weakly selective ultrafilter $\mathcal{U}$ which is disjoint from $\mathcal{I}$ and is thus not rainbow Ramsey.

\subsection{Rainbow Ramsey ultrafilters are nowhere dense.}

In this subsection we show that every rainbow Ramsey ultrafilter is nowhere dense.

\begin{lemma} \label{rRimpliesweaklynwd}
Suppose $S\subseteq\mathbb{R}$ is countable. Then there is a $2$-bounded coloring $\chi:[S]^2\rightarrow\omega$ so that any set $A$ which is polychromatic for $\chi$ is nowhere dense as a subset of $\mathbb{R}$.
\end{lemma}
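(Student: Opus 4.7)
Without loss of generality $S$ is dense in $\mathbb{R}$, since otherwise every subset of $S$ is already nowhere dense. I plan to construct $\chi$ using only identifications of the form $\chi(\{u,y\}) = \chi(\{v,y\})$---two pairs that share a common vertex $y$---and to assign all remaining pairs in $[S]^2$ distinct colors. Adopting the convention that each such identification is indexed by the maximum of its three vertices in the standard order of $\mathbb{R}$, the whole coloring can be encoded by specifying, for each $y \in S$, a partial involution $\sigma_y$ on $\{u \in S : u < y\}$; the triples $(u, \sigma_y(u), y)$ are then the 3-element sets responsible for all nontrivial color coincidences. Each pair $\{a,b\}$ with $a<b$ appears in at most one identification (the one arising from $\sigma_b$, if any), so $\chi$ is automatically 2-bounded. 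Polychromaticity of $A \subseteq S$ becomes the condition: for every $y \in A$, the set $A \cap (-\infty, y)$ is independent with respect to the matching $\sigma_y$.

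The core of the construction is a bookkeeping step building the $\sigma_y$'s. I would enumerate all triples $(u, v, J)$ with $u, v \in S$ distinct and $J$ a basic rational open interval lying in $(\max(u,v), \infty)$, arranging that each such triple appears infinitely often in the list $(u_n, v_n, J_n)_{n \in \omega}$. At stage $n$, pick a "fresh" $y_n \in S \cap J_n$ not used at any prior stage (possible since $S \cap J_n$ is infinite) and extend $\sigma_{y_n}$ by declaring $\sigma_{y_n}(u_n) = v_n$. Freshness guarantees compatibility with all earlier choices and so preserves 2-boundedness. Consequently, for each pair $\{u,v\}$ the set $D_{u,v} = \{y \in S : y > \max(u,v), \ \sigma_y(u) = v\}$ meets every basic rational open subinterval of $(\max(u,v), \infty)$, i.e.\ $D_{u,v}$ is dense there.

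To conclude, suppose $A \subseteq S$ is polychromatic and dense in some open interval $I$. Polychromaticity gives $A \cap D_{u,v} = \emptyset$ for every pair $\{u,v\} \subseteq A$. Choosing $u < v$ in $A \cap I$ close to $\inf I$, the denseness of $D_{u,v}$ in $(v, \sup I)$ should conflict with the denseness of $A$ there. The main obstacle, and the technical heart of the proof, is that two dense subsets of the countable set $S$ need not intersect; hence the bookkeeping above must be strengthened so that not merely one $D_{u,v}$ but a whole family $\{D_{u',v'} : \{u',v'\} \subseteq A\}$ collectively exhausts $S$ in every neighborhood where $A$ is dense. I expect this to be accomplished by iterating the enumeration so that every $y \in S \cap I$ eventually lies in some $D_{u',v'}$ with $\{u',v'\}$ in every sufficiently small left-neighborhood of $y$; then for any $A$ dense in $I$ and any $y \in A \cap I$, density of $A$ below $y$ produces $\{u',v'\} \subseteq A$ with $y \in D_{u',v'}$, contradicting polychromaticity. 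Making this strengthening precise while preserving 2-boundedness is the step I would expect to require the most care.
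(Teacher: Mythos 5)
There is a genuine gap here, and you have half-identified it yourself: the construction you describe does not work, and the strengthening you sketch at the end does not repair it. In your scheme every color coincidence is a single triple $\{u_n,v_n,y_n\}$ with the tops $y_n$ pairwise distinct (by freshness), so a set $A$ is polychromatic exactly when it contains no such triple, and each $D_{u,v}$ is merely a countable dense subset of $(\max(u,v),\infty)$. One can then build an infinite $A\subseteq S$ that is dense in $\mathbb{R}$ and contains no forbidden triple by a greedy recursion: enumerate the rational intervals $I_k$ and at stage $k$ choose $a_k\in I_k\cap S$ avoiding (i) the finitely many sets $D_{u,v}$ with $\{u,v\}\in[A_k]^2$ and (ii) the finitely many points that would complete a triple whose top already lies in $A_k$ (each $y\in A_k$ is the top of at most one triple). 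Step (i) is possible because the sets $D_{u,v}$ are pairwise disjoint across distinct pairs, so fixing any further pair $\{u',v'\}$ with $\max(u',v')<\inf I_k$, the infinite set $D_{u',v'}\cap I_k$ supplies a legal choice. The same obstruction defeats your proposed strengthening in which the matched pairs of $\sigma_y$ accumulate at $y$ from the left: $\sigma_y$ is a partial matching fixed in advance, and a dense $A$ can be chosen greedily to contain at most one element of each matched pair of each $\sigma_y$ with $y\in A$, so density of $A$ below $y$ never forces a matched pair inside $A$. Forbidding triples pointwise, however cleverly scheduled, cannot force nowhere density so long as finitely many of the sets $D_{u,v}$ never cover an interval's worth of $S$ up to a finite set.

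The missing idea, which is how the paper proceeds, is to make the identification $\chi(p,x)=\chi(q,x)$ hold for a fixed pair $\{p,q\}$ uniformly for \emph{all} $x$ in an entire rational interval $b_n^{p,q}\subseteq c_n$ (with $x$ above $p,q$ in a fixed well-ordering of $S$ of type $\omega$). In your notation this means choosing the matchings to be locally constant in the top variable: $\sigma_x(p)=q$ for every $x\in b_n^{p,q}$, so that $D_{p,q}$ contains all but finitely many points of $S\cap b_n^{p,q}$. Then two points $p,q$ of a polychromatic $A$ already force $A\cap b_n^{p,q}$ to be finite, hence force $A$ to miss a whole subinterval of $c_n$; this is what converts the combinatorial constraint into nowhere density, and it is exactly the feature your $D_{u,v}$ lack. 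The bookkeeping then goes into choosing the intervals $b_n^{p,q}$ (pairwise disjoint for fixed $n$, suitably nested across stages, with finitely many points of $S$ excised at each stage) so that no $x$ receives two conflicting identifications; that is where the care about $2$-boundedness is actually needed. Incidentally, your opening reduction is false as stated: if $S$ is not dense in $\mathbb{R}$ it does not follow that every subset of $S$ is nowhere dense (take $S=\mathbb{Q}\cap(0,1)$); fortunately no density assumption on $S$ is needed for the correct construction.
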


\begin{proof}
Let $\mathcal{C}$ be the collection of all open intervals in $\mathbb{R}$ with rational endpoints. Fix $\prec$ an ordering of $S$ in ordertype $\omega$. When we specify $\chi(q,r)$ we insist that $q\prec r$. Enumerate $\mathcal{C}=\{c_n:n\in\omega\}$.

Before defining $\chi$ we first define sequences $\{S_n:n\in\omega\}$ and $\{b^{p,q}_n:n\in\omega,\{p,q\}\in[S_n]^2\}$ such that
\begin{enumerate}
\item $S\setminus S_n$ is finite.
\item For each $n\in\omega$ the set $\{b^{p,q}_n:\{p,q\}\in[S_n]^2\}$ is a pairwise disjoint collection of elements of $\mathcal{C}$ each of which is a subset of $c_n$.
\item $b^{p,q}_n\cap b^{r,s}_m\not=\varnothing$ and $\{p,q\}\cap\{r,s\}\not=\varnothing$ implies that $m=n$ (and thus $\{p,q\}=\{r,s\}$ by (2)).
\end{enumerate}

Suppose recursively we have defined $S_i$ and $b_i^{p,q}$ for $i<n$. For each $i\leq n$ we define a set $a_i^n\in\mathcal{C}$ and we do this by recursion. Let $a_0^n=c_n$. If there is some $\{p_i^n,q_i^n\}\in[S_i]^2$ with $a_i^n\cap b_i^{p_i,q_i}\not=\varnothing$ then set $a_{i+1}^n=a_i^n\cap b_i^{p_i^n,q_i^n}$; otherwise set $a_{i+1}^n=a_i^n$. Let $S_n=S\setminus\bigcup_{i<n}\{p_i^n,q_i^n\}$. We take $\{b_n^{p,q}:\{p,q\}\in[S_n]^2\}$ to be a pairwise disjoint collection of members of $\mathcal{C}$ so that each $b_n^{p,q}\subseteq a_n^n$.

That completes the definition of the $S_n$ and the $b_n^{p,q}$. It is easy to see that we have (1) and (2). To see that (3) holds notice that by construction if $i<n$ and $a_n^n\cap b_i^{r,s}\not=\varnothing$ we have $a_n^n\subseteq b_i^{r,s}$ and $\{r,s\}=\{p_i^n,q_i^n\}$. But $p_i^n,q_i^n\not\in S_n$ and $b_n^{p,q}\subseteq a_n^n$.

Now we define $\chi$. For $x$ with $p,q\prec x$ and $x\in S\cap b_n^{p,q}$ set $\chi(p,x)=\chi(q,x)=\{p,q,x\}$. For all other pairs we let $\chi(p,x)=\{p,x\}$. Condition (3) in our construction of $b^{p,q}_n$ guarantees that $\chi$ is well-defined and $2$-bounded.

Suppose $A\subseteq S$ is polychromatic for $\chi$. We want to check that $A$ is nowhere dense. We may as well assume $A$ is infinite. For $c\in\mathcal{C}$ we must find $b\in\mathcal{C}$ with $b\subseteq c$ and $b\cap A=\varnothing$. Fix $c_n\in\mathcal{C}$. Since $A$ is infinite and $S_n$ is coinfinite in $S$ we may find distinct $p,q\in A\cap S_n$. Then $\chi(p,x)=\chi(q,x)$ for $x\in b_n^{p,q}$ and thus $b_n^{p,q}\cap A$ is finite since $A$ is polychromatic. Hence $A$ is empty on a subinterval $b_n^{p,q}$ which itself is a subset of $c_n$.
\end{proof}

Let us temporarily call an ultrafilter {\em weakly nowhere dense} if for every injective $f:\omega\rightarrow\mathbb{Q}$ there is an $A\in\mathcal{U}$ with $f(A)$ nowhere dense. In the terminology of Fla\v{s}kov\'{a} \cite{Flaskova} these are the $\mathcal{I}$-friendly ultrafilters, where $\mathcal{I}$ is the nowhere dense ideal. Lemma \ref{rRimpliesweaklynwd} shows that every rainbow Ramsey ultrafilter is weakly nowhere dense. Thus it only remains for us to establish the following.

\begin{lemma}
Every weakly nowhere dense ultrafilter is nowhere dense.
\end{lemma}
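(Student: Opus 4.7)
The plan is to reduce the general nowhere dense condition to the injective case already handled by weak nowhere density, via a small perturbation. Given an arbitrary $f:\omega\rightarrow\mathbb{Q}$, I will build an injective $g:\omega\rightarrow\mathbb{Q}$ with $|g(n)-f(n)|<2^{-n}$ for every $n$, apply the weakly nowhere dense hypothesis to obtain $A\in\mathcal{U}$ with $g(A)$ nowhere dense, and then verify that $f(A)$ is also nowhere dense.

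The construction of $g$ is by straightforward recursion: having defined $g(0),\ldots,g(n-1)$, choose some $\epsilon_n\in\mathbb{Q}$ with $|\epsilon_n|<2^{-n}$ and with $f(n)+\epsilon_n$ different from each previous $g(i)$, and set $g(n)=f(n)+\epsilon_n$. Only finitely many values of $\epsilon_n$ are forbidden at each stage, so such a choice is always possible.

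The crux is the claim that $\overline{f(A)}\subseteq f(A)\cup\overline{g(A)}$. If $x\in\overline{f(A)}\setminus f(A)$, there is a sequence in $A$ whose $f$-values are distinct and converge to $x$; distinctness of the values forces distinctness of the indices, so after passing to a subsequence we may assume $n_k\rightarrow\infty$ while $f(n_k)\rightarrow x$. Since $|g(n_k)-f(n_k)|<2^{-n_k}\rightarrow 0$, also $g(n_k)\rightarrow x$, placing $x$ in $\overline{g(A)}$. If now $\overline{f(A)}$ contained an open interval $(a,b)$, the set $(a,b)\setminus\overline{g(A)}$ would be contained in the countable set $f(A)$, yet it would also be a nonempty open subset of $\mathbb{R}$ (nonempty because the closed nowhere dense set $\overline{g(A)}$ has empty interior and so cannot cover $(a,b)$), hence uncountable---a contradiction. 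Thus $f(A)$ is nowhere dense, as required.

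The only real subtlety is preserving the asymptotic behavior of $f$ on $A$ under the perturbation, which is precisely what the decaying bound $|\epsilon_n|<2^{-n}$ guarantees; without this decay, the closures of $g(A)$ and $f(A)$ could differ in an essential way.
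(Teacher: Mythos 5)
Your proof is correct, and it reaches the conclusion by a genuinely different mechanism than the paper's. Both arguments share the same strategy---replace the arbitrary $f$ by a nearby injection $g$, apply the hypothesis to $g$, and transfer nowhere density back to $f$---but the notion of ``nearby'' and the transfer step differ. The paper first builds a countable family $\mathcal{C}'$ of rational intervals forming a $\pi$-base (every interval contains a member of $\mathcal{C}'$) arranged so that each value $f(n)$ lies in only finitely many members of $\mathcal{C}'$, and then places $g(n)$ inside the intersection of those finitely many members; the resulting tracking property ($f(n)\in t$ implies $g(n)\in t$ for $t\in\mathcal{C}'$) lets one exhibit, inside any given interval, an explicit subinterval $t\in\mathcal{C}'$ missing $g(A)$ and hence missing $f(A)$. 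Your version instead uses the metric: the decaying perturbation $|g(n)-f(n)|<2^{-n}$ yields the closure inclusion $\overline{f(A)}\subseteq f(A)\cup\overline{g(A)}$, and you finish with a cardinality argument (a nonempty open set cannot sit inside the countable set $f(A)$). The paper's route is purely topological and would transfer to maps into more general crowded second countable spaces; yours is shorter and more elementary but leans on the metric of $\mathbb{R}$ and on countability of the range, and its last step is nonconstructive where the paper's produces a witnessing subinterval. One small point worth making explicit in your write-up: you pass freely between ``nowhere dense in $\mathbb{Q}$'' (the ideal used to define nowhere dense ultrafilters) and ``nowhere dense in $\mathbb{R}$''; these agree for subsets of $\mathbb{Q}$ since $\mathbb{Q}$ is dense in $\mathbb{R}$, but the equivalence deserves a sentence.
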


\begin{proof}
We show that given a function $G:\omega\rightarrow\mathbb{R}$ we can find an injective function $F:\omega\rightarrow\mathbb{R}$ such that for any $A\subseteq\omega$, if $F(A)$ is nowhere dense then $G(A)$ is nowhere dense. As before let $\mathcal{C}$ be the collection of all open intervals in $\mathbb{R}$ with rational endpoints.

\medskip

\noindent {\em Claim.} There is $\mathcal{C}'\subseteq\mathcal{C}$ such that for every $c\in\mathcal{C}$ there is a $t\subseteq c$ with $t\in\mathcal{C}'$, and so that each $G(n)$ belongs to finitely many members of $\mathcal{C}$'.

\begin{proof}[Proof of Claim.] Enumerate $\mathcal{C}$ as $\{c_n:n\in\omega\}$. For each $c_n$ let $t_n\subseteq c_n$ be some member of $\mathcal{C}$ that does not contain any of $G(0),\ldots G(n)$. Set $\mathcal{C}'=\{t_n:n\in\omega\}$.
\end{proof}

Now we construct $F$. We do so recursively. Suppose that $F(0),\ldots F(n)$ have already been defined. Let $t_0,\ldots t_m$ be all the members of $\mathcal{C}'$ containing $G(n+1)$. Then $t_0\cap\ldots\cap t_m$ is a nonempty open set. We take $F(n+1)$ to be a member of $t_0\cap\ldots\cap t_m$ different from each of $F(0),\ldots F(n)$.

Obviously $F$ is injective. Notice also that for every $n\in\omega$ and every $t\in\mathcal{C}'$, $F(n)\in t$ whenever $G(n)\in t$. We can use this property to see that $F$ is as desired. For suppose $A\subseteq\omega$ with $F(A)$ nowhere dense. We check that $G(A)$ is nowhere dense. Let $s\in\mathcal{C}$. Since $F(A)$ is nowhere dense there is $u\in\mathcal{C}$ with $u\subseteq s$ so that $F(A)\cap u$ is empty. Take $t\in\mathcal{C}'$ with $t\subseteq u$. Then since $F(A)\cap t$ is empty it follows that $G(A)\cap t$ is empty: if $n\in A$ and $G(n)\in t$ then $F(n)\in t$.
\end{proof}

\subsection{A rainbow Ramsey ultrafilter which is not discrete} \label{discreteUlt}

In this subsection we use $\MA$ to construct  a rainbow Ramsey ultrafilter $\mathcal{U}$ which is not discrete. It is enough to construct $\mathcal{U}$ an ultrafilter on $\mathbb{Q}$ which contains no discrete subset of $\mathbb{Q}$ but which contains a polychromatic set for every $2$-bounded coloring on $\mathbb{Q}$. 

\begin{defn}
Let $A\subseteq\mathbb{Q}$. We define $L^k(A)\subseteq\mathbb{Q}$ by induction on $k\in\omega$.
\begin{enumerate}
\item $L^0(A)=A$.
\item $L^{k+1}(A)$ is the set of $a\in A$ which are limit points of $L^k(A)$.
\end{enumerate}
If there exists $k$ such that $L^{k}(A)=\varnothing$ we set $\CB(A)$ equal to the least such $k$. Otherwise we say $\CB(A)\geq\omega$.
\end{defn}

For $A$ without a perfect kernel $\CB(A)$ is just the usual Cantor-Bendixson rank for finitely ranked sets. Let $\mathcal{I}$ be the collection of all $A\subseteq\Q$ with $\CB(A)<\omega$. The following proposition is well-known but we include a proof since it uses ideas we will need later in the more delicate situation of Proposition \ref{maintainRobust}.

\begin{prop}\label{CBideal}
The set $\mathcal{I}$ contains every discrete subset of $\Q$ and is an ideal. In fact if $\CB(A\cup B)>k+l$ then $\CB(A)>k$ or $\CB(B)>l$.
\end{prop}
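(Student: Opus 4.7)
The plan is to reduce all three assertions to the quantitative ``in fact'' clause. Every discrete $A\subseteq\Q$ (in particular every finite $A$) satisfies $L^1(A)=A\cap D(A)=\varnothing$, where $D$ denotes the topological derivative in $\Q$, since by hypothesis no point of $A$ is a limit of $A$; hence $\CB(A)\le 1<\omega$. Closure of $\mathcal{I}$ under subsets follows from the observation $B\subseteq A\Rightarrow L^k(B)\subseteq L^k(A)$ (a one-line induction using monotonicity of $D$). Closure under finite unions is the contrapositive of the quantitative clause: if $\CB(A)=k$ and $\CB(B)=l$ are both finite, then $L^k(A)=L^l(B)=\varnothing$, so $\CB(A\cup B)\le k+l<\omega$.

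For the quantitative clause I would prove by induction on $n=k+l$ the stronger set-theoretic inclusion
\[ L^{k+l}(A\cup B)\;\subseteq\;\overline{L^k(A)}\cup\overline{L^l(B)}, \]
with closures taken in $\Q$. This immediately gives what is needed: if $L^k(A)=\varnothing$ and $L^l(B)=\varnothing$, then $L^{k+l}(A\cup B)=\varnothing$. The only auxiliary fact required is the standard identity $D(\overline S)=D(S)$ for subsets $S$ of a metric space, which follows by a two-step approximation (approximate a limit of $\overline S$ first by a point of $\overline S$, and if that point is not in $S$ itself, approximate it in turn by a point of $S$). The base case $n=0$ is just $A\cup B\subseteq\overline A\cup\overline B$. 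For the step $n\ge 1$, pick $x\in L^n(A\cup B)$, so $x\in A\cup B$ and $x\in D(L^{n-1}(A\cup B))$; assume $x\in A$, the case $x\in B$ being symmetric. If $k\ge 1$, the inductive hypothesis at $(k-1,l)$ gives $L^{n-1}(A\cup B)\subseteq\overline{L^{k-1}(A)}\cup\overline{L^l(B)}$, so monotonicity of $D$ combined with $D(\overline S)=D(S)$ yields $x\in D(L^{k-1}(A))\cup D(L^l(B))$. Either $x\in A\cap D(L^{k-1}(A))=L^k(A)\subseteq\overline{L^k(A)}$, or $x\in D(L^l(B))\subseteq\overline{L^l(B)}$. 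If instead $k=0$, trivially $x\in A\subseteq\overline{L^0(A)}$.

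The main obstacle is identifying the correct inclusion to induct on: the naive guess $L^{k+l}(A\cup B)\subseteq L^k(A)\cup L^l(B)$ already fails at $(k,l)=(1,0)$ with $A=\{0\}$ and $B=\{1/n:n\ge 1\}$, since $0\in L^1(A\cup B)$ (as a limit of $B$) while $L^1(A)=\varnothing$ and $0\notin B=L^0(B)$. Passing to closures absorbs precisely these ``cross-limit'' phenomena---points of $A$ that acquire $(A\cup B)$-rank from iterated $B$-derivatives without themselves belonging to $B$---and the identity $D(\overline S)=D(S)$ then lets the inductive hypothesis propagate through each application of $D$ without leakage of rank between the $A$- and $B$-sides.
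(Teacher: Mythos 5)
Your proof is correct, but it takes a genuinely different route from the paper's. You reduce everything to the quantitative clause and prove it by a clean induction on $k+l$, establishing the inclusion $L^{k+l}(A\cup B)\subseteq\overline{L^{k}(A)}\cup\overline{L^{l}(B)}$ via the standard facts $D(\overline{S})=D(S)$ and $D(S\cup T)\subseteq D(S)\cup D(T)$ (you use the latter silently when splitting $x\in D(\overline{L^{k-1}(A)}\cup\overline{L^{l}(B)})$ into the two cases; it is worth stating, though it is a one-line $T_1$-space fact). Your diagnosis of why the naive inclusion without closures fails, and why closures absorb exactly the cross-limit points, is exactly right. The paper instead introduces a family of trees $T\subseteq A\cup B$ of finite height whose non-maximal nodes are limits of their infinitely many successors and whose levels are each homogeneous (entirely in $A$ or entirely in $B$); a point of $L^{k+l}(A\cup B)$ roots such a tree of height $k+l$, and pigeonholing its $k+l+1$ levels produces a node with $k$ homogeneous $A$-levels above it (hence in $L^{k}(A)$) or one with $l$ homogeneous $B$-levels above it. Your argument is shorter and purely topological; the paper's is more elaborate precisely because, as it notes, the tree machinery is the template for the more delicate Proposition \ref{maintainRobust}, where the sets being intersected are the $E(X\cup a_i)$ and a closure-based induction would not transfer as directly. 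Both establish the proposition; yours does not set up that later reuse.
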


\begin{proof}
Every discrete subset of $\Q$ belongs to $\mathcal{I}$ since a set is discrete exactly when $\CB(A)<2$.

To show that $\mathcal{I}$ is an ideal, fix $A,B\subseteq\mathbb{Q}$. We show that if $L^{k+l}(A\cup B)\not=\varnothing$ then either $L^k(A)\not=\varnothing$ or $L^l(B)\not=\varnothing$. 

For our purposes a tree $(T,<_T)$ is a partially ordered set so that for each node $x\in T$ the set of predecessors of $x$ is well-ordered by $<$. Let $p_T(x)$ be the set of predecessors of $x$. For each $x\in T$ we let the height of $x$, written $\hgt(x)$, be equal to the order-type of $p_T(x)$. The height of $T$, written $\hgt_T(T)$, is the maximum of the heights of its elements. The $k$th level of $T$ is all $x\in T$ with $\hgt_T(x)=k$.

Now let $\mathcal{T}$ be the collection of all trees $T$ with the following properties:
\begin{enumerate}
\item $T$ has finite height.
\item $T\subseteq A\cup B$.
\item Every $x\in T$ with $\hgt_T(x)<\hgt(T)$ has infinitely many successors. These can be enumerated $\{y_n:n\in\omega\}$ where $\lim_{n<\omega}y_n=x$.
\item \label{uniform} For each $k\leq\hgt(T)$ either every $x\in T$ with $\hgt_T(x)=k$ belongs to $A$, or every $x\in T$ with $\hgt_T(x)=k$ belongs to $B$.
\end{enumerate}

\medskip

If $x\in L^k(A\cup B)$ then by induction on $k$ it may be shown that there is $T\in\mathcal{T}$ with root $x$ and $\hgt_T(T)=k$. 
Thus if there is some $x\in L^{k+l}(A\cup B)$ then there is a $T\in\mathcal{T}$ with root $x$ and $\hgt_T(T)=k+l$. Then $T$ has $k+l+1$ levels so either $k+1$ levels are subsets of $A$ or $l+1$ levels are subsets of $B$. It is easy to see if a node in $A$ has $k$ levels above which are subsets of $A$ then that node belongs to $L^k(A)$. Similarly for $B$ and $l$. 
\end{proof}

There is an unfortunate complication in the argument to come. Unlike our constructions in subsections \ref{growth} and \ref{wsnotrR} we will not be able to generate $\mathcal{U}$ from a tower; there are countable length towers of sets not in $\mathcal{I}$ which cannot be extended by a set not in $\mathcal{I}$.

The key lemma in the construction of $\mathcal{U}$ is the following.

\begin{lemma} \label{discreteMainStep}
Assume $\MA$. Let $\mathcal{S}$ be a filter base with $\mathcal{S}\cap\mathcal{I}=\varnothing$ and $|\mathcal{S}|<2^\omega$ and let $\chi$ be a $2$-bounded coloring on $\mathbb{Q}$. There is a polychromatic $B\subseteq\mathbb{Q}$ such that $B\cap S\not\in\mathcal{I}$ for every $S\in\mathcal{S}$.
\end{lemma}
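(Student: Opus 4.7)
My plan is to apply $\MA$ to a forcing that constructs $B$ as a polychromatic pseudo-intersection of the filter $\mathcal{F}$ generated by $\mathcal{S}$ with the additional property $B \notin \mathcal{I}$. Since $\mathcal{F}$ consists of supersets of members of $\mathcal{S}$, $\mathcal{F} \cap \mathcal{I} = \emptyset$ and $|\mathcal{F}| < 2^\omega$. This reduction suffices: if $B$ pseudo-intersects $\mathcal{F}$ then each $B \setminus S$ is finite and hence of CB rank at most $1$, so Proposition \ref{CBideal} gives $\CB(B) \leq \CB(B \cap S) + 1$; thus $B \notin \mathcal{I}$ forces $B \cap S \notin \mathcal{I}$ for every $S \in \mathcal{S}$.

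Before defining the forcing I would establish an extension lemma for the CB ideal, in the spirit of Propositions \ref{richnormal}--\ref{richpoly} (presumably the upcoming Proposition \ref{maintainRobust}): given $F \notin \mathcal{I}$ and a finite polychromatic set $s$ with $F \subseteq E(s)$, there is a polychromatic $F^* \subseteq F$ with $F^* \notin \mathcal{I}$ containing, at each point $x \in L^k(F^*)$ and for each $k$, a polychromatic tree of height $k$ (in the sense of the trees used in the proof of Proposition \ref{CBideal}) witnessing $x \in L^k(F^*)$. I would prove this by induction on $k$, using Lemma \ref{theideallemma} with $\mathcal{I}$ the CB ideal (which is indeed an ideal by Proposition \ref{CBideal}): at each step, only finitely many one-point polychromatic extensions $s \cup \{a\}$ drop $E(s \cup \{a\}) \cap F$ into $\mathcal{I}$, and avoiding those exceptions permits the tree-children to be chosen polychromatically.

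I would then define $\mathbb{P}$ to have conditions $\langle s, \tau, F\rangle$ where $s$ is a finite polychromatic subset of $\mathbb{Q}$, $\tau$ is a finite labeled rooted forest with nodes in $s$ and labels $\ell(x) \in \omega$ strictly decreasing along edges, and $F \in \mathcal{F}$ satisfies $F \subseteq E(s)$ together with the requirement that $F$ explicitly ``supports'' $\tau$, in that each committed $x \in \tau$ at label $k$ comes equipped with a specified polychromatic $k$-witness-tree inside $F$. Order $\langle s', \tau', F'\rangle \leq \langle s, \tau, F\rangle$ by end-extension of $s$ (with $s' \setminus s \subseteq F$), forest-extension of $\tau$ preserving labels, reverse inclusion of $F$, and extension of witness-trees. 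The dense sets to meet under $\MA$ are $D_F$ for each $F \in \mathcal{F}$ (refining the $F$-coordinate, ensuring pseudo-intersection), $D_k$ for each $k \in \omega$ (requiring some node of $\tau$ to be labeled $k$), and $D_{x, I, n}$ for committed $x$ with $\ell(x) > 0$, rational interval $I \ni x$, and $n \in \omega$ (requiring $\tau$ to have at least $n$ children of $x$ in $I$ labeled $\ell(x) - 1$). Density of $D_k$ and $D_{x, I, n}$ follows from the extension lemma, since the witness-trees inside $F$ at $x$ supply polychromatic extension points with further CB-structure. The generic $B = \bigcup_p s_p$ is polychromatic and pseudo-intersects $\mathcal{F}$, and by induction on $k$ every node labeled $k$ in the generic forest ends up in $L^k(B)$, giving $B \notin \mathcal{I}$.

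The main obstacle I anticipate is the ccc verification of $\mathbb{P}$ (necessary to apply $\MA$): two conditions $\langle s, \tau, F_1\rangle, \langle s, \tau, F_2\rangle$ sharing $\langle s, \tau\rangle$ need to be compatible, but $L^k$-membership of a point is not preserved under intersection of $F$'s that approach that point from different directions, so $F_1 \cap F_2$ need not support $\tau$. The resolution is to encode witness-trees as part of the condition and arrange compatibility as agreement on a common finite initial portion of the witness-trees; a careful formulation (for example, drawing the witness-trees from a fixed countable family indexed by the finite data of $s$ and $\tau$) makes $\mathbb{P}$ in fact $\sigma$-centered over the countably many possible $\langle s, \tau, \text{witness-tree-indices}\rangle$, and the extension lemma applied inside $F_1 \cap F_2$ provides the required common refinement.
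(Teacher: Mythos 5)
Your opening reduction is where the argument breaks. You propose to produce a polychromatic $B\notin\mathcal{I}$ that is a pseudo-intersection of the filter generated by $\mathcal{S}$, i.e.\ $B\subseteq^* S$ for every $S\in\mathcal{S}$. That is strictly stronger than the lemma, and it is false --- already for a countable decreasing $\mathcal{S}$ and without any polychromatic requirement. Take pairwise separated sets $Q_k\subseteq (k+0.1,k+0.9)\cap\mathbb{Q}$ with $\CB(Q_k)=k$, and let $T_n=\bigcup_{k\geq n}Q_k$. Each $Q_k$ is relatively clopen in $T_n$, so $L^j(T_n)\supseteq L^j(Q_k)\not=\varnothing$ whenever $j<k$, and hence $\CB(T_n)\geq\omega$ for every $n$; the $T_n$ form a countable filter base disjoint from $\mathcal{I}$. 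But any $B$ with $B\subseteq^* T_n$ for all $n$ meets each $Q_k$ in a finite set, so every point of $B$ has a neighborhood containing only finitely many points of $B$; thus $B$ is discrete and lies in $\mathcal{I}$. This is precisely the ``unfortunate complication'' flagged in the paragraph preceding the lemma (countable towers of sets outside $\mathcal{I}$ with no extension outside $\mathcal{I}$), and it is why the conclusion is only $B\cap S\notin\mathcal{I}$ for each $S$ rather than $B\subseteq^* S$. Concretely, in your forcing the sets $D_F$ cannot all be dense: a witness-tree for a node $x$ living inside $F_1$ generally has no counterpart inside $F_1\cap F$, and no amount of bookkeeping can repair this, since the statement your forcing is designed to prove is false. (Your auxiliary observation that $\CB(B)\geq\omega$ and $B\setminus S$ finite imply $\CB(B\cap S)\geq\omega$, via Proposition \ref{CBideal}, is correct; the problem is solely that the required $B$ need not exist.)

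The repair requires keeping every $S\in\mathcal{S}$ in play separately instead of absorbing $\mathcal{S}$ into a single shrinking side condition. The paper's proof first extracts a normal $A$ with $A\cap S\notin\mathcal{I}$ for all $S$, then introduces for each finite polychromatic $X$ and each $S$ a relativized rank $L^k_\pol(X,S)$ whose recursion passes to $E(X\cup a)$ at each stage, shows that the property ``$\CB_\pol(X,S)\geq\omega$ for all $S$'' survives all but finitely many one-point extensions uniformly in $S$ (Propositions \ref{startsRobust} and \ref{maintainRobust}), and forces with conditions $\langle X,f\rangle$ in which $f$ records, for each relevant $S$ separately, commitments that specified points of $X$ are limit points of $L^k_\pol(X,S)$. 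The generic $B$ then contains, inside each individual $S$, an accumulation structure of unbounded finite rank, which is exactly what $B\cap S\notin\mathcal{I}$ requires. Your witness-tree machinery is in the right spirit for the density and chain-condition arguments, but it must be indexed by the individual members of $\mathcal{S}$, not by a single member of the generated filter.
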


Using Lemma \ref{discreteMainStep} to construct an appropriate $\mathcal{U}$ is a routine recursion of length continuum. So we turn to proving Lemma \ref{discreteMainStep}. Fix $\mathcal{S}$ a filter base with $\mathcal{S}\cap\mathcal{I}=\varnothing$ and $|\mathcal{S}|<2^\omega$ and fix $\chi:[\mathbb{Q}]^2\rightarrow\omega$ a $2$-bounded coloring. Throughout the rest of this section we assume $\MA$. For $\epsilon>0$ and $a\in\mathbb{Q}$ we let $N_\epsilon(a)$ denote the $\epsilon$-neighborhood around $a$. Fix a well-ordering $\prec$ on $\mathbb{Q}$ of ordertype $\omega$; our definition of normal for subsets $\mathbb{Q}$ is the same as Definition \ref{normDef}, but using $\prec$ instead of $<$.  

\begin{prop}\label{CBnormy}
There is a normal $A\subseteq\mathbb{Q}$ with $A\cap \mathcal{S}\not\in\mathcal{I}$ for each $S\in\mathcal{S}$.
\end{prop}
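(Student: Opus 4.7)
The plan is to realize $A$ as a generic subset of $\mathbb{Q}$ via a ccc forcing to which $\MA$ applies, in the same spirit as Lemmas \ref{richtower} and \ref{ramseyTower}. Conditions of the forcing $\mathbb{P}$ will be pairs $p = \langle s_p, \sigma_p \rangle$, where $s_p$ is a finite normal subset of $\mathbb{Q}$ and $\sigma_p$ is a finite set of \emph{promises} $\langle x, S, k \rangle$ with $x \in s_p$, $S \in \mathcal{S}$, $k \in \omega$, and $x \in L^k(S)$. The intended meaning of such a promise is that $x$ will end up in $L^k(A \cap S)$. I order $p' \leq p$ by $s_p \subseteq s_{p'}$, $\sigma_p \subseteq \sigma_{p'}$, and $s_{p'}$ normal. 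The forcing is ccc because there are only countably many possible $s$-components (finite subsets of $\mathbb{Q}$), so in any uncountable family two conditions share an $s$-component and are compatible via the union of their promise sets.

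I will then introduce two families of dense sets. For each $S \in \mathcal{S}$ and $k \in \omega$, let $G_{S,k}$ be the set of conditions carrying some promise of the form $\langle x, S, k \rangle$; this is dense because $L^k(S)$ is infinite (as $\CB(S) \geq \omega$), so Proposition \ref{buildNrm} lets me choose a $\prec$-late $x \in L^k(S)$ with $s \cup \{x\}$ normal and then attach the new promise. For each triple $\langle x, S, k+1 \rangle$ with $x \in L^{k+1}(S)$ and each $\epsilon > 0$, let $F_{x,S,k+1,\epsilon}$ consist of those conditions that either drop the promise $\langle x, S, k+1 \rangle$ or carry a subordinate promise $\langle y, S, k \rangle$ with $y \in s_p \cap S \cap N_\epsilon(x) \setminus \{x\}$; density holds because $x \in L^{k+1}(S)$ forces $L^k(S) \cap N_\epsilon(x)$ to be infinite, and Proposition \ref{buildNrm} again supplies a $\prec$-late $y$ keeping normality, after which I attach the promise $\langle y, S, k \rangle$. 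Altogether there are $|\mathcal{S}| \cdot \aleph_0 < 2^\omega$ dense sets, so $\MA$ yields a filter $G$ meeting all of them.

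Setting $A = \bigcup_{p \in G} s_p$, the normality of $A$ follows because any four of its elements occur together in a single $s_p$. For each $S \in \mathcal{S}$, a straightforward induction on $k$ shows that every promise $\langle x, S, k \rangle$ appearing somewhere in $G$ is actually fulfilled: at $k = 0$ this is immediate from $x \in A \cap S$, and at level $k+1$ the sets $F_{x,S,k+1,\epsilon}$ produce for every $\epsilon > 0$ a point $y \in A \cap S \cap N_\epsilon(x) \setminus \{x\}$ that carries its own fulfilled promise $\langle y, S, k \rangle$, putting $y$ in $L^k(A \cap S)$ by induction. Combined with meeting every $G_{S,k}$, this gives $L^k(A \cap S) \neq \emptyset$ for all $k$, hence $A \cap S \notin \mathcal{I}$. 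The main obstacle is verifying the two density claims, namely that one can always add a $\prec$-late witness while preserving normality; this is precisely what Proposition \ref{buildNrm} is built for, provided one has verified that the relevant pool $L^k(S) \cap N_\epsilon(x)$ is infinite whenever we need to sample from it.
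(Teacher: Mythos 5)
Your overall strategy---applying $\MA$ to the poset of finite normal subsets of $\mathbb{Q}$, with dense sets that populate each $L^k(S)$ and, around each promised point of $L^{k+1}(S)$, each $\epsilon$-neighborhood intersected with $L^k(S)$---is exactly the paper's, and your explicit promise bookkeeping mirrors what the paper does in the more delicate Lemma \ref{discreteMainStep}. The ccc argument, the normality of $A$ (any two pairs from $A$ lie in a common $s_p$), the counting of dense sets, and the density computations (finitely many points excluded by normality versus infinitely many candidates in $L^k(S)\cap N_\epsilon(x)$) are all fine.

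The gap is in the sets $F_{x,S,k+1,\epsilon}$. As you define them---conditions that either do not carry the promise $\langle x,S,k+1\rangle$ or carry a subordinate promise---they are indeed dense, but they are met trivially: the top condition $\langle\varnothing,\varnothing\rangle$ satisfies the first disjunct and belongs to every filter, so $G\cap F_{x,S,k+1,\epsilon}\neq\varnothing$ carries no information. Nor can you recover by passing to a common extension of a condition $p\in G$ carrying $\langle x,S,k+1\rangle$ and a condition $q\in G\cap F_{x,S,k+1,\epsilon}$: the first disjunct is not downward closed, so the common extension need not lie in $F_{x,S,k+1,\epsilon}$. Hence the inductive step ``the sets $F_{x,S,k+1,\epsilon}$ produce for every $\epsilon>0$ a point $y$\dots'' does not follow as written. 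The repair is standard: either index these dense sets by pairs $(p_0,\epsilon)$ where $p_0$ is a condition carrying $\langle x,S,k+1\rangle$, taking $\{q: q\perp p_0\}\cup\{q: \textrm{$q$ carries a suitable subordinate promise}\}$ (still fewer than $2^\omega$ sets, since the forcing has size $|\mathcal{S}|+\aleph_0$), or, as in Lemma \ref{discreteMainStep}, make the bookkeeping a finite partial \emph{function} on abstract slots $(S,n,k)$, so that all conditions in $G$ agree on which point fills a given slot and the subordinate-producing dense sets can be anchored to the slot rather than to a particular point $x$.
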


\begin{proof}
We apply $\MA$ to the partial order of finite normal subsets of $\mathbb{Q}$ ordered by $X_1\leq X_0$ if $X_0\subseteq X_1$. The point is to arrange the dense sets so that for each $k\in\omega$ and $S\in\mathcal{S}$ we eventually add a member of $L^k(S)$, and once we have added some $a\in L^{k+1}(S)$ we add for each rational $\epsilon>0$ a member of $L^k(S)\cap N_\epsilon(a)$. The density of the sets follows from the fact that each $S\not\in\mathcal{I}$ and the fact that if $X\subseteq\mathbb{Q}$ is finite and normal then $X\cup\{a\}$ is normal for all but finitely many $a\in\mathbb{Q}$. 
\end{proof}

Now fix $A$ as in Proposition \ref{CBnormy}. We want to build a polychromatic $B\subseteq A$ with each $B\cap S\not\in\mathcal{I}$. We will build $B$ by finite approximations, which we denote by $X$, and when we add a proposed limit point $b$ to $B$ we have to make sure that $b$ is a limit point not only of $E(X)$ but also of $E(X\cup b)$. Hence the following definition. 

\begin{defin}
Let $X\subseteq A$ be finite and polychromatic and let $S\in\mathcal{S}$. We define $L^k_\pol(X,S)$ by induction on $k\in\omega$.
\begin{enumerate}
\item $L^0_\pol(X,S)=E(X)\cap S$.
\item $L^{k+1}_\pol(X,S)$ is the set of $a\in E(X)\cap S$ which are limit points of $L^k_\pol(X\cup a,S)$.
\end{enumerate}
If there exists $k$ such that $L^k_\pol(X,S)=\varnothing$ we set $\CB_\pol(X,S)$ equal to the least such $k$. Otherwise we say $\CB_\pol(X,S)\geq\omega$.
\end{defin}

We prove the analogue of Proposition \ref{union}. Define $h:\omega^2\rightarrow\omega$ by recursion on the first coordinate. Take $h(0,n)=n$ and $h(k+1,n)=n+1+h(k,n+1)$.

\begin{prop} \label{limitUnion}
Suppose $n=|X|$ and $a_0,\ldots a_{h(k,n)}$ are distinct members of $E(X)\cap S$. Then $$L^k_\pol(X,S)\subseteq^*L^k_\pol(X\cup a_0,S)\cup\ldots\cup L^k_\pol(X\cup a_{h(k,n)},S).$$
\end{prop}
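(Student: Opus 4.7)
The plan is to proceed by induction on $k$. The base case $k=0$ follows directly from Proposition \ref{union}: since $L^0_\pol(X,S) = E(X)\cap S$ and $h(0,n)=n$, the $n+1$ given points $a_0,\ldots,a_n$ suffice after intersecting the conclusion of Proposition \ref{union} with $S$.

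For the inductive step, fix $b\in L^{k+1}_\pol(X,S)$. Since the $a_j$ are finitely many elements of $\mathbb{Q}$, modulo finitely many exceptional $b$ we may assume $b$ is $\prec$-above every $a_j$. The key step is to show that under this assumption at most $n$ of the $a_j$ fail to lie in $E(X\cup b)$; granting this, at least $h(k+1,n)+1-n = h(k,n+1)+2$ of the $a_j$ lie in $E(X\cup b)\cap S$, more than the $h(k,n+1)+1$ needed to apply the inductive hypothesis to $X\cup b$ (which has size $n+1$). That hypothesis yields $L^k_\pol(X\cup b,S)\subseteq^*\bigcup_j L^k_\pol(X\cup b\cup a_j,S)$, with the union taken over these good $a_j$. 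Since $b$ is a limit point of the left side and a limit point of a finite union must be a limit point of some summand, there exists a good $a_j$ with $b$ a limit point of $L^k_\pol(X\cup a_j\cup b,S)$. Combined with $b\in E(X\cup a_j)\cap S$ (automatic for good $a_j$), this yields $b\in L^{k+1}_\pol(X\cup a_j,S)$.

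The substantive work is the case analysis bounding the bad $a_j$. Because $X$, $X\cup a_j$, and $X\cup b$ are all polychromatic, any color collision in $X\cup\{a_j,b\}$ must involve both $a_j$ and $b$, giving four types: (i) $\chi(a_j,b)=\chi(x,y)$ with $\{x,y\}\in[X]^2$; (ii) $\chi(a_j,b)=\chi(a_j,x)$ for $x\in X$; (iii) $\chi(a_j,b)=\chi(b,x)$ for $x\in X$; and (iv) $\chi(a_j,x)=\chi(b,y)$ with $x,y\in X$. Normality of $A$ forces the $\prec$-tops of the colliding pairs to coincide. Since $a_j\prec b$, the top of $\{a_j,b\}$ is $b\notin X\cup\{a_j\}$, immediately ruling out types (i) and (ii). In type (iii), matching tops forces $x\prec b$; in type (iv) they force $y=x$ and $x\succ b$; in either case $2$-boundedness allows at most one $a_j$ per such $x$. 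The $x$'s arising in (iii) and (iv) partition $X$, so altogether there are at most $n$ bad $a_j$.

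I expect the main obstacle to be precisely type (ii): without the restriction $b\succ a_j$, type (ii) collisions are not bounded by normality and $2$-boundedness alone and would break the counting argument. The reduction to $b$ above all $a_j$, which costs only finitely many exceptions and is thus absorbed by $\subseteq^*$, is exactly what eliminates types (i) and (ii) and makes the case analysis go through.
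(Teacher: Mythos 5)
Your proof is correct and follows essentially the same route as the paper's: induction on $k$, with the successor step showing that all but roughly $n$ of the $a_j$ lie in $E(X\cup b)$, applying the inductive hypothesis to $X\cup b$ (of size $n+1$, which is exactly why $h(k+1,n)=n+1+h(k,n+1)$), and concluding via the fact that a limit point of a set almost contained in a finite union is a limit point of some summand. The only difference is cosmetic: the paper gets the bound on bad $a_j$ (it uses the slightly weaker bound $n+1$) by citing Proposition \ref{union} directly rather than redoing the normality/$2$-boundedness case analysis, and it leaves implicit the reduction to $b$ lying $\prec$-above all the $a_j$, which you rightly make explicit.
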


\begin{proof}
The base case $k=0$ is just Proposition \ref{union}.

For the successor case let $y\in L_\pol^{k+1}(X,S)$. By Proposition \ref{union} there are at most $n+1$ choices of $i$ with $y\not\in E(X\cup a_i)$ or equivalently $a_i\not\in E(X\cup y)$. Hence by relabeling we may assume that $a_i\in E(X\cup y)$ for $i\leq h(k,n+1)$. By definition $y$ is a limit point of $L^k_\pol(X\cup y,S)$ and by induction we have $$L^k_\pol(X\cup y,S)\subseteq^* L^k_\pol(X\cup y\cup a_0,S)\cup\ldots\cup L^k_\pol(X\cup y\cup a_{h(k,n+1)},S).$$ Thus there is $i$ such that $y$ is a limit point of $L^k_\pol(X\cup y\cup a_i,S)$. Since $y\in E(X\cup a_i)\cap S$ that gives $y\in L^{k+1}_\pol(X\cup a_i,S)$.
\end{proof}

\begin{prop} \label{startsRobust}
For each $S\in\mathcal{S}$ we have $\CB_\pol(\varnothing,S)\geq\omega$.
\end{prop}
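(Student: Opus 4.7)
My plan is to prove the proposition by induction on $k$, strengthening the claim to handle arbitrary polychromatic $X$: for every polychromatic $X \subseteq A$ with $A \cap E(X) \cap L^k(S) \not\in \mathcal{I}$, the set $A \cap L^k_\pol(X, S)$ also lies outside $\mathcal{I}$. Instantiating at $X = \varnothing$ yields the proposition, because a short induction on $k$ using the density property guaranteed by Proposition \ref{CBnormy} gives $A \cap L^k(S) = L^k(A \cap S)$ for every $k$, and $A \cap S \not\in \mathcal{I}$ then forces $A \cap L^k(S) \not\in \mathcal{I}$.

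The base case $k = 0$ is immediate because $L^0_\pol(X, S) = E(X) \cap S$. For the inductive step from $k$ to $k+1$, I first apply Lemma \ref{theideallemma} with the ideal $\mathcal{J}_k = \{B \subseteq \mathbb{Q} : B \cap L^k(S) \in \mathcal{I}\}$, which is easily checked to be an ideal. This yields that all but at most $|X|$ elements $a \in A \cap E(X)$ satisfy $A \cap E(X \cup a) \cap L^k(S) \not\in \mathcal{I}$; for each such $a$ the inductive hypothesis applied to $X \cup \{a\}$ delivers a rich set $A \cap L^k_\pol(X \cup a, S) \not\in \mathcal{I}$.

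The main obstacle is to show that, for all but a set of $a$'s in $\mathcal{I}$, the element $a$ is actually a limit point of $L^k_\pol(X \cup a, S)$ in $\mathbb{R}$ --- which is what is needed to conclude $a \in L^{k+1}_\pol(X, S)$. To handle this I would use two features of $A$. First, the density arrangement in Proposition \ref{CBnormy} ensures that every $a \in A \cap L^{k+1}(S)$ is a limit of $A \cap L^k(S)$ in $\mathbb{R}$. Second, normality of $A$ combined with $2$-boundedness of $\chi$ shows that for $b \in A$ that is $\prec$-larger than every element of $X \cup \{a\}$, any failure of $X \cup \{a, b\}$ to be polychromatic must arise from a color coincidence $\chi(y_1, b) = \chi(y_2, b)$ with distinct $y_1, y_2 \in X \cup \{a\}$, because normality forces the colliding pairs to share $b$ as their $\prec$-largest element. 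A counting argument combining $2$-boundedness with the Cantor--Bendixson ideal property of Proposition \ref{CBideal} and iterated applications of Lemma \ref{theideallemma} would then confine the exceptional ``bad'' $a$'s to a set in $\mathcal{I}$, giving $A \cap L^{k+1}_\pol(X, S) \not\in \mathcal{I}$ as required.
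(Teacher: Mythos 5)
Your proposal correctly isolates the crux --- showing that enough points $a$ are actually limit points of $L^k_\pol(X\cup a,S)$ --- but the machinery you propose for closing it does not suffice, and this is where the paper does something you have not reproduced. Your inductive hypothesis is global: ``$A\cap L^k_\pol(X,S)\not\in\mathcal{I}$'' only says that this set has infinite Cantor--Bendixson rank \emph{somewhere} in $\Q$. That gives no control near the particular point $a$: the set $L^k_\pol(X\cup a,S)$ could be rich far away from $a$ while missing an entire neighborhood of $a$, in which case $a\not\in L^{k+1}_\pol(X,S)$ even though every set in sight is outside $\mathcal{I}$. Knowing that $a$ is a limit of $A\cap L^k(S)$ (from the density arrangement in Proposition \ref{CBnormy}) does not bridge this, since the approximating points must lie in $L^k_\pol(X\cup a,S)$, a recursively defined and much thinner set. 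The paper's proof handles exactly this by proving a \emph{localized} statement $(*)$: for every open $U$, if $L^{v(k,n)+1}(E(X)\cap S)\cap U\not=\varnothing$ then $L^k_\pol(X,S)\cap U$ is infinite. Quantifying over $U$ is what converts ``high classical rank at a point'' into ``the polychromatic rank-$k$ set clusters at that point,'' and the function $v$ budgets how much classical rank must be spent per level of polychromatic rank.

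The second missing ingredient is Proposition \ref{limitUnion}, the rank-$k$ analogue of Proposition \ref{union}: $L^k_\pol(X,S)\subseteq^* L^k_\pol(X\cup a_0,S)\cup\ldots\cup L^k_\pol(X\cup a_{h(k,n)},S)$. Your counting argument leans on Lemma \ref{theideallemma}, but that lemma (via Proposition \ref{union}) only controls $E(X\cup a)=L^0_\pol(X\cup a,S)$; it says nothing about how the sets $L^k_\pol(X\cup a,S)$ for $k\geq 1$ cover $L^k_\pol(X,S)$, which is what the contradiction in the successor step actually requires (one shrinks neighborhoods $N_{\epsilon_i}(y_i)$ on which each $L^k_\pol(X\cup y_i,S)$ is essentially empty and then contradicts the covering). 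This covering lemma needs its own induction and is not a routine consequence of $2$-boundedness plus Proposition \ref{CBideal}. So while your outline points at the right difficulty, the argument as sketched has a genuine gap at the inductive step.
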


\begin{proof}
We must prove that $L^k_\pol(X,S)\not=\varnothing$ for each $k<\omega$. Define $v:\omega^2\rightarrow\omega$ by recursion on the first coordinate. Take $v(0,n)=1$ and $v(k+1,n)=v(k,n)+h(k,n)+2$.

To prove the proposition, we prove the following more general fact by induction on $k$. For $X\subseteq A$ with $|X|=n$ and $U$ an open subset of $\Q$,
$$(*)\textrm{ if }L^{v(k,n)+1}(E(X)\cap S)\cap U\not=\varnothing\textrm{ then }L^{k}_\pol(X,S)\cap U\textrm{ is infinite}.$$ Since $\CB(E(\varnothing)\cap S)\geq\omega$, this statement yields the proposition when used with $X=\varnothing$ and $U=\mathbb{Q}$.

The base case $k=0$ is trivial.

For the successor step $k+1$ suppose that $L^{k+1}_\pol(X,S)\cap U$ is finite, yet $L^{v(k+1,n)+1}(E(X)\cap S)\cap U$ is not empty. We define sequences $\{y_i:i\leq h(k,n)\}$, $\{\epsilon_i:i\leq h(k,n)\}$ by recursion so that
\begin{enumerate}
\item $N_{\epsilon_0}(y_0)\subseteq U$
\item $y_i\in L^{v(k+1,n)-i}(E(X)\cap S)$
\item $N_{\epsilon_i}(y_i)\cap L^k_\pol(X\cup y_i,S)\subseteq\{y_i\}$
\item $N_{\epsilon_{i+1}}(y_{i+1})\subseteq N_{\epsilon_i}(y_i)$.
\end{enumerate}

Start by fixing some $y$ a member of $L^{v(k+1,n)+1}(E(X)\cap S)\cap U$. By definition $y$ is a limit point of $L^{v(k+1,n)}(E(X)\cap S)$; since $y\in U$ there must be infinitely many members of $L^{v(k+1,n)}(E(X)\cap S)\cap U$; one of them does not belong to $L^{k+1}_\pol(X,S)$, take this to be $y_0$. Since $y_0\not\in L^{k+1}_\pol(X,S)$ we may select some small $\epsilon_0$ satisfying (1) and (3). The contruction of the rest of the sequence follows suit and we obtain $y_{i+1}$ from $y_i$ in a manner similar to how we obtained $y_0$ from $y$.

Now take $y=y_{h(k,n)}$ and $\epsilon=\epsilon_{h(k,n)}$. By (3) and (4) we have that $N_\epsilon(y)\cap L_\pol^k(X\cup y_i,S)$ is finite for $i\leq h(k,n)$. Also $y\in N_\epsilon(y)\cap L^{v(k,n)+1}(E(X)\cap S)$ so that by induction $N_\epsilon(y)\cap L^k_\pol(X,S)$ is infinite. And yet by Proposition \ref{limitUnion} we have $$N_\epsilon(y)\cap L^k_\pol(X,S)\subseteq^* N_\epsilon(y)\cap (L^k_\pol(X\cup y_0,S)\cup\ldots\cup L^k_\pol(X\cup y_{h(k,n)},S)).$$ This is a contradiction because the right hand side is supposedly finite.
\end{proof}

\begin{prop} \label{maintainRobust}
Suppose that $X\subseteq A$ with $|X|\leq n$ and $\CB_\pol(X,S)\geq\omega$ for each $S\in\mathcal{S}$. Then there are at most $n$ elements $a$ in $E(X)$ such that $\CB_\pol(X\cup a,S)<\omega$ for some $S\in\mathcal{S}$.
\end{prop}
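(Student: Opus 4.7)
The plan is to deduce the proposition from the already--established Lemma \ref{theideallemma} by first observing that the polychromatic rank $\CB_\pol$ and the classical Cantor--Bendixson rank $\CB$ can be compared precisely enough to manufacture the right ideal on $\omega$. The cornerstone is the equivalence
\[
\CB_\pol(X',S) < \omega \iff E(X') \cap S \in \mathcal{I}
\]
valid for every polychromatic $X' \subseteq A$ and every $S \in \mathcal{S}$. For the easier direction I would prove by induction on $k$ that $L^k_\pol(X',S) \subseteq L^k(E(X')\cap S)$; the induction uses the elementary monotonicity $L^{k-1}_\pol(X'\cup c,S) \subseteq L^{k-1}_\pol(X',S)$ (itself an easy induction on $k$). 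For the harder direction I would invoke the statement $(*)$ that was proven in the course of establishing Proposition \ref{startsRobust}: it applies to any polychromatic $X'$ of size $n$ and says that $L^k_\pol(X',S) = \varnothing$ forces $L^{v(k,n)+1}(E(X')\cap S) = \varnothing$, so that finiteness of $\CB_\pol(X',S)$ entails finiteness of $\CB(E(X')\cap S)$.

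With the equivalence in hand the proposition becomes ideal--theoretic. I would first replace $\mathcal{S}$ by $\{S \cap A : S \in \mathcal{S}\}$; this is still a filter base of size less than $2^\omega$ and is disjoint from $\mathcal{I}$ thanks to Proposition \ref{CBnormy}. Moreover any $B \subseteq A$ satisfies $B \cap S = B \cap (S \cap A)$, so working with this modified filter base is harmless. I may therefore assume $S \subseteq A$ for every $S \in \mathcal{S}$.

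Define
\[
I_\mathcal{S} = \{Y \subseteq \omega : Y \cap S \in \mathcal{I} \text{ for some } S \in \mathcal{S}\}.
\]
Closure under subsets is immediate, closure under finite unions uses the filter--base property of $\mathcal{S}$ to find a common refinement of the witnessing $S$'s together with the fact that $\mathcal{I}$ is itself an ideal (Proposition \ref{CBideal}), and $\omega \notin I_\mathcal{S}$ because $\mathcal{S}$ is disjoint from $\mathcal{I}$. Thus $I_\mathcal{S}$ is an ideal on $\omega$. The hypothesis $\CB_\pol(X,S) \geq \omega$ for every $S \in \mathcal{S}$ now translates, via the equivalence and the fact that $S \subseteq A$, into $E(X) \cap A \notin I_\mathcal{S}$. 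An application of Lemma \ref{theideallemma} with the ideal $I_\mathcal{S}$ and $|X| \leq n$ produces at most $n$ elements $a \in A \cap E(X)$ with $A \cap E(X\cup a) \in I_\mathcal{S}$; unwinding and reusing the equivalence these are exactly the elements with $\CB_\pol(X\cup a, S) < \omega$ for some $S \in \mathcal{S}$, which is the proposition.

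The principal obstacle is the careful extraction of the equivalence: the ``$\Rightarrow$'' half is not explicitly labeled as a lemma but is implicit in the inductive scheme $(*)$ of the proof of Proposition \ref{startsRobust}, and to invoke it cleanly I would either restate $(*)$ as a standalone lemma above or spell out the appeal within the proof. Once this is done, everything else — the reduction to $S \subseteq A$, the verification that $I_\mathcal{S}$ is an ideal, and the invocation of Lemma \ref{theideallemma} — is routine.
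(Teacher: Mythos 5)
Your argument is correct in substance, but it is a genuinely different proof from the paper's, and a notably simpler one. The paper argues directly by contradiction: from $a_0,\dots,a_n$ and $S=\bigcap S_i$ it gets $E(X)\cap S\subseteq^*\bigcup_i E(X\cup a_i)\cap S$ via Proposition \ref{union}, and then runs a delicate tree-surgery argument --- building trees that witness $L^k_\pol(X,S)\neq\varnothing$, extracting $2l+1$ levels lying in a single $E(X\cup a_i)$, and deleting levels one at a time using normality and $2$-boundedness to repair the requirement $x\in E(X\cup a_i\cup p_{T'}(x))$ --- to conclude that some $L^l_\pol(X\cup a_i,S)$ is nonempty for every $l$. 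You instead factor everything through the equivalence $\CB_\pol(X',S)<\omega\iff E(X')\cap S\in\mathcal{I}$ and then quote Lemma \ref{theideallemma} for the ideal $I_\mathcal{S}$. Both halves of the equivalence check out: the containment $L^k_\pol(X',S)\subseteq L^k(E(X')\cap S)$ does follow by induction from the monotonicity $L^k_\pol(X'\cup c,S)\subseteq L^k_\pol(X',S)$, and the converse is the contrapositive (with $U=\mathbb{Q}$) of the statement $(*)$ established inside the proof of Proposition \ref{startsRobust}, which is proved there for arbitrary finite polychromatic $X\subseteq A$ using only Proposition \ref{limitUnion}, so there is no circularity. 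What your route buys is the disappearance of the tree argument, and as a bonus Proposition \ref{startsRobust} itself becomes an immediate corollary of the same equivalence; the cost, as you note, is that $(*)$ must be promoted to a standalone lemma.

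The one step you should spell out more carefully is the relativization to $A$. Lemma \ref{theideallemma} and Proposition \ref{union} control only $A\cap E(X)$ and $A\cap E(X\cup a)$, so your argument needs the hypothesis $\CB_\pol(X,S)\geq\omega$ to yield $A\cap E(X)\cap S\notin\mathcal{I}$, not merely $E(X)\cap S\notin\mathcal{I}$. Your observation that $B\cap S=B\cap(S\cap A)$ for $B\subseteq A$ justifies transferring the \emph{conclusion} of Lemma \ref{discreteMainStep} back to the original filter base, but it does not by itself show that the \emph{hypothesis} $\CB_\pol(X,S)\geq\omega$ passes to $\CB_\pol(X,S\cap A)\geq\omega$, since $\CB_\pol$ is monotone in $S$ the wrong way for this. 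The clean fix is to adopt the convention $S\subseteq A$ for every $S\in\mathcal{S}$ from the moment $A$ is fixed (legitimate by Proposition \ref{CBnormy}, and sufficient for the intended application), so that the $L^k_\pol$ hierarchy lives inside $A$ and the equivalence reads directly as $A\cap E(X')\cap S\notin\mathcal{I}$. With that convention in place the argument is complete.
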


\begin{proof}
Suppose for contradiction that $a_i\in E(X)$ and $S_i\in\mathcal{S}$ for $i\leq n$ with $\CB_\pol(X\cup a_i,S_i)<\omega$. Set $S=\bigcap_{i<n}S_i$. Then $\CB_\pol(X,S)\geq\omega$ while $\CB_\pol(X\cup a_i,S)<\omega$ for each $i\leq n$. By Proposition \ref{union} we have $$E(X)\cap S\subseteq^* (E(X\cup a_0)\cup\ldots\cup E(X\cup a_n))\cap S.$$ We will use this obtain a contradiction by showing that for each $l\in\omega$ there is some $i\leq n$ so that $L_\poly^l(X\cup a_i,S)$ is not empty. So fix $l\in\omega$.

Let $\mathcal{T}$ be the collection of all trees $T$ with the following properties:
\begin{enumerate}
\item $T$ has finite height.
\item $x<_Ty$ in $T$ implies $x\prec y$.
\item $T\subseteq (E(X\cup a_0)\cup\ldots E(X\cup a_n))\cap S$.
\item Every $x\in T$ with $\hgt_T(x)<\hgt(T)$ has infinitely many successors. These can enumerated be as $\{y_n:n\in\omega\}$ where $\lim_{n<\omega}y_n=x$.
\item For each $k\leq\hgt(T)$ for some $i\leq n$ we have that all $x$ in $T$ with $\hgt(x)=k$ belongs to $E(X\cup a_i)$.
\item \label{colorpart} $x\in E(X\cup p_T(x))$ for each $x\in T$.
\end{enumerate}

If $x\in L_\pol^k(X,S)$ and does not belong to the finite set $F$ it can be shown by inudction on $k$ that there is a $T\in\mathcal{T}$ with root $x$ and $\hgt(T)=k$. Since $\CB_\pol(X,S)\geq\omega$ it follows that we may find $T\in\mathcal{T}$ with $\hgt(T)$ arbitrarily large.

Let $\mathcal{T}_i$ be all trees $T'\subseteq E(X\cup a_i)\cap S$ satisfying clauses 1,2, and 4 of the definition of $\mathcal{T}$ with the additional property that if $x\in T'$ then $x\in E(X\cup a_i\cup p_{T'}(x))$. If $x$ is the root of some $T'\in\mathcal{T}_i$ with $\hgt(T')=l$ then $x\in L^l_\pol(X\cup a_i,S)$, so we just need to find such a tree.

Let $T\in\mathcal{T}$ with $\hgt(T)> 2l(n+1)$. For some $i\leq n$ there are $2l+1$ levels of the tree with every member of that level belonging to $E(X\cup a_i)$. Let $T_0$ be the subtree of $T$ consisting of just those levels. Then $\hgt(T_0)=2l$ and $T_0$ satisfies all the requirements of the definition of membership in $\mathcal{T}_i$ except possibly one: while each $x\in T_0$ belongs to both $E(X\cup a_i)$ and $E(X\cup p_{T_0}(x))$ it may be that $x$ does not belong to $E(X\cup a_i\cup p_{T_0}(x))$.

We define by recursion a sequence of trees $\{T_k:k\leq l\}$ so that 
\begin{enumerate}
\item[(i)] $\hgt(T_{k+1})=\hgt(T_k)-1=2l-k$  
\item[(ii)] For $x\in T_k$ with $\hgt_{T_k}(x)\geq\hgt(T_k)-k$ we have $x\in E(X\cup a_i\cup p_{T_k}(x))$. 
\end{enumerate}

Given the sequence we may take $T'$ to be $T_l$. So let us describe the recursion. Say $T_k$ is given with $k<l$. For $j<\hgt_{T_k}(x)$ let $p_{T_k}(j,x)$ denote the predecessor $y$ of $x$ with $\hgt_T(y)=j$. If $x\in T_k$ and $x\not\in E(X\cup a_i\cup p_{T_k}(x))$ then by normality of $A$ and the fact that $x$ belongs to both $E(X\cup a_i)$ and $(E(X)\cup p_{T_0}(x))$ there is some $j<\hgt_{T_0}(x)$ with $\chi(p(j,x),x)=\chi(a,x)$. Since $\chi$ is $2$-bounded there is at most one such $j$. For each $x\in T_k$ with $\hgt_{T_k}(x)=\hgt(T_k)-k-1$ let $b(x)$ equal such a $j$ if it exists. We may thin $T_k$ so that $b(x)$ is the same $j$ for all such $x$; then form $T_{k+1}$ by removing level $j$ from $T_k$.
\end{proof}

\begin{prop} \label{maintainLimitPoint}
Suppose that $c\in A$, $X\subseteq A$ is finite and $S\in\mathcal{S}$. If $c$ is a limit point of $L^k_\pol(X,S)$ then for all but finitely many $a\in E(X)\cap S$ we have that $c$ is a limit point of $L^k_\pol(X\cup a,S)$.
\end{prop}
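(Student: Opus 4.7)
The plan is to argue by contradiction using Proposition \ref{limitUnion}. Suppose for contradiction that infinitely many $a \in E(X) \cap S$ are \emph{bad}, meaning $c$ is not a limit point of $L^k_\pol(X \cup a, S)$. For each such bad $a$, pick an open neighborhood $U_a$ of $c$ so that $U_a \cap L^k_\pol(X \cup a, S) \subseteq \{c\}$.

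Let $n = |X|$. Since there are infinitely many bad $a$'s, we may select $h(k,n)+1$ distinct bad elements $a_0, a_1, \ldots, a_{h(k,n)}$ in $E(X) \cap S$. Take $U = U_{a_0} \cap U_{a_1} \cap \ldots \cap U_{a_{h(k,n)}}$, which is still an open neighborhood of $c$. By Proposition \ref{limitUnion} applied to these $h(k,n)+1$ elements,
$$L^k_\pol(X,S) \subseteq^* L^k_\pol(X \cup a_0, S) \cup \ldots \cup L^k_\pol(X \cup a_{h(k,n)}, S).$$
Intersecting with $U$ and using the choice of each $U_{a_i}$, we obtain
$$U \cap L^k_\pol(X,S) \subseteq^* \bigcup_{i \leq h(k,n)} \left(U \cap L^k_\pol(X \cup a_i, S)\right) \subseteq \{c\}.$$
Hence $U \cap L^k_\pol(X,S)$ is finite. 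But $c$ is a limit point of $L^k_\pol(X,S)$, so every open neighborhood of $c$ meets $L^k_\pol(X,S)$ in an infinite set. This contradiction shows that at most $h(k,n)$ many $a \in E(X) \cap S$ can be bad, so all but finitely many $a$ are as required.

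I do not anticipate any real obstacle: the entire argument is a direct application of Proposition \ref{limitUnion} combined with the topological definition of limit point. The only point that warrants a moment's care is passing from the hypothesis "$c$ is not a limit point" to the existence of a neighborhood $U_a$ with $U_a \cap L^k_\pol(X \cup a, S) \subseteq \{c\}$ (as opposed to being empty), but this is exactly the standard way of negating "limit point" and causes no trouble since we only need to conclude that the resulting intersection is finite.
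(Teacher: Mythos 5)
Your proof is correct and is exactly the argument the paper intends: the paper's own proof consists of the single line ``This is immediate from Proposition \ref{limitUnion},'' and your writeup is the natural unpacking of that claim, even yielding the explicit bound of at most $h(k,n)$ bad elements.
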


\begin{proof}
This is immediate from Proposition \ref{limitUnion}.
\end{proof}

\begin{proof}[Proof of Lemma \ref{discreteMainStep}]
Take $\mathbb{P}$ to the notion of forcing consisting of conditions $\langle X,f\rangle$ where
\begin{enumerate}
\item $X\subseteq A$ is finite and polychromatic with $\CB_\pol(X,S)\geq\omega$ for each $S\in\mathcal{S}$.
\item $f$ is a finite partial function from $\mathcal{S}\times\omega\times\omega$ into $X$ so that if $f(S,n,k)=c$ then $c$ is a limit point of $L^k_\pol(X,S)$.
\end{enumerate}

(We could also prove this lemma using the same forcing notion but without the commitments $f$, but including them will help keep the argument organized.) We order $\mathbb{P}$ by inclusion: $\langle X',f'\rangle\leq\langle X,f\rangle$ if and only $X'\supseteq X$ and $f'\supseteq f$. That $\mathbb{P}$ is nonempty follows from Proposition \ref{startsRobust}. A simple $\Delta$-system argument establishes that $\mathbb{P}$ is ccc.

Let $D_{S,n,k}$ be the collection of conditions $\langle X,f\rangle$ with $(S,n,k)\in\dom(f)$. To check density let $\langle X,f\rangle\in\mathbb{P}$. The set $L^{k+1}_\pol(X,S)$ is infinite. Together Propositions \ref{maintainRobust} and \ref{maintainLimitPoint} imply that for all but finitely many $c\in L^{k+1}_\pol(X,S)$, the pair $\langle X\cup\{c\},f\cup\{\langle(S,n,k),c\rangle\}$ is a condition. It clearly belongs to $D_{S,n,k}$.

For rational $\epsilon>0$ let $E_{S,n,k,\epsilon}$ be the collection of conditions $\langle X,f\rangle$ with $f(S,n,k+1)=c$ where for some $m$ we have $f(S,m,k)=d$ and $d\in N_\epsilon(c)$. To check density let $\langle X,f\rangle\in\mathbb{P}$ and without loss of generality assume $\langle X,f\rangle\in D_{S,n,k+1}$ and $f(S,n,k+1)=c$. Then $N_\epsilon(c)\cap L^{k+1}_\pol(X,S)$ is infinite and together Propositions \ref{maintainRobust} and \ref{maintainLimitPoint} imply that for all but finitely many $d\in N_\epsilon(c)\cap L^{k+1}_\pol(X,S)$ the pair $\langle X\cup\{c\},f\cup\{\langle (S,m,k),d\rangle\}$ is a condition. It clearly belongs to $E_{S,n,k,\epsilon}$. 

Finally let $D'_{S,n,\epsilon}$ be the collection of conditions $\langle X,f\rangle$ with $f(S,n,0)=c$ where we have some $d\in X\cap N_\epsilon(c)$. Density of $D'_{S,n,\epsilon}$ can be checked similarly to the above.

Using $\MA$ let $G\subseteq\P$ be a filter intersecting the dense sets described above. Let $B=\bigcup\{X:\langle X,f\rangle\in G\}$ and let $F=\bigcup\{f:\langle X,f\rangle\in G\}$. Then $B$ is polychromatic and an induction on $k$ shows that each $F(S,n,k)$ belongs to $L^{k+1}(B)$.
\end{proof}

\section{Polychromatic Ramsey theory and cardinal characteristics of the continuum} \label{RRTnCC}

In this short final section we give a few well-known cardinal characteristics characterizations with the flavor of polychromatic Ramsey theory. The colorings we use here will be unary.

\begin{defin}
Let $\mathcal{F}\subseteq\omega^\omega$. We let $\Par(\mathcal{F})$ denote the least size of a family $\mathcal{G}\subseteq\mathcal{F}$ for which for every $X\in[\omega]^\omega$ there is $f\in\mathcal{G}$ so that $f$ is neither eventually constant nor eventually injective on $X$.
\begin{enumerate}
\item $\Par_{1c}=\Par(\omega^\omega)$.
\item $\Par_c=\Par(2^\omega)$
\item $\Par_1=\Par(\mathcal{F})$ where $\mathcal{F}$ consists on all finite-to-one functions.
\item $\Par_\bdd=\Par(\mathcal{F})$ where $\mathcal{F}$ consists of all $f$ with each $|f^{-1}(n)|\leq 2$.
\end{enumerate}
\end{defin}

Our notation is consistent with that of Blass \cite{Bl3} who introduced $\Par_{1c}$. The cardinal $\Par_c$ is just the splitting number $\mathfrak{s}$. Let us note that Galvin's trick applied to the unary $2$-bounded colorings corresponding to $\Par_\bdd$ yields the inequality $\Par_c\leq \Par_\bdd$. 

We also introduce notation for the dual characteristics.

\begin{defn}
Let $\mathcal{F}\subseteq\omega^\omega$. We let $\Hom(\mathcal{F})$ denote the least size of a $\mathcal{X}\subseteq[\omega]^\omega$ so that for every $f\in\mathcal{F}$ there is some $X\in\mathcal{X}$ so that $f$ is either eventually constant or eventually injective on $X$.
\begin{enumerate}
\item $\Hom_{1c}=\Hom(\omega^\omega)$.
\item $\Hom_c=\Hom(2^\omega)$
\item $\Hom_1=\Hom(\mathcal{F})$ where $\mathcal{F}$ consists on all finite-to-one functions.
\item $\Hom_\bdd=\Hom(\mathcal{F})$ where $\mathcal{F}$ consists of all $f$ with each $|f^{-1}(n)|\leq 2$.
\end{enumerate}
\end{defn}

\begin{prop}
$\Par_1=\mathfrak{b}$, and dually $\Hom_1=\mathfrak{d}$.
\end{prop}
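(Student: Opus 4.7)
The plan is to set up two constructions relating finite-to-one functions and strictly increasing functions. To a strictly increasing $b : \omega \to \omega$ with $b(0) = 0$ I attach the finite-to-one function $f_b$ defined by $f_b(m) = k$ exactly when $m \in [b(k), b(k+1))$. To a finite-to-one $f$ I attach $F_f(n) = \max\{m : f(m) \in f(\{0,\ldots,n\})\}$, which is well-defined by finite-to-oneness. The usefulness of $F_f$ is that $x' > F_f(x)$ guarantees $f(x')$ is a value not seen on $\{0,\ldots,x\}$, so iterating this inequality produces sets on which $f$ is eventually injective.

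The key technical lemma I will establish is: if $f_b$ is eventually injective on an infinite $X$ with increasing enumeration $e_X$, then there exists a shift $C$ with $b(K) \leq e_X(K+C)$ for cofinitely many $K$. This is a one-line calculation, since once each $[b(k), b(k+1))$ carries at most one point of $X$, the number of $X$-points below $b(K)$ grows by at most one per increment of $K$. The contrapositive form I will use is: if $b \not\leq^* (n \mapsto e_X(2n))$, then $f_b$ is not eventually injective on $X$ (absorbing the unknown shift via $e_X(2n) \geq e_X(n + C)$ for $n \geq C$).

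For $\Par_1 \leq \mathfrak{b}$, I will start with an unbounded $\mathcal{B}$ of strictly increasing functions of size $\mathfrak{b}$ and set $\mathcal{G} = \{f_b : b \in \mathcal{B}\}$; given any infinite $X$, unboundedness supplies some $b$ with $b \not\leq^* (n \mapsto e_X(2n))$, and the lemma makes $f_b$ not eventually injective on $X$, with finite-to-oneness ruling out eventual constancy. For $\Par_1 \geq \mathfrak{b}$, given $\mathcal{G}$ witnessing $\Par_1$ I will show $\{F_f : f \in \mathcal{G}\}$ is unbounded: otherwise any $h$ dominating all $F_f$'s yields $X = \{x_n\}$ via $x_0 = 0$ and $x_{n+1} = h(x_n) + 1$, on which every $f \in \mathcal{G}$ is eventually injective, contradicting the defining property of $\mathcal{G}$.

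The argument for $\Hom_1 = \mathfrak{d}$ is dual. For $\Hom_1 \leq \mathfrak{d}$, given a dominating $\mathcal{D}$ of size $\mathfrak{d}$ I set $X_d = \{x_n\}$ with $x_{n+1} = d(x_n) + 1$; choosing $d \in \mathcal{D}$ dominating $F_f$ makes $f$ eventually injective on $X_d$, so $\{X_d : d \in \mathcal{D}\}$ witnesses $\Hom_1$. For $\Hom_1 \geq \mathfrak{d}$, given $\mathcal{X}$ witnessing $\Hom_1$ I will show $\{n \mapsto e_X(2n) : X \in \mathcal{X}\}$ is dominating: for any strictly increasing $g$ (WLOG $g(0) = 0$), the witness property applied to $f_g$ yields $X \in \mathcal{X}$ on which $f_g$ is eventually injective, and the lemma with $b = g$ then gives $g(n) \leq^* e_X(n+C) \leq^* e_X(2n)$. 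The one point requiring care throughout is the unknown shift constant $C$ coming from the lemma, which is exactly what the $n \mapsto 2n$ reparametrization is there to absorb.
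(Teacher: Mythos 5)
Your proof is correct and follows essentially the same route as the paper: your $f_b$ is the paper's interval-collapsing finite-to-one function $g_f$, your $F_f$ is the paper's injectivity-threshold function $h_g$, and your iterated set $X_d$ is the paper's $X_f=\{f^n(0):n\in\omega\}$. The only cosmetic difference is that you absorb the shift constant by reparametrizing to $n\mapsto e_X(2n)$ where the paper instead doubles the interval indices $[f(2n),f(2n+2))$, and you write out all four inequalities rather than appealing to duality.
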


\begin{proof}
Given $f\in\omega^\omega$ strictly increasing let $g$ be some finite to one function which is constant on each interval $[f(2n),f(2n+2))$. Then if $X\in[\omega]^\omega$ and $g_f$ is injective on a cofinite subset of $X$ it follows that $f\leq^*e_X$. This shows $\Par_1\leq\mathfrak{b}$. The dual argument shows that $\mathfrak{d}\leq\Hom_1$.

For each strictly increasing $f\in\omega^\omega$ let $X_f\in[\omega]^\omega$ be the set $\{f^n(0):n\in\omega\}$. For each finite-to-one function $g\in\omega^\omega$ let $h_g\in\omega^\omega$ be such that if $l\geq h_g(n)$ then $g(l)\not\in\{g(0),\ldots g(n)\}$. Suppose $h_g\leq^* f$ and take $N$ for which $f(n)\geq h_g(n)$ for all $n\geq N$. Then $g$ is injective on $X_f\setminus N$. This shows $\Hom_1\leq\mathfrak{d}$. The dual argument shows that $\mathfrak{d}\leq\Par_1$.
\end{proof}

\begin{thm}
$\Par_\bdd=\textbf{non}(\mathcal{M})$, and dually $\Hom_\bdd=\textbf{cov}(\mathcal{M})$.
\end{thm}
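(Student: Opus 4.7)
The plan is to prove both equalities using Bartoszyński's characterizations: $\textbf{non}(\mathcal{M})$ equals the least cardinality of a family $F \subseteq \omega^\omega$ such that every $g \in \omega^\omega$ agrees infinitely often with some $f \in F$, while $\textbf{cov}(\mathcal{M})$ equals the least cardinality of $F \subseteq \omega^\omega$ such that every $g$ is eventually different from some $f \in F$. The two desired equalities then reduce to transferring witnessing families back and forth via an explicit encoding between 2-bounded colorings (equivalently, matchings on $\omega$) and functions, and between infinite subsets of $\omega$ and functions.

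The encoding proceeds by fixing an interval partition $\omega = \bigsqcup_n I_n$ with $|I_n|$ growing (say $|I_n| = n+2$), together with bijections $\rho_n$ between the pairs of $I_n$ and an initial segment of $\omega$. A function $f \in \omega^\omega$ with appropriately bounded values then encodes a matching $M_f$ picking the pair $\rho_n(f(n))$ within each $I_n$, and an infinite set $X$ satisfying $|X \cap I_n| \geq 2$ for all sufficiently large $n$ encodes a function $g_X$ whose value at $n$ is the $\rho_n$-index of some pair contained in $X \cap I_n$. The key observation is that $M_f$ has infinitely many edges inside $X$ exactly when $f(n) = g_X(n)$ for infinitely many $n$; this translates the defining property of $\Par_\bdd$ (respectively $\Hom_\bdd$) directly into the Bartoszyński form of $\textbf{non}(\mathcal{M})$ (respectively $\textbf{cov}(\mathcal{M})$). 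For the reverse direction, given a prescribed $g \in \omega^\omega$ one builds $X_g$ by taking the pair $\rho_n(g(n))$ from each $I_n$ and verifies the same correspondence.

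The principal obstacle is handling an infinite $X$ that meets each $I_n$ in at most one element, since such an $X$ contains no intra-block pair and is invisible to the matchings $M_f$ above. I would cover such sparse sets by supplementing the family with a parallel inter-block family of matchings $M'_h$ indexed by $h \in \omega^\omega$: match the index-$h(2n)$ element of $I_{2n}$ with the index-$h(2n+1)$ element of $I_{2n+1}$, using a pairing function to reduce the two-coordinate agreement condition to the one-coordinate Bartoszyński condition. Verifying that the intra-block and inter-block families together witness $\Par_\bdd$ (and dually that their analogues for infinite sets witness $\Hom_\bdd$) is the technical heart of the proof; since each piece of the supplemental family is indexed by $\omega^\omega$, the total union has cardinality $\textbf{non}(\mathcal{M})$ (respectively $\textbf{cov}(\mathcal{M})$), giving both desired equalities.
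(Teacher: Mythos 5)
Your reduction to Bartoszy\'{n}ski's infinitely-equal/eventually-different characterizations is a reasonable starting point, but the block-coding scheme has a genuine gap exactly at the point you identify as the technical heart, and the patch you propose does not close it. For $\Par_\bdd\le\textbf{non}(\mathcal{M})$ the problem is the sparse sets $X$ with $|X\cap I_n|\le 1$ for all $n$: your supplementary inter-block matchings $M'_h$ only join $I_{2n}$ to $I_{2n+1}$, so an infinite $X\subseteq\bigcup_n I_{2n}$ with one point per even block and nothing in the odd blocks contains no edge of any $M_f$ or any $M'_h$ whatsoever. More generally, which pairs of blocks must be joined depends on the trace of $X$ on the block structure, which is itself an arbitrary infinite subset of $\omega$; no fixed pairing of blocks suffices, and catching all sparse sets is essentially the original problem over again. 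There are secondary issues as well: the claimed ``exactly when'' equivalence fails once $|X\cap I_n|\ge 3$ (harmless for the inequality you need, but not an equivalence), and in the dual direction a two-to-one $g$ can have up to $|I_n|/2$ partner-pairs inside $I_n$ and can also pair elements across distinct blocks, so eventual difference from the single function $g_M$ does not make $X_f$ almost miss the matching; you would need at least the slalom form of $\textbf{cov}(\mathcal{M})$ plus a separate treatment of cross-block edges.

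For comparison, the paper sidesteps the coding entirely on the $\textbf{non}(\mathcal{M})$ side: the two-to-one functions form a Polish subspace of Baire space, and for each fixed infinite $X$ the set of two-to-one $f$ that are eventually injective on $X$ is meager there, so any nonmeager set of two-to-one functions witnesses $\Par_\bdd$, giving $\Par_\bdd\le\textbf{non}(\mathcal{M})$ (and dually $\textbf{cov}(\mathcal{M})\le\Hom_\bdd$). For $\Hom_\bdd\le\textbf{cov}(\mathcal{M})$ it uses Bartoszy\'{n}ski's slalom characterization together with the very sparse sets $A_f=\{f^n(0):n\in\omega\}$ for strictly increasing $f$, associating to each two-to-one $g$ the slalom whose $n$th entry collects the $g$-partners of $0,\dots,n$; eventual avoidance of this slalom forces $g$ to be injective on a cofinite subset of $A_f$. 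If you want a purely combinatorial route on the $\textbf{non}(\mathcal{M})$ side you would need a characterization robust under passing to arbitrary infinite sets of coordinates (such as Blass's matching of chopped reals), not a fixed interval partition.
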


\begin{proof}
First we prove $\Par_\bdd\leq\textbf{non}(\mathcal{M})$. The collection $\mathcal{F}\subseteq\omega^\omega$ of two-to-one functions is closed as a subset of Baire space and thus may be regarded as a Polish space in its own right. Thus if $\textbf{non}(\mathcal{M})<\Par_\bdd$ there exists some nonmeager $A\subseteq\mathcal{F}$ with cardinality strictly less than $\Par_\bdd$. By definition of $\Par_\bdd$ there exists an infinite $X\subseteq\omega$ for which $$A\subseteq\{f\in\mathcal{F}:(\exists N)(\forall n,m\in X)n,m\geq N\rightarrow f(n)\not=f(m)\}.$$ But this latter set is meager, contradiction.

The dual inequality, $\textbf{cov}(\mathcal{M})\leq\Hom_\bdd$, can be obtained using a dual argument. Alternatively one may notice that by its definition $\Hom_\bdd$ is a $\bSigma^0_2$ characteristic and apply Proposition 3 and Theorem 5 of \cite{Bl3}.
 
Next we prove $\Hom_\bdd\leq\textbf{cov}(\mathcal{M})$. We use Bartoszy\'{n}ski's characterization of $\textbf{cov}(\mathcal{M})$ in terms of slaloms. A slalom is a function $\phi$ with domain $\omega$ so that each $\phi(n)\subseteq\omega$ with $|\phi(n)|\leq n$. Let $\mathcal{C}$ denote the set of slaloms. Then $\textbf{cov}(\mathcal{M})$ is the least size of a family $\mathcal{F}\subseteq\omega^\omega$ such that for every $\phi\in\mathcal{C}$ there is some $f\in\mathcal{F}$ so that for all but finitely many $n$ we have $f(n)\not\in\phi(n)$. For a proof of this characterization, see Lemma 2.4.2 in \cite{BJ}. Let us point out that for the purposes of this characterization the requirement that $|\phi(n)|\leq n$ is unnecessary; instead of the identity function we may use any function $h\in\omega^\omega$ with values going to infinity and require $|\phi(n)|\leq h(n)$ instead.

We start by massaging Bartoszy\'{n}ski's characterization slightly, and show that we may take $\mathcal{F}$ to consist of strictly increasing functions.
For each $f\in\omega^\omega$ we associate a strictly increasing $g_f\in\omega^\omega$ as follows. If $f$ is finite-to-one, we fix some $X=\{x_n:n\in\omega\}$ (enumerated in increasing order) on which $f$ is strictly increasing and set $g_f(n)=f(x_n)$. Otherwise we take $g_f$ to be the identify function (or something equally arbitrary). We claim that if $\mathcal{F}\subseteq\omega^\omega$ is such that $(\forall \phi\in\mathcal{C})(\exists f\in\mathcal{F})(\forall^\infty n)f(n)\not\in\phi(n)$ then the family $\{g_f:f\in\mathcal{F}\}$ has the same property (with respect to a class of slaloms with a larger bound).

Given $\phi\in\mathcal{C}$ associate the function $\psi_\phi$ defined by $$\psi_\phi(n)=\phi(0)\cup\ldots\phi(n)\cup\{0,\ldots n\}.$$ It is enough to show that if $(\forall^\infty n) f(n)\not\in\psi_\phi(n)$ then $(\forall^\infty n) g_f(n)\not\in\phi(n)$. If $(\forall^\infty n)f(n)\not\in\psi_\phi(n)$ $f$ certainly can only take each value finitely often. Further for sufficiently large $n$ we have $g_f(n)=f(x_n)\not\in\psi_\phi(n)$. Then for such $n$, $$g_f(n)\not\in \phi(0)\cup\ldots\cup\phi(x_n).$$ Since $n\leq x_n$ we have $g_f(n)\not\in\phi(n)$, as desired.

We now use the massaged characterization to finish the theorem. Given strictly increasing $f\in\omega^\omega$ associate $A_f\in[\omega]^\omega$ given by $A_f=\{f^n(0):n\in\omega\}$. To each two-to-one $g\in\omega^\omega$ we associate a slalom $\phi_g$ defined as follows. First define $h_g\in\omega^\omega$ by setting $h_g(n)=m$ where $m\not=n$ is the unique $m$ such that $g(n)=g(m)$ if such $m$ exists, and set $h_g(n)=n$ if there is no such $m$. Then define $\phi_g$ by $$\phi_g(n)=\{h_g(0),\ldots h_g(n)\}.$$

We verify that if $(\forall^\infty n)f(n)\not\in\phi_g(n)$ then $g$ is injective on a cofinite subset of $A_f$. Say $N$ is such that $f(n)\not\in\phi_g(n)$ for $n\geq N$. Then $g$ is injective on $A_f\setminus N$. Otherwise for some $k,m\in\omega$ with $f^k(0)\geq N$ we would have $g(f^k(0))=g(f^{k+m+1}(0))$. Then $h_g(f^k(0))=f^{k+m+1}(0)$. Because $f$ is increasing we have $h_g(f^k(0))\in\phi_g(f^{k+m}(0))$. Thus $f(f^{k+m}(0))\in\phi_g(f^{k+m}(0))$, contradicting $f^{k+m}(0)\geq N$.
\end{proof}

\bibliographystyle{alpha}
\bibliography{RainbowRamsey}










\end{document}